\journal{{\tt arXiv.org}}
\pgfplotsset{compat=1.12}
\pgfplotsset{
	colormap={cmviridis}{
rgb(0pt)=(0.267004,0.004874,0.329415);
rgb(1pt)=(0.268510,0.009605,0.335427);
rgb(2pt)=(0.269944,0.014625,0.341379);
rgb(3pt)=(0.271305,0.019942,0.347269);
rgb(4pt)=(0.272594,0.025563,0.353093);
rgb(5pt)=(0.273809,0.031497,0.358853);
rgb(6pt)=(0.274952,0.037752,0.364543);
rgb(7pt)=(0.276022,0.044167,0.370164);
rgb(8pt)=(0.277018,0.050344,0.375715);
rgb(9pt)=(0.277941,0.056324,0.381191);
rgb(10pt)=(0.278791,0.062145,0.386592);
rgb(11pt)=(0.279566,0.067836,0.391917);
rgb(12pt)=(0.280267,0.073417,0.397163);
rgb(13pt)=(0.280894,0.078907,0.402329);
rgb(14pt)=(0.281446,0.084320,0.407414);
rgb(15pt)=(0.281924,0.089666,0.412415);
rgb(16pt)=(0.282327,0.094955,0.417331);
rgb(17pt)=(0.282656,0.100196,0.422160);
rgb(18pt)=(0.282910,0.105393,0.426902);
rgb(19pt)=(0.283091,0.110553,0.431554);
rgb(20pt)=(0.283197,0.115680,0.436115);
rgb(21pt)=(0.283229,0.120777,0.440584);
rgb(22pt)=(0.283187,0.125848,0.444960);
rgb(23pt)=(0.283072,0.130895,0.449241);
rgb(24pt)=(0.282884,0.135920,0.453427);
rgb(25pt)=(0.282623,0.140926,0.457517);
rgb(26pt)=(0.282290,0.145912,0.461510);
rgb(27pt)=(0.281887,0.150881,0.465405);
rgb(28pt)=(0.281412,0.155834,0.469201);
rgb(29pt)=(0.280868,0.160771,0.472899);
rgb(30pt)=(0.280255,0.165693,0.476498);
rgb(31pt)=(0.279574,0.170599,0.479997);
rgb(32pt)=(0.278826,0.175490,0.483397);
rgb(33pt)=(0.278012,0.180367,0.486697);
rgb(34pt)=(0.277134,0.185228,0.489898);
rgb(35pt)=(0.276194,0.190074,0.493001);
rgb(36pt)=(0.275191,0.194905,0.496005);
rgb(37pt)=(0.274128,0.199721,0.498911);
rgb(38pt)=(0.273006,0.204520,0.501721);
rgb(39pt)=(0.271828,0.209303,0.504434);
rgb(40pt)=(0.270595,0.214069,0.507052);
rgb(41pt)=(0.269308,0.218818,0.509577);
rgb(42pt)=(0.267968,0.223549,0.512008);
rgb(43pt)=(0.266580,0.228262,0.514349);
rgb(44pt)=(0.265145,0.232956,0.516599);
rgb(45pt)=(0.263663,0.237631,0.518762);
rgb(46pt)=(0.262138,0.242286,0.520837);
rgb(47pt)=(0.260571,0.246922,0.522828);
rgb(48pt)=(0.258965,0.251537,0.524736);
rgb(49pt)=(0.257322,0.256130,0.526563);
rgb(50pt)=(0.255645,0.260703,0.528312);
rgb(51pt)=(0.253935,0.265254,0.529983);
rgb(52pt)=(0.252194,0.269783,0.531579);
rgb(53pt)=(0.250425,0.274290,0.533103);
rgb(54pt)=(0.248629,0.278775,0.534556);
rgb(55pt)=(0.246811,0.283237,0.535941);
rgb(56pt)=(0.244972,0.287675,0.537260);
rgb(57pt)=(0.243113,0.292092,0.538516);
rgb(58pt)=(0.241237,0.296485,0.539709);
rgb(59pt)=(0.239346,0.300855,0.540844);
rgb(60pt)=(0.237441,0.305202,0.541921);
rgb(61pt)=(0.235526,0.309527,0.542944);
rgb(62pt)=(0.233603,0.313828,0.543914);
rgb(63pt)=(0.231674,0.318106,0.544834);
rgb(64pt)=(0.229739,0.322361,0.545706);
rgb(65pt)=(0.227802,0.326594,0.546532);
rgb(66pt)=(0.225863,0.330805,0.547314);
rgb(67pt)=(0.223925,0.334994,0.548053);
rgb(68pt)=(0.221989,0.339161,0.548752);
rgb(69pt)=(0.220057,0.343307,0.549413);
rgb(70pt)=(0.218130,0.347432,0.550038);
rgb(71pt)=(0.216210,0.351535,0.550627);
rgb(72pt)=(0.214298,0.355619,0.551184);
rgb(73pt)=(0.212395,0.359683,0.551710);
rgb(74pt)=(0.210503,0.363727,0.552206);
rgb(75pt)=(0.208623,0.367752,0.552675);
rgb(76pt)=(0.206756,0.371758,0.553117);
rgb(77pt)=(0.204903,0.375746,0.553533);
rgb(78pt)=(0.203063,0.379716,0.553925);
rgb(79pt)=(0.201239,0.383670,0.554294);
rgb(80pt)=(0.199430,0.387607,0.554642);
rgb(81pt)=(0.197636,0.391528,0.554969);
rgb(82pt)=(0.195860,0.395433,0.555276);
rgb(83pt)=(0.194100,0.399323,0.555565);
rgb(84pt)=(0.192357,0.403199,0.555836);
rgb(85pt)=(0.190631,0.407061,0.556089);
rgb(86pt)=(0.188923,0.410910,0.556326);
rgb(87pt)=(0.187231,0.414746,0.556547);
rgb(88pt)=(0.185556,0.418570,0.556753);
rgb(89pt)=(0.183898,0.422383,0.556944);
rgb(90pt)=(0.182256,0.426184,0.557120);
rgb(91pt)=(0.180629,0.429975,0.557282);
rgb(92pt)=(0.179019,0.433756,0.557430);
rgb(93pt)=(0.177423,0.437527,0.557565);
rgb(94pt)=(0.175841,0.441290,0.557685);
rgb(95pt)=(0.174274,0.445044,0.557792);
rgb(96pt)=(0.172719,0.448791,0.557885);
rgb(97pt)=(0.171176,0.452530,0.557965);
rgb(98pt)=(0.169646,0.456262,0.558030);
rgb(99pt)=(0.168126,0.459988,0.558082);
rgb(100pt)=(0.166617,0.463708,0.558119);
rgb(101pt)=(0.165117,0.467423,0.558141);
rgb(102pt)=(0.163625,0.471133,0.558148);
rgb(103pt)=(0.162142,0.474838,0.558140);
rgb(104pt)=(0.160665,0.478540,0.558115);
rgb(105pt)=(0.159194,0.482237,0.558073);
rgb(106pt)=(0.157729,0.485932,0.558013);
rgb(107pt)=(0.156270,0.489624,0.557936);
rgb(108pt)=(0.154815,0.493313,0.557840);
rgb(109pt)=(0.153364,0.497000,0.557724);
rgb(110pt)=(0.151918,0.500685,0.557587);
rgb(111pt)=(0.150476,0.504369,0.557430);
rgb(112pt)=(0.149039,0.508051,0.557250);
rgb(113pt)=(0.147607,0.511733,0.557049);
rgb(114pt)=(0.146180,0.515413,0.556823);
rgb(115pt)=(0.144759,0.519093,0.556572);
rgb(116pt)=(0.143343,0.522773,0.556295);
rgb(117pt)=(0.141935,0.526453,0.555991);
rgb(118pt)=(0.140536,0.530132,0.555659);
rgb(119pt)=(0.139147,0.533812,0.555298);
rgb(120pt)=(0.137770,0.537492,0.554906);
rgb(121pt)=(0.136408,0.541173,0.554483);
rgb(122pt)=(0.135066,0.544853,0.554029);
rgb(123pt)=(0.133743,0.548535,0.553541);
rgb(124pt)=(0.132444,0.552216,0.553018);
rgb(125pt)=(0.131172,0.555899,0.552459);
rgb(126pt)=(0.129933,0.559582,0.551864);
rgb(127pt)=(0.128729,0.563265,0.551229);
rgb(128pt)=(0.127568,0.566949,0.550556);
rgb(129pt)=(0.126453,0.570633,0.549841);
rgb(130pt)=(0.125394,0.574318,0.549086);
rgb(131pt)=(0.124395,0.578002,0.548287);
rgb(132pt)=(0.123463,0.581687,0.547445);
rgb(133pt)=(0.122606,0.585371,0.546557);
rgb(134pt)=(0.121831,0.589055,0.545623);
rgb(135pt)=(0.121148,0.592739,0.544641);
rgb(136pt)=(0.120565,0.596422,0.543611);
rgb(137pt)=(0.120092,0.600104,0.542530);
rgb(138pt)=(0.119738,0.603785,0.541400);
rgb(139pt)=(0.119512,0.607464,0.540218);
rgb(140pt)=(0.119423,0.611141,0.538982);
rgb(141pt)=(0.119483,0.614817,0.537692);
rgb(142pt)=(0.119699,0.618490,0.536347);
rgb(143pt)=(0.120081,0.622161,0.534946);
rgb(144pt)=(0.120638,0.625828,0.533488);
rgb(145pt)=(0.121380,0.629492,0.531973);
rgb(146pt)=(0.122312,0.633153,0.530398);
rgb(147pt)=(0.123444,0.636809,0.528763);
rgb(148pt)=(0.124780,0.640461,0.527068);
rgb(149pt)=(0.126326,0.644107,0.525311);
rgb(150pt)=(0.128087,0.647749,0.523491);
rgb(151pt)=(0.130067,0.651384,0.521608);
rgb(152pt)=(0.132268,0.655014,0.519661);
rgb(153pt)=(0.134692,0.658636,0.517649);
rgb(154pt)=(0.137339,0.662252,0.515571);
rgb(155pt)=(0.140210,0.665859,0.513427);
rgb(156pt)=(0.143303,0.669459,0.511215);
rgb(157pt)=(0.146616,0.673050,0.508936);
rgb(158pt)=(0.150148,0.676631,0.506589);
rgb(159pt)=(0.153894,0.680203,0.504172);
rgb(160pt)=(0.157851,0.683765,0.501686);
rgb(161pt)=(0.162016,0.687316,0.499129);
rgb(162pt)=(0.166383,0.690856,0.496502);
rgb(163pt)=(0.170948,0.694384,0.493803);
rgb(164pt)=(0.175707,0.697900,0.491033);
rgb(165pt)=(0.180653,0.701402,0.488189);
rgb(166pt)=(0.185783,0.704891,0.485273);
rgb(167pt)=(0.191090,0.708366,0.482284);
rgb(168pt)=(0.196571,0.711827,0.479221);
rgb(169pt)=(0.202219,0.715272,0.476084);
rgb(170pt)=(0.208030,0.718701,0.472873);
rgb(171pt)=(0.214000,0.722114,0.469588);
rgb(172pt)=(0.220124,0.725509,0.466226);
rgb(173pt)=(0.226397,0.728888,0.462789);
rgb(174pt)=(0.232815,0.732247,0.459277);
rgb(175pt)=(0.239374,0.735588,0.455688);
rgb(176pt)=(0.246070,0.738910,0.452024);
rgb(177pt)=(0.252899,0.742211,0.448284);
rgb(178pt)=(0.259857,0.745492,0.444467);
rgb(179pt)=(0.266941,0.748751,0.440573);
rgb(180pt)=(0.274149,0.751988,0.436601);
rgb(181pt)=(0.281477,0.755203,0.432552);
rgb(182pt)=(0.288921,0.758394,0.428426);
rgb(183pt)=(0.296479,0.761561,0.424223);
rgb(184pt)=(0.304148,0.764704,0.419943);
rgb(185pt)=(0.311925,0.767822,0.415586);
rgb(186pt)=(0.319809,0.770914,0.411152);
rgb(187pt)=(0.327796,0.773980,0.406640);
rgb(188pt)=(0.335885,0.777018,0.402049);
rgb(189pt)=(0.344074,0.780029,0.397381);
rgb(190pt)=(0.352360,0.783011,0.392636);
rgb(191pt)=(0.360741,0.785964,0.387814);
rgb(192pt)=(0.369214,0.788888,0.382914);
rgb(193pt)=(0.377779,0.791781,0.377939);
rgb(194pt)=(0.386433,0.794644,0.372886);
rgb(195pt)=(0.395174,0.797475,0.367757);
rgb(196pt)=(0.404001,0.800275,0.362552);
rgb(197pt)=(0.412913,0.803041,0.357269);
rgb(198pt)=(0.421908,0.805774,0.351910);
rgb(199pt)=(0.430983,0.808473,0.346476);
rgb(200pt)=(0.440137,0.811138,0.340967);
rgb(201pt)=(0.449368,0.813768,0.335384);
rgb(202pt)=(0.458674,0.816363,0.329727);
rgb(203pt)=(0.468053,0.818921,0.323998);
rgb(204pt)=(0.477504,0.821444,0.318195);
rgb(205pt)=(0.487026,0.823929,0.312321);
rgb(206pt)=(0.496615,0.826376,0.306377);
rgb(207pt)=(0.506271,0.828786,0.300362);
rgb(208pt)=(0.515992,0.831158,0.294279);
rgb(209pt)=(0.525776,0.833491,0.288127);
rgb(210pt)=(0.535621,0.835785,0.281908);
rgb(211pt)=(0.545524,0.838039,0.275626);
rgb(212pt)=(0.555484,0.840254,0.269281);
rgb(213pt)=(0.565498,0.842430,0.262877);
rgb(214pt)=(0.575563,0.844566,0.256415);
rgb(215pt)=(0.585678,0.846661,0.249897);
rgb(216pt)=(0.595839,0.848717,0.243329);
rgb(217pt)=(0.606045,0.850733,0.236712);
rgb(218pt)=(0.616293,0.852709,0.230052);
rgb(219pt)=(0.626579,0.854645,0.223353);
rgb(220pt)=(0.636902,0.856542,0.216620);
rgb(221pt)=(0.647257,0.858400,0.209861);
rgb(222pt)=(0.657642,0.860219,0.203082);
rgb(223pt)=(0.668054,0.861999,0.196293);
rgb(224pt)=(0.678489,0.863742,0.189503);
rgb(225pt)=(0.688944,0.865448,0.182725);
rgb(226pt)=(0.699415,0.867117,0.175971);
rgb(227pt)=(0.709898,0.868751,0.169257);
rgb(228pt)=(0.720391,0.870350,0.162603);
rgb(229pt)=(0.730889,0.871916,0.156029);
rgb(230pt)=(0.741388,0.873449,0.149561);
rgb(231pt)=(0.751884,0.874951,0.143228);
rgb(232pt)=(0.762373,0.876424,0.137064);
rgb(233pt)=(0.772852,0.877868,0.131109);
rgb(234pt)=(0.783315,0.879285,0.125405);
rgb(235pt)=(0.793760,0.880678,0.120005);
rgb(236pt)=(0.804182,0.882046,0.114965);
rgb(237pt)=(0.814576,0.883393,0.110347);
rgb(238pt)=(0.824940,0.884720,0.106217);
rgb(239pt)=(0.835270,0.886029,0.102646);
rgb(240pt)=(0.845561,0.887322,0.099702);
rgb(241pt)=(0.855810,0.888601,0.097452);
rgb(242pt)=(0.866013,0.889868,0.095953);
rgb(243pt)=(0.876168,0.891125,0.095250);
rgb(244pt)=(0.886271,0.892374,0.095374);
rgb(245pt)=(0.896320,0.893616,0.096335);
rgb(246pt)=(0.906311,0.894855,0.098125);
rgb(247pt)=(0.916242,0.896091,0.100717);
rgb(248pt)=(0.926106,0.897330,0.104071);
rgb(249pt)=(0.935904,0.898570,0.108131);
rgb(250pt)=(0.945636,0.899815,0.112838);
rgb(251pt)=(0.955300,0.901065,0.118128);
rgb(252pt)=(0.964894,0.902323,0.123941);
rgb(253pt)=(0.974417,0.903590,0.130215);
rgb(254pt)=(0.983868,0.904867,0.136897);
rgb(255pt)=(0.993248,0.906157,0.143936);
}
}
\newcommand{\RomanNumeralCaps}[1]
{\MakeUppercase{\romannumeral #1}}
\newcommand{\externaltikz}[2]{\includegraphics{./fig/#1}} 
\newtheorem{theorem}{Theorem}[section]
\newtheorem{definition}[theorem]{Definition}
\newtheorem{remark}[theorem]{Remark}
\newtheorem{example}[theorem]{Example}
\newtheorem{assumption}[theorem]{Assumption}
\newtheorem{lemma}[theorem]{Lemma}
\newcommand{\lemref}[1]{Lemma~\ref{#1}}
\newcommand{\figref}[1]{Figure~\ref{#1}}
\newcommand{\assref}[1]{Assumption~\ref{#1}}
\newcommand{\rte}{msLTE\xspace}
\newcommand{\code}[1]{\texttt{#1}}
\newcommand{\citeg}[1]{e.g., \citep[]{#1}}
\newcommand{\momentorder}{\ensuremath{N}}
\newcommand{\momentordereven}{\momentorder_\text{e}}
\newcommand{\momentorderodd}{\momentorder_\text{o}}
\newcommand{\PN}[1][\momentorder]{\ensuremath{\text{P}_{#1}}}
\newcommand{\SecPN}[1][\momentorder]{\ensuremath{\text{P}_{#1}^{2\text{nd}}}\xspace}
\newcommand{\SPN}[1][\momentorder]{\ensuremath{\text{SP}_{#1}}\xspace}
\newcommand{\FPN}[1][\momentorder]{\ensuremath{\text{FP}_{#1}}}
\newcommand{\PPN}[1][\momentorder]{\ensuremath{\text{PP}_{#1}}}
\newcommand{\DN}[1][\momentorder]{\ensuremath{\text{D}_{#1}}}
\newcommand{\abs}[1]{\ensuremath{\left| #1 \right|}}
\newcommand{\norm}[2]{\ensuremath{\left\Vert #1 \right\Vert_{#2}}}
\newcommand{\dup}{\,\textup{d}}
\def\identity{\ensuremath{E}}
\newcommand{\matlab}{\textsc{Matlab}\xspace}
\newcommand{\python}{\textsc{Python}\xspace}
\newcommand{\fenics}{FEniCS\xspace}
\newcommand{\anglevar}{\boldsymbol{\Omega}}
\newcommand{\anglevarz}{\mu}
\newcommand{\anglevarphi}{\varphi}
\newcommand{\SC}{\anglevar}
\newcommand{\SCheight}{\anglevarz}
\newcommand{\SCangle}{\anglevarphi}
\newcommand{\SCx}{\ensuremath{\Omega_\x}}
\newcommand{\SCy}{\ensuremath{\Omega_\y}}
\newcommand{\SCz}{\ensuremath{\Omega_\z}}
\newcommand{\reflection}[1]{r\left(#1\right)}
\newcommand{\spacevar}{\mathbf{x}}
\newcommand{\spatialVariable}{\spacevar}
\newcommand{\x}{x}
\newcommand{\y}{\ensuremath{y}}
\newcommand{\z}{\ensuremath{z}}
\newcommand{\Domain}{\ensuremath{X}}
\newcommand{\R}{\mathbb{R}}
\newcommand{\N}{\mathbb{N}}
\newcommand{\Lp}[1]{\ensuremath{L_{#1}}}
\newcommand{\Dx}{\nabla_\spatialVariable}
\newcommand{\dx}{\partial_{\x}}
\newcommand{\dy}{\partial_{\y}}
\newcommand{\dz}{\partial_{\z}}
\newcommand{\intensity}{I}
\newcommand{\distribution}[1][ ]{\ensuremath{\intensity_{#1}}}
\newcommand{\ansatz}[1][ ]{\ensuremath{\hat{\intensity}_{#1}}}
\newcommand{\kernel}{\kappa}
\newcommand{\kernelfun}[2]{\kernel\left(#1,#2\right)}
\newcommand{\kernelfuniso}[1]{\hat{\kernel}\left(#1\right)}
\newcommand{\kernelfunisoeven}[1]{\hat{\kernel}_{\text{e}}\left(#1\right)}
\newcommand{\kernelfunisoodd}[1]{\hat{\kernel}_{\text{o}}\left(#1\right)}
\newcommand{\collisionkernel}{\kernel}
\newcommand{\collisionkernellb}{\kernel_0}
\newcommand{\domainboundary}{\Gamma}
\newcommand{\intDomain}[1]{\ensuremath{\int\limits_\Domain #1~\dup\spatialVariable}}
\newcommand{\intDomainBoundary}[1]{\ensuremath{\int\limits_{\domainboundary} #1~\dup s}}
\newcommand{\ints}[1]{\ensuremath{\left<#1\right>}}
\newcommand{\intSphere}[1]{\ensuremath{\int\limits_{\sphere}#1~\dup\SC}}
\newcommand{\intSpherePrime}[1]{\ensuremath{\int\limits_{\sphere}#1~\dup\SC'}}
\newcommand{\intSphereSubset}[2][\spheresubset]{\ensuremath{\int\limits_{#1}#2~\dup\SC}}
\newcommand{\testFunction}{\ensuremath{v}}
\newcommand{\testFunctionVec}{\ensuremath{\mathbf{\testFunction}}} 
\newcommand{\scattercoeff}{\sigma_s}
\newcommand{\scattering}{\scattercoeff}
\newcommand{\absorpcoeff}{\sigma_a}
\newcommand{\anisotropy}{g}
\newcommand{\absorption}{\absorpcoeff}
\newcommand{\attenuationcoeff}{\sigma_t}
\newcommand{\collision}[1]{\mathcal{C}\left(#1\right)}
\newcommand{\momentvec}{\moments}
\newcommand{\momentveceven}{\moments[e]}
\newcommand{\momentvecodd}{\moments[o]}
\newcommand{\basis}[1][ ]{{\ensuremath{\mathbf{b}_{#1}}}} 
\newcommand{\basistransp}[1][ ]{{\ensuremath{\mathbf{b}^T_{#1}}}} 
\newcommand{\basisveceven}{\basis[e]}
\newcommand{\basisvecodd}{\basis[o]}
\newcommand{\basisveceventransp}{\basistransp[e]}
\newcommand{\basisvecoddtransp}{\basistransp[o]}
\newcommand{\basisind}{\basisindex}
\newcommand{\basiscomp}[1][\basisindex]{\ensuremath{b_{#1}}} 
\newcommand{\basisindex}{i}
\newcommand{\basisnr}{n}
\newcommand{\testVector}{\mathbf{c}}
\newcommand{\bilinearForm}{a}
\newcommand{\SHl}{{\ensuremath{l}}} 
\newcommand{\SHm}{{\ensuremath{m}}} 
\newcommand{\SHtheta}{{\ensuremath{\Theta}}} 
\newcommand{\rotationSphere}{\mathcal{R}}
\newcommand{\ShRotationMatrix}[2]{R_{#2}\left(#1\right)}
\newcommand{\ShRotationMatrixTransp}[2]{R_{#2}^T\left(#1\right)}
\newcommand{\Ylmmod}[2]{{\ensuremath{Y_{#1}^{#2}}}} 
\newcommand{\Ylm}{\Ylmmod{\SHl}{\SHm}} 
\newcommand{\Slmmod}[2]{{\ensuremath{S_{#1}^{#2}}}} 
\newcommand{\Slm}{\Slmmod{\SHl}{\SHm}} 
\newcommand{\momentnumber}{\basisnr}
\newcommand{\momentcomp}[1]{\ensuremath{u_{#1}}} 
\newcommand{\moments}[1][ ]{\ensuremath{\mathbf{u}_{#1}}} 
\def\bsalpha{\boldsymbol{\alpha}}
\newcommand{\multipliersiso}[1][ ]{\ensuremath{\bsalpha_{\text{iso}}}}
\newcommand{\momentmatrix}{\fluxmatrix}
\def\PnScatteringMatrix{\ensuremath{\Sigma}}
\newcommand{\scattermatrix}{\PnScatteringMatrix}
\newcommand{\scattermatrixee}{\scattermatrix_{\text{ee}}}
\newcommand{\scattermatrixeo}{\scattermatrix_{\text{eo}}}
\newcommand{\scattermatrixoe}{\scattermatrix_{\text{oe}}}
\newcommand{\scattermatrixoo}{\scattermatrix_{\text{oo}}}
\newcommand{\rhsmatrix}{C}
\newcommand{\rhsmatrixee}{\rhsmatrix_{\text{ee}}}
\newcommand{\rhsmatrixeo}{\rhsmatrix_{\text{eo}}}
\newcommand{\rhsmatrixoe}{\rhsmatrix_{\text{oe}}}
\newcommand{\rhsmatrixoo}{\rhsmatrix_{\text{oo}}}
\newcommand{\boundaryMatrixOdd}[1][\outernormal]{{H_\text{o}(#1)}}
\newcommand{\boundaryMatrixEven}[1][\outernormal]{{H_\text{e}(#1)}}
\newcommand{\fluxmatrix}{T}
\newcommand{\fluxmatrixOperatorEven}[1]{T_\text{e}\left(#1\right)}
\newcommand{\fluxmatrixOperatorOdd}[1]{T_\text{o}\left(#1\right)}
\newcommand{\fluxmatrixOperatorEvenSolo}{T_\text{e}}
\newcommand{\fluxmatrixOperatorOddSolo}{T_\text{o}}
\newcommand{\fluxmatrixEvenDim}[1]{T_\text{eo}^{#1}}
\newcommand{\fluxmatrixOddDim}[1]{T_\text{oe}^{#1}}
\newcommand{\reflectivity}{\rho}
\newcommand{\sphere}[1][2]{\ensuremath{\mathcal{S}^{#1}}}
\newcommand{\outernormal}{\mathbf{n}}
\newcommand{\outernormalcomp}{n}
\newcommand{\distributionboundary}{\ensuremath{\distribution[\domainboundary]}}
\newcommand{\momentsboundary}{\ensuremath{\moments[\domainboundary]}}
\newcommand{\systemMatrixDomain}{K}
\newcommand{\systemMatrixBoundary}{B}
\newcommand{\radEnergy}{\phi}
\newcommand{\radEnergyPN}{ \radEnergy_{\PN} }
\newcommand{\radEnergyKin}{ \radEnergy_{\textup{kin}} }
\newcommand{\radEnergySecPN}[1][\momentorder]{ \radEnergy_{\SecPN[#1]} }
\newcommand{\radEnergyDOSolo}{ \radEnergy_{\textup{DOM}} }
\newcommand{\radEnergyDO}[1][]{ \radEnergy_{\textup{DOM, #1}} }
\newcommand{\SecPNRefined}[1][\momentorder]{ \ensuremath{\text{P}_{#1}^{2\text{nd}, \textup{fine} }}\xspace}
\begin{document}

\begin{frontmatter}

\title{The second-order formulation of the $\PN$ equations with Marshak boundary conditions}

\author{Matthias Andres\fnref{label1}%
}
\author{Florian Schneider\fnref{label2}%
}
\fntext[label1]{Fachbereich Mathematik, TU Kaiserslautern, Erwin-Schr\"odinger-Str., 67663 Kaiserslautern, Germany, {\tt andres@mathematik.uni-kl.de}%
}
\fntext[label2]{Fachbereich Mathematik, TU Kaiserslautern, Erwin-Schr\"odinger-Str., 67663 Kaiserslautern, Germany, {\tt schneider@mathematik.uni-kl.de}%
}
\begin{abstract}
We consider a reformulation of the classical $\PN$ method with Marshak boundary conditions for the approximation of the monoenergetic stationary linear transport equation as a system of second-order PDEs. 
Our derivation allows the automatic generation of a model hierarchy which can then be handed to standard PDE tools.
This method allows for heterogeneous coefficients, irregular grids, anisotropic boundary sources and anisotropic scattering. The wide applicability is demonstrated in several numerical test cases. We  make our implementation available online, which allows for fast prototyping. 


\end{abstract}
\begin{keyword}
	moment models \sep kinetic transport equation \sep automatic model generation
	\MSC[2010] 35L40  \sep 35Q70 \sep  65M60  \sep 65M70
\end{keyword}

\end{frontmatter}


\section{Introduction} 

This article is intended to provide a straight-forward derivation of a hierarchy of approximate models for the  monoenergetic stationary linear transfer equation (\rte) based on the $P_N$ equations with Marshak boundary conditions. The method is designed to be applicable in a general set of situations, e.g., irregular grids in up to three spatial dimensions, heterogeneous coefficients, anisotropic scattering or anisotropic boundary sources. We provide a demo implementation in \matlab and \python, which allows for fast prototyping.

This equation appears as a model for photon radiation transport in various physical applications, e.g., radiation transport in biological tissue during certain cancer therapies \cite{hubner2017validation} or in high-temperature processes in industry \cite{pinnau2004optimal}. 

Due to the dependence on up to three space variables and two directional variables it is hard to solve the \rte directly. One common way to discretize the solution is the $P_N$ method, \citeg{brunner2002forms}, a type of spectral approximation in the directional variable, which results in a system of first-order partial differential equations in space. The numerical treatment of the resulting system of equations is described  for the time-dependent case on a staggered grid in \citep{seibold2014starmap}. 

Another way of approximation are \emph{Simplified} $\PN$ ($\SPN$) methods, which can be derived in different ways from the  $\PN$ equations. All of them have the common goal to derive a smaller system of second-order partial differential equations in space, which then can be solved by standard PDE tools, \citeg{ge2015implementation}. As mentioned in \citep{tencer2013error}, the second-order formulation has less unknowns and does not require additional stabilization for the price of the generated matrix being less sparse.  A review on different ways to derive $\SPN$ equations is given in \citep{mcclarren2010theoretical}.
The described $\SPN$ models are under certain assumptions equivalent to the corresponding $\SPN$ models and numerical results suggested that the $\SPN$ models give higher-order corrections to the diffusion approximation of the \rte \citep{mcclarren2010theoretical}. 

We follow the same approach as in \citep{modest2008elliptic, modest2012further, ge2017high}. We take a subset of the $\PN$ equations to express the odd moments in terms of even moments by algebraic transformations. We plug the resulting terms into the remaining equations and by this transform the system of first-order PDEs into a system of second-order PDEs. This is different from the classical ad-hoc $\SPN$ derivation in 1D slab geometry, \citeg{klose2006light, hamilton2016efficient, modest2014elliptic, zhang2013iterative,  liu2010evaluation}, as we perform all calculations on the full 3D system. One advantage of this, depending on a few mild assumptions on the coefficients and the regularity of the solutions,  is the equivalence of the solutions to those of the original $\PN$ method what is discussed as one of the main issues of the classical $\SPN$ approach \citep{pu2017mathematical}. 
This approach comes along with two issues that we would like to address in this work. 
First, the algebraic transformations become very tedious and result in lengthy expressions. Second, there is an ambiguity of choosing the ``relevant'' subset of half-moments for the Marshak boundary conditions.

By choosing a suitable formulation of the $\PN$ system we are able to delegate the transformation to a computer algebra system (e.g., \matlab's Symbolic Toolbox \citep{matlab}) and thus  automatize the tedious algebraic calculations. The result can be forwarded to a standard PDE tool, like done in this work to the \python Toolbox \fenics \citep{alnaes2015fenics, logg2012automated}. Furthermore we suggest a certain selection of boundary half-moments for which we prove the existence of a weak formulation of the second-order system. Here we would like to note that the proper treatment of boundary conditions has been seen as one of the major issues in this context in  \citep{pu2017mathematical}. 
 

Classical S$\PN$ methods produce a system of equations of size $\sim N$, whereas the $\PN$ method as well as our approach yields systems of size $\sim N^2$. Furthermore, eventhough our approach looks similar to the above mentioned ad-hoc derivation, it does not yield a ``simplified'' version of the $\PN$ equations, but an equivalent ``second-order''  formulation, provided that the $\PN$ solution is smooth enough to allow all steps  during the transformation. We will refer to our method as $\SecPN$.

Similar to \citep{seibold2014starmap, leveque2009python}, we follow the FAIR guiding principles for scientific research \citep{wilkinson2016fair} and make all codes, including files to generate the numerical results of this article,  available to the reader online \cite{githubRepo}. 

In Section \ref{sec:Modeling} we review the standard $\PN$ approach, which is then reformulated as a system of second-order PDEs in space in Section \ref{sec:SPN}. In Section \ref{sec:Numerics} we look at different examples in one and two space dimensions to demonstrate the wide applicability of  our approach, followed by concluding remarks in Section \ref{sec:Conclusion}.

\section{Modeling} 
\label{sec:Modeling}
We consider the monoenergetic stationary linear transport equation, \citeg{cercignani1988boltzmann},
\begin{align}
\label{eq:TransportEquation}
\SC\cdot\Dx\distribution + \absorption\distribution = \scattering\collision{\distribution},
\end{align}
which describes the time-stationary density of particles at \textbf{position}
$\spatialVariable~=~(\x,\y,\z)^T$ in a domain $\Domain~\subseteq~\R^3$  with speed $\SC\in\sphere=\{\SC\in\R^3: ||\SC||_2=1\}$ under the events of
\textbf{scattering} (proportional to $\scattering\left(\spatialVariable\right)$) and \textbf{absorption}
(proportional to $\absorption\left(\spatialVariable\right)$). The quantity $\attenuationcoeff := \absorption+\scattering$ is called the \textbf{attenuation} coefficient. 

Collisions are modeled using the BGK-type collision \cite{park2016entropy, lev1996moment}
operator
\begin{equation}
\collision{\distribution} =  \int\limits_{\sphere} \collisionkernel(\SC, \SC^\prime)
\distribution(\spatialVariable, \SC^\prime)\dup\SC^\prime
- \int\limits_{\sphere} \collisionkernel(\SC^\prime, \SC) \distribution(\spatialVariable, \SC)\dup\SC^\prime,
\label{eq:collisionOperatorR}
\end{equation}
with collision kernel $\collisionkernel: \sphere\times \sphere \rightarrow \R$.

\begin{assumption}
	\label{ass:Kernel}
We assume the collision kernel $\collisionkernel$ to be:
\begin{enumerate}[label=(A\arabic*), ref=(A\arabic*)]
	\item \label{ass:kernelpos} Strictly positive: $\collisionkernel(\SC,\SC')\geq \collisionkernellb>0$ for all $\SC,\SC'\in\sphere$;
	\item\label{ass:kernelsym} Symmetric: $\collisionkernel(\SC,\SC')=\collisionkernel(\SC',\SC)$ for all $\SC,\SC'\in\sphere$;
	\item \label{ass:kernelnormalized}Normalized: 
	$\int\limits_{\sphere} \collisionkernel(\SC', \SC) d\SC^\prime~\equiv~1$ for all $\SC \in \sphere$.
\end{enumerate}
\end{assumption}

\begin{example}
	\label{ex:isotropickernel}
Choosing the kernel to be constant, i.e., $\collisionkernel(\SC, \SC^\prime) \equiv \frac{1}{\abs{\sphere}} = \frac{1}{4\pi}$ for all $\SC,\SC'\in\sphere$, yields \emph{isotropic scattering}.
\end{example}

\begin{example}
	\label{ex:hgkernel}
	A typical example for anisotropic scattering is the \textbf{Henyey-Greenstein} kernel \cite{henyey1941diffuse}:
	\begin{align}
	\label{eq:HG}
	\kernelfun{\SC}{\SC'} = \frac{1}{4\pi}\frac{1-\anisotropy^2}{\left(1+\anisotropy^2-2\anisotropy\cos\left(\SC^T\SC'\right)\right)^{\frac32}}.
	\end{align}
	The parameter $\anisotropy\in[-1,1]$ can be used to blend from backscattering ($\anisotropy=-1$) over isotropic scattering ($\anisotropy=0$) to forward scattering ($\anisotropy=1$).
	
\end{example}

The transport Equation \eqref{eq:TransportEquation} is equipped with semi-transparent boundary conditions, e.g.,  \cite{larsen2002simplified}, of the form
\begin{align}
\label{eq:BCTransfer}
\distribution(\spatialVariable,\SC) &= \reflectivity(\spatialVariable, \SC)\distribution(\spatialVariable,\reflection{\SC}) + (1-\reflectivity(\spatialVariable, \SC))\distributionboundary(\spatialVariable,\SC) &\text{for } \spatialVariable\in\domainboundary , \outernormal(\spatialVariable)\cdot\SC<0,
\end{align}
where $\domainboundary :=\partial\Domain$ is the boundary of the domain, $\distributionboundary$ is a given boundary distribution, $\reflectivity(\spatialVariable, \SC)\in [0,1)$ is  the reflectivity coefficient  of the boundary and $\reflection{\SC} = \SC-2(\outernormal\cdot\SC)\outernormal$ is the direction reflected at the plane $\{\SC\in \sphere :\outernormal\cdot\SC=0\}$, where $\outernormal$ denotes the unit outward-pointing normal vector on the domain's boundary $\domainboundary$. Note that it is only possible to prescribe boundary data for ingoing particles ($\outernormal\cdot\SC<0$) since particles moving in the opposite direction cannot enter the domain.
\begin{remark}
	The reflectivity coefficient $\reflectivity(\spatialVariable, \SC)$ describes the ratio between reflected and transmitted radiation at a point $\spatialVariable\in \domainboundary$ at the boundary. It can be calculated according to Fresnel's equation and Snell's law, \citeg{larsen2002simplified} and depends on the refractive indices of the adjacent materials inside and outside of the domain. Furthermore it depends on the inner product $\outernormal(\spatialVariable)\cdot\SC$, i.e., on the angle relative to the normal vector. In order to simplify the derivation of the second-order formulation and reduce complex boundary effects in the numerical test cases we drop the directional dependency, i.e., we set $\reflectivity(\spatialVariable, \SC) = \reflectivity(\spatialVariable)$. The derivation and implementation can be extended to the direction-depending case. 
\end{remark}

\begin{assumption}[Well posedness of the \rte]
	For the following we assume that the parameters are chosen in such a way that the \rte admits a unique solution.
\end{assumption}

In many applications, \citeg{hubner2017validation, pinnau2004optimal, olbrant2010generalized, dubroca2010angular}, we are not interested on the directional dependence, but only in the \emph{radiative energy} of the distribution:
\begin{align}
	\label{eq:RadiativeEnergy}
	\radEnergy(\spatialVariable) =  \int_{\sphere} \distribution(\spatialVariable, \SC) \dup \SC.
\end{align}
Throughout this paper we parametrize the direction $\SC$ in cylindrical coordinates by
\begin{align}
\label{eq:SphericalCoordinates}
\SC = \left(\sqrt{1-\SCheight^2}\cos(\SCangle),\sqrt{1-\SCheight^2}\sin(\SCangle), \SCheight\right)^T =: \left(\SCx,\SCy,\SCz\right)^T,
\end{align}
where $\SCangle\in[0,2\pi]$ is the azimuthal angle and $\SCheight\in[-1,1]$ the cosine of the polar angle. With this we can evaluate the integral over the full sphere $\sphere$ as follows:
\begin{align*}
\ints{\cdot} := \int_{\sphere} \cdot \dup\SC = \int_{-1}^{1} \int_{0}^{2\pi} \cdot \dup \SCangle \dup\SCheight.
\end{align*}

\subsection{Moment approximations}
The following brief overview on moment approximations is based on and adopted in part from \cite{schneider2016moment}.
In general, solving Equation \eqref{eq:TransportEquation} numerically is computationally expensive since in three spatial dimensions the state space $\Domain\times\sphere$ of $\distribution$ is a subset of $\R^5$. 

For this reason it is convenient to use some type of spectral or Galerkin method to transform the high-dimensional equation into a system of lower-dimensional equations. Typically, one chooses to reduce the dimensionality by representing the angular dependency of $\distribution$ in terms of some angular basis, where in this paper we choose the so-called \textbf{real spherical harmonics} with maximum degree $\momentorder$.

\begin{definition}
	The real spherical harmonics \cite{seibold2014starmap, brunner2005twodimensional, blanco1997evaluation}  can be obtained from the complex spherical harmonics \cite[§VII.5]{courant2008methods}
	\begin{align}
	\label{eq:ComplexSphericalHarmonics}
	\Ylm(\SCheight,\SCangle) = (-1)^\SHm \cdot  \underset{=: \SHtheta_{\SHl\SHm}(\SCheight)}{\underbrace{ \sqrt{\cfrac{(2\SHl+1)}{4\pi}\cfrac{(\SHl-\SHm)!}{(\SHl+\SHm)!}}P_\SHl^\SHm(\SCheight)}} \cdot e^{i\SHm\SCangle} 
	\end{align}
	with $0\leq \SHl \leq \momentorder$, $-\SHl\leq \SHm\leq \SHl$, by splitting them into real and imaginary part \cite{blanco1997evaluation}, i.e.,
	\begin{align}
	\label{eq:Slm}
	\Slm(\SCheight,\SCangle) = \begin{cases}
	\SHtheta_{\SHl\SHm}(\SCheight)\sqrt{2}\cos(\SHm\SCangle) &, \SHm>0,\\
	\SHtheta_{\SHl 0}(\SCheight) &, \SHm=0,\\
	\SHtheta_{\SHl\abs{\SHm}}(\SCheight)\sqrt{2}\sin(\abs{\SHm}\SCangle) &, \SHm<0,
	\end{cases}
	\end{align}
	with $\SHtheta_{\SHl\SHm}$ defined as in Equation \eqref{eq:ComplexSphericalHarmonics}. Analogous to  \cite{blanco1997evaluation} the associated Legendre polynomials $P_\SHl^\SHm$ are chosen to satisfy the Rodrigues' formula\footnote{Note that sometimes the associated Legendre Polynomials are defined with a prefactor of $(-1)^\SHm$.} 
	\begin{align*}
	P_\SHl^\SHm(\SCheight) = \frac{1}{2^\SHl \SHl!}(1-\SCheight^2)^{\frac{\SHm}{2}} \frac{\dup^{\SHl+\SHm}}{\dup\SCheight^{\SHl+\SHm}}\left(\SCheight^2-1\right)^\SHl.
	\end{align*}
	Here $\SHl$ denotes the \textbf{degree} of the corresponding function.
\end{definition}	


\begin{definition}
		Depending on certain symmetry assumptions as discussed in Subsection \ref{sec:ReductionOfDimensionality}  we collect a subset of $n$ real spherical harmonics with maximum degree $\momentorder$ in the vector $\basis := \basis[\momentorder]:\sphere\to\R^{\momentnumber}$.  In the following we refer to this vector as (angular) basis of order $\momentorder$.
		
		The so-called \emph{moments} of a given distribution function $\distribution$ are then defined by
		\begin{align}
		\label{eq:moments}
		\moments := \moments[{\basis}] :=\ints{\basis[]\distribution} =    \int\limits_{\sphere} {\basis}(\anglevar)\distribution(\spacevar, \anglevar) \dup\SC = \left(\momentcomp{0},\ldots,\momentcomp{\momentnumber-1}\right)^T,
		\end{align}
		where the integration is performed componentwise.
		
\end{definition}

The set of all real spherical harmonics forms an orthonormal basis of $\Lp{2}(\sphere,\R)$ \cite{blanco1997evaluation}, what especially implies $\ints{\basis[i]\basis[j]} = \delta_{i,j}$.
This allows to express the distribution $\intensity$  in terms of a Fourier series
\begin{align*}
\distribution(\spatialVariable,\SC) = \sum_{\basisind=0}^{\infty} \ints{\basiscomp[\basisind]\distribution}\basiscomp[\basisind] = \sum_{\basisind=0}^{\infty}\momentcomp{\basisind}\basiscomp[\basisind] = \basis[\infty]\cdot\moments[\infty].
\end{align*}
In order to obtain 	a set of equations for $\moments$, we perform a Galerkin approximation of Equation \eqref{eq:TransportEquation} by projecting it onto the space spanned by $\basis$. We thus obtain
\begin{align}
\label{eq:MomentSystemUnclosedFirst}
\ints{\basis\Dx\cdot\SC\distribution} + \ints{\basis\absorption\distribution} =\ints{\basis\scattering\collision{\distribution}}.
\end{align}
Since it is impractical to work with an infinite-dimensional system, the Fourier series has to be truncated, such that a finite number of $\momentnumber<\infty$ basis functions $\basis[\momentorder]$ of order $\momentorder$ remains. As the real spherical harmonics are orthonormal w.r.t. $\ints{\cdot}$ we can choose the ansatz
\begin{align}
\label{eq:PnAnsatz}
\distribution(\spatialVariable, \SC) \approx \ansatz[\moments](\spatialVariable,\SC) = \sum_{\basisind=0}^{\momentnumber-1} \momentcomp{\basisind}\basiscomp[\basisind](\SC) = \basis(\SC)^T\moments.
\end{align}
Collecting known terms and interchanging integrals and differentiation where possible, the moment system has the form 
\begin{align}
\label{eq:MomentSystemUnclosed}
\ints{\SCx \basis\basis^T}\cdot \dx\moments + \ints{\SCy  \basis\basis^T}\cdot \dy\moments + \ints{\SCz  \basis\basis^T}\cdot \dz\moments + \absorption\moments = \scattering\ints{\basis\collision{\distribution}}.
\end{align}
By the choice of our basis the first moment $\momentcomp{0} \approx \ints{\frac{1}{\sqrt{4\pi}} \intensity} = \frac{1}{\sqrt{4\pi}} \radEnergy$ is an approximation of a multiple of the radiative energy defined in Equation \eqref{eq:RadiativeEnergy}.\\
Our choice of the scattering operator and the assumptions on the scattering kernel allow us to write
\begin{align*}
	\ints{\basis\collision{\distribution}} = (\PnScatteringMatrix - \identity_\momentnumber) \momentvec, \quad \text{where }
	\PnScatteringMatrix = \intSphere{\intSpherePrime{\basis(\SC)\basis(\SC')^T\kernelfun{\SC}{\SC'}}}
\end{align*}
and $\identity_\momentnumber$ denotes the $\momentnumber\times\momentnumber$ identity matrix.

\begin{remark}
	Unfortunately, there always exists an index $\basisind\in\{0,\dots,\momentnumber-1\}$ in Equation \eqref{eq:MomentSystemUnclosedFirst} such that the components of $\basiscomp\SC$ are not in the linear span of $\basis[\momentorder]$. Therefore, the flux term cannot be expressed in terms of $\moments[{\basis[\momentorder]}]$ without additional information. Furthermore, the same might be true for the projection of the scattering operator onto the moment-space given by $\ints{\basis\collision{\distribution}}$. This is the so-called \emph{closure problem}. There exist many different closure strategies related to different types of bases and ansatz functions. Our choice corresponds to the well-known \emph{spherical harmonics} $\PN$-model \cite{Eddington, lewis1984computational}, which can be understood as a Galerkin semi-approximation in $\SC$ for Equation \eqref{eq:TransportEquation}. 
\end{remark}	

\begin{remark}
	A big disadvantage of this model is the missing positivity of the ansatz-function $\ansatz[\moments]$ for some moments $\moments$ whereas the kinetic distribution to be approximated fulfills this property. Another undesired issue, which is a general problem of unlimited high-order approximations, are non-physical oscillations where the kinetic solution is non-smooth (the so-called \emph{Gibbs phenomenon} \cite{Tadmor1998,Shu1998}). Additionally, since the resulting system is linear, it might be necessary to use a high number of moments to ensure a reasonable approximation of the desired kinetic solution. A problem coming along with the linearity of the ansatz is the fact that the resulting wave-speeds of this system are fixed and discrete in contrast to those of the kinetic solution. However, the structure of this system is well-understood and allows for efficient numerical implementations \cite{seibold2014starmap, garrett2014comparison}.
	
	In recent years many modifications to this closure have been suggested, including the positive $\PN$ ($\PPN$), filtered $\PN$ ($\FPN$) and diffusive-corrected $\PN$ ($\DN$) \cite{garrett2014comparison}, curing some of the disadvantages of the original $\PN$ method while increasing the complexity of the system at the price of higher computational costs.
	We also want to note that the choices of other closures and angular bases are possible, e.g., minimum entropy \cite{brunner2005twodimensional, brunner2002forms, chidyagwai2018comparative, levermore1984relating, mead1984maximum, cernohorsky1994maximum, dubroca1999entropic, junk2000maximum, minerbo78maximum, brunner2001onedimensional, olbrant2012realizability, chidyagwai2018comparative, hauck2010high, alldredge2012high}, partial and mixed moments \cite{schneider2019firstorder, frank2006partial, dubroca2002half, schneider2014higher, ritter2016partial, schneider2017first} or Kershaw closures \cite{kershaw76flux, monreal2012moment, schneider2015kershaw, schneider2016kershaw}.
\end{remark}

\subsection{Reduction of dimensionality}
\label{sec:ReductionOfDimensionality}
Due to  the computational complexity of Equation \eqref{eq:TransportEquation} it is a common approach to investigate lower-dimensional models. We achieve this by assuming certain symmetries of the solution, implying that it is sufficient to perform the calculations on lower-dimensional spatial slices and a reduced set of basis functions.

\begin{itemize}
	\item 
		Following \cite{seibold2014starmap}, ``the \emph{slab geometry} radiative transfer equation is obtained by considering a slab between two infinite parallel plates. Assume for instance that the $\z$-axis is perpendicular to the plates. If the setting is invariant under translations perpendicular to, and rotations around, the $\z$-axis, then the unknown $\distribution$ depends only on the $\z$-component of the spatial variable, and one angular variable $\SCheight$ (cosine of the angle between direction and $\z$-axis)'', i.e., $\dx \distribution = \dy \distribution = 0$ and  $\distribution(\spatialVariable, \SC) = \distribution(\z, \SCheight)$.
		The functions $\Slmmod{\SHl}{\SHm}$ with $\SHm~=0$ depend on the azimuthal variable $\SCangle$ and thus do not appear in the series expansion of a distribution $\distribution$ with the assumed symmetry.
		This allows us to consider the one-dimensional approximation space $\Domain\subset\R$ in space\footnote{Note that the same symbol is used for the one-dimensional projection and for the full space.} and define the reduced angular basis 
		\begin{align*}
			 \basis[\momentorder] = \left(\Slmmod{0}{0},\Slmmod{1}{0},\ldots,\Slmmod{\momentorder}{0}\right).
		\end{align*}
		We note that the real spherical harmonics $\Slm$ with $m=0$ correspond to the normalized Legendre polynomials\footnote{We use the normalized Legendre polynomials despite the inconsistency with the literature, where typically the unnormalized Legendre polynomials are used in slab geometry.}. 
		The $\PN$ equations then read
		\begin{align}
		\label{eq:Tz}
		\underset{\momentmatrix_\z :=}{\underbrace{\ints{\SCz\basis\basis^T} }}\dz\momentvec = \left(\scattering\PnScatteringMatrix-\attenuationcoeff\identity_\momentnumber\right)\moments.
		\end{align}
		 Due to the recursive structure of the Legendre polynomials \cite{seibold2014starmap}  the flux matrix has the tridiagonal form
		\begin{align*}
		\left.\begin{aligned}
		\left(\momentmatrix_\z\right)_{\SHl,\SHl+1} &= \sqrt{\frac{1}{4\SHl^2+8\,\SHl+3}}\,\left(\SHl+1\right) = \left(\momentmatrix_\z\right)_{\SHl+1,\SHl}\\ 
		\left(\momentmatrix_\z\right)_{\SHl,\SHl} &= 0
		\end{aligned}\right\}
		\quad \text{ for } \quad  \SHl=0,\ldots , \momentorder.
		\end{align*}
		
	\item 
		If the domain is instead assumed to be infinitely elongated in the $\z$-direction and all data is $\z$-independent, the solution $\distribution$ of Equation \eqref{eq:TransportEquation} is also $\z$-independent and even in $\SCheight$ \cite{seibold2014starmap}, i.e., $\dz\distribution = 0$ and $\distribution(\spatialVariable, \SCheight, \SCangle) =  \distribution(\spatialVariable, -\SCheight, \SCangle)$. The functions $\Slmmod{\SHl}{\SHm}$ with $ \SHl~+~|\SHm|$ odd are odd in $\SCheight$ and thus do not appear in the series expansion of the solution. This allows us to consider the (two-dimensional) approximation space $\Domain\subset\R^2$ in space and define the reduced angular basis
		\begin{align*}
			\basis[\momentorder] = \left(\Slmmod{0}{0},\Slmmod{1}{-1},\Slmmod{1}{1},\ldots,\Slmmod{\momentorder}{-\momentorder},\Slmmod{\momentorder}{-\momentorder+2},\ldots,\Slmmod{\momentorder}{\momentorder-2},\Slmmod{\momentorder}{\momentorder}\right)^T,
		\end{align*}
		i.e., we use only the subset of the real spherical harmonics where  $\SHl + |\SHm|$ is even.
		The corresponding system then has the form
		\begin{align}
			\label{eq:TxTy}
		\underset{\momentmatrix_\x}{\underbrace{\ints{\SCx\basis\basis^T}}}\dx\momentvec + \underset{\momentmatrix_\y}{\underbrace{\ints{\SCy\basis\basis^T}}}\dy\momentvec = \left(\scattering\PnScatteringMatrix-\attenuationcoeff\identity_\momentnumber\right)\moments.
		\end{align}
		The matrices $\momentmatrix_\x, \momentmatrix_\y, \momentmatrix_\z$ can be found in \citep{seibold2014starmap}.
	
	\item If we do not assume any symmetry properties of the data and the solution, we include all real spherical harmonics up to degree $\momentorder$ in our angular basis:
			\begin{align*}
			\basis[\momentorder] = \left(\Slmmod{0}{0},\Slmmod{1}{-1}, \Slmmod{1}{0}, \Slmmod{1}{1},\ldots, \Slmmod{\momentorder}{-\momentorder}, \Slmmod{\momentorder}{-\momentorder + 1},\ldots,\Slmmod{\momentorder}{\momentorder - 1},\Slmmod{\momentorder}{\momentorder}\right)^T.
			\end{align*}
		
\end{itemize}


\begin{remark}[Reduced angular bases]
	Based on the symmetry assumption described above, some of the basis functions which are necessary in the full three-dimensional setting can be neglected as the corresponding moments are zero. The size of the angular basis depending on the spatial dimension can be found in Table \ref{tab:noangularbasis}.
	\begin{table}[h!]
		\centering
	\begin{tabular}{ccc}
		symmetry assumption & spatial dimension & no. spherical harmonics\\
		\hline
		rotational symmetry around z-axis & 1D & $\momentorder + 1$\\
		symmetry along z-axis &  2D & $\frac{1}{2} \momentorder^2 + \frac{3}{2}  \momentorder + 1$ \\
		no symmetry: full problem & 3D & $\momentorder^2 + 2 \momentorder + 1$
	\end{tabular}
		\caption{Size of angular basis}
		\label{tab:noangularbasis}
	\end{table} 
\end{remark}


\section{Second-order formulation of the $\PN$ equations: $\SecPN$} 
\label{sec:SPN}
In this section we reformulate the $\PN$ equations described above as system of second-order PDEs in the space variable. This formulation has a simple structure and can easily be handed to a standard PDE tool, like demonstrated in our implementation \cite{githubRepo}. 
\begin{remark}[Smoothness]
	We would like to note that the formal derivation requires additional smoothness of the solution, i.e., equivalence of the two formulations is only given for $\PN$ solutions with the sufficient regularity. Furthermore we do not discuss the well-posedness of the resulting second-order system here.
\end{remark}

\subsection{Algebraic transformations}
The reformulation of the $\PN$ equations in second-order form is based on the parity property w.r.t. $\SC$ of the  real spherical harmonics:
\begin{align*}
\Slm(-\SC) = (-1)^\SHl\Slm(\SC).
\end{align*} 
The real spherical harmonics are called even / odd if the corresponding degree $\SHl$ is even / odd. 
In the following we only consider odd values for the order $\momentorder$. We organize the basis functions into even and odd  functions\footnote{For slab geometry and two-dimensional geometry, the reduction has to be performed accordingly.}
\begin{align*}
\basisveceven &:= \left(\Slmmod{0}{0},\Slmmod{-2}{2},\ldots,\Slmmod{2}{2},\Slmmod{-4}{4},\ldots,\Slmmod{4}{4},\ldots,\Slmmod{-\momentorder+1}{\momentorder-1},\ldots,\Slmmod{\momentorder-1}{\momentorder-1}\right),\\
\basisvecodd &:= \left(\Slmmod{-1}{1},\ldots,\Slmmod{1}{1},\Slmmod{-3}{3},\ldots,\Slmmod{3}{3},\ldots,\Slmmod{-\momentorder}{\momentorder},\ldots,\Slmmod{\momentorder}{\momentorder}\right)
\end{align*}
and rearrange the moments $\momentveceven = \ints{\basisveceven\ansatz}$ and $\momentvecodd = \ints{\basisvecodd\ansatz}$, respectively. We define $\momentordereven,\momentorderodd\in\N$ to be the sizes of $\momentveceven$ and $\momentvecodd$, respectively, i.e., $\momentordereven+\momentorderodd=\momentorder$.\\
We can then rewrite the $\PN$ ansatz \eqref{eq:PnAnsatz} as
\begin{align}
\label{eq:PnAnsatzReordered}
\ansatz(\spatialVariable,\SC) = \basisveceventransp(\SC)\momentveceven+\basisvecoddtransp(\SC)\momentvecodd.
\end{align}
In particular, with $\SCx,\SCy,\SCz$  being odd functions w.r.t. $\SC$, we can find that the flux matrices in Equation \eqref{eq:MomentSystemUnclosed} decouple, since
\begin{align*}
\begin{pmatrix}
\ints{(\Dx\cdot\SC) \basisveceven\ansatz}\\
\ints{(\Dx\cdot\SC) \basisvecodd\ansatz}
\end{pmatrix}
 \stackrel{\eqref{eq:PnAnsatzReordered}}{=} 
 \begin{pmatrix}
 \ints{(\Dx\cdot\SC) \basisveceven\basisveceventransp}\momentveceven+\ints{(\Dx\cdot\SC) \basisveceven\basisvecoddtransp}\momentvecodd\\
 \ints{(\Dx\cdot\SC) \basisvecodd\basisveceventransp}\momentveceven+\ints{(\Dx\cdot\SC) \basisvecodd\basisvecoddtransp}\momentvecodd
 \end{pmatrix}
\stackrel{\text{parity}}{=}  
 \begin{pmatrix}
\ints{(\Dx\cdot\SC) \basisveceven\basisvecoddtransp}\momentvecodd\\
\ints{(\Dx\cdot\SC) \basisvecodd\basisveceventransp}\momentveceven
\end{pmatrix}.
\end{align*}
\begin{remark}
	The fact that the $\PN$ equations decouple is a well-known result. E.g., in \cite{seibold2014starmap} this was used to derive an efficient implementation for the time-dependent $\PN$ equations, where the decoupled structure was employed on a staggered grid.
\end{remark}
The $\PN$ system can thus be rewritten as 
\begin{subequations}
\begin{align}
\label{eq:Pn3Dordered_a}
\overset{\fluxmatrixOperatorEven{\momentvecodd} := }{\overbrace{
\fluxmatrixEvenDim{\x}\dx\momentvecodd + \fluxmatrixEvenDim{\y}\dy\momentvecodd+ \fluxmatrixEvenDim{\z}\dz\momentvecodd}} &= \overset{\rhsmatrixee :=}{\overbrace{ \left(\scattercoeff\scattermatrixee-\attenuationcoeff\identity_{\momentordereven}\right)}}\momentveceven + \overset{\rhsmatrixeo :=}{\overbrace{\scattercoeff\scattermatrixeo}}\momentvecodd,\\
\label{eq:Pn3Dordered_b}
\underset{\fluxmatrixOperatorOdd{\momentveceven}:=}{\underbrace{\fluxmatrixOddDim{\x}\dx\momentveceven + \fluxmatrixOddDim{\y}\dy\momentveceven+ \fluxmatrixOddDim{\z}\dz\momentveceven }}&=
\underset{\rhsmatrixoe :=}{\underbrace{\scattercoeff\scattermatrixoe}}\momentveceven + \underset{\rhsmatrixoo :=}{\underbrace{\left(\scattercoeff\scattermatrixoo-\attenuationcoeff\identity_{\momentorderodd}\right)}}\momentvecodd,
\end{align}
\end{subequations}
where $\fluxmatrixEvenDim{i} :=\ints{\SC_{i}\basisveceven\basisvecoddtransp}$ and $\fluxmatrixOddDim{i} :=\ints{\SC_{i}\basisvecodd\basisveceventransp}$ for $i\in\{\x,\y,\z\}$ and $\scattermatrixee,\ldots,\scattermatrixoo$ are the rows and columns of $\scattermatrix$ according to the reordering of $\moments = \left(\momentveceven,\momentvecodd\right)^T$.  Here, $\fluxmatrixOperatorEvenSolo$ and $ \fluxmatrixOperatorOddSolo$ define formal linear differential operators.
In \lemref{lem:CooInvertible} we will show, that $\rhsmatrixoo\in\R^{\momentorderodd\times\momentorderodd}$ is invertible (under the assumption of $\attenuationcoeff > 0$).
We can then formally solve Equation \eqref{eq:Pn3Dordered_b} for $\momentvecodd$, i.e.,
\begin{align}
\label{eq:ReductionOperator3D}
\momentvecodd = \rhsmatrixoo^{-1}\left(\fluxmatrixOperatorOdd{\momentveceven}-\rhsmatrixoe\momentveceven\right),
\end{align}
and plug it into Equation \eqref{eq:Pn3Dordered_a} to obtain a second-order system of linear, stationary drift-diffusion equations:
\begin{align}
\label{eq:SPNequations}
\fluxmatrixOperatorEven{\rhsmatrixoo^{-1}\left(\fluxmatrixOperatorOdd{\momentveceven}-\rhsmatrixoe\momentveceven\right)}
= 
\rhsmatrixee\momentveceven + \rhsmatrixeo\rhsmatrixoo^{-1}\left(\fluxmatrixOperatorOdd{\momentveceven}-\rhsmatrixoe\momentveceven\right).
\end{align}
\begin{assumption}[No-drift]
	\label{ass:nodrift}
	Assume that the kernel $\kernel$ is chosen in such a way that $\rhsmatrixoe = 0$ and $\rhsmatrixeo = 0$.
\end{assumption}
Based on Assumption \ref{ass:nodrift} the second-order formulation reduces to
\begin{align}
\label{eq:SPNequationsNoDrift}
\fluxmatrixOperatorEven{\rhsmatrixoo^{-1}\left(\fluxmatrixOperatorOdd{\momentveceven}\right)}
= 
\rhsmatrixee\momentveceven.
\end{align}
Note that $\rhsmatrixoo$ depends on the quantities $\scattering$ and $\absorption$ and thus cannot be pulled out of the differential operator if the physical coefficients are not space-homogeneous.
\begin{remark}
	We would like to point out that the previous Assumption \ref{ass:nodrift} is necessary  to get rid of the drift terms in  Equation \eqref{eq:SPNequations}. Even though many kernels satisfying \assref{ass:Kernel} also satisfy \assref{ass:nodrift}, this is not true in general, see, e.g.,
	\begin{align}
	\label{eq:kerneldrift}
	\kernel(\SC,\SC') = \frac{3\left( 25\SCheight^2 + 25(\SCheight')^2 - 75\SCheight^2(\SCheight')^2 + 45\SCheight^2\SCheight' + 45\SCheight(\SCheight')^2 - 27\SCheight\SCheight' - 15\SCheight - 15\SCheight' + 150 \right)}{1900 \pi}
	\end{align}
	which yields for $\momentorder=3$ the matrix
	\begin{align*}
	\scattermatrixeo = \left(\begin{array}{cccccccccc} 0 & 0 & 0 & 0 & 0 & 0 & 0 & 0 & 0 & 0\\ 0 & 0 & 0 & 0 & 0 & 0 & 0 & 0 & 0 & 0\\ 0 & 0 & 0 & 0 & 0 & 0 & 0 & 0 & 0 & 0\\ 0 & \frac{6\sqrt{15}}{475}& 0 & 0 & 0 & 0 & 0 & 0 & 0 & 0\\ 0 & 0 & 0 & 0 & 0 & 0 & 0 & 0 & 0 & 0\\ 0 & 0 & 0 & 0 & 0 & 0 & 0 & 0 & 0 & 0 \end{array}\right).
	\end{align*}
	A plot of this kernel (projected onto the $\z$-component) is given in \figref{fig:DriftKernel}.
	However, numerical tests (\code{checkKernelAssumption.m}) have shown that many physically relevant kernels do satisfy this assumption (without proof), like linearly anisotropic scattering (Eddington scattering), Rayleigh scattering, Kagiwada-Kalaba scattering or the von-Mises-Fischer scattering \cite{dEon2016hitchhiker, chandrasekhar2013radiative, kagiwada1967multiple, gkioulekas2013understanding}.
	Especially those kernels satisfying the assumption in Lemma \lemref{lem:kernelassump} have the desired property.
\end{remark}
\begin{figure}
	\centering
	\externaltikz{DriftKernel}{\input{./Matlab2Latex/driftkernel.tex}}
	\caption{Surface plot of the kernel in Equation \eqref{eq:kerneldrift}, which violates the no-drift assumption (\assref{ass:nodrift}). The value of $\kernel(\SC,\SC')$ is encoded in the color scale.}
	\label{fig:DriftKernel}
\end{figure}

\begin{lemma}[No drift]
	\label{lem:kernelassump}
	Let the kernel $\kernel$ satisfy Assumption \ref{ass:Kernel} and furthermore $\kernelfun{\SC}{\SC'} = \kernelfuniso{\SC^T\SC'} $ for all $\SC, \SC' \in \sphere$. Then the kernel satisfies \assref{ass:nodrift}, i.e., no drift terms occur in the resulting $\SecPN$ formulation. In particular, this holds true for the kernels in Examples~\ref{ex:isotropickernel} and \ref{ex:hgkernel}. 
\end{lemma}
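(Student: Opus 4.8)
The plan is to reduce the required identities $\scattermatrixeo = 0$ and $\scattermatrixoe = 0$ to a parity (antipodal-symmetry) argument. Recall that the full scattering matrix has entries
\[
(\scattermatrix)_{ij} = \intSphere{\basiscomp[i](\SC)\intSpherePrime{\kernelfuniso{\SC^T\SC'}\basiscomp[j](\SC')}},
\]
and that the blocks $\scattermatrixeo$ and $\scattermatrixoe$ are precisely the entries in which one index refers to an even-degree harmonic and the other to an odd-degree harmonic. By the parity relation $\Slm(-\SC) = (-1)^\SHl\Slm(\SC)$ already used to split the basis, each $\basiscomp[j]$ is even or odd under the antipodal map $\SC\mapsto-\SC$ according to the parity of its degree $\SHl_j$.

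The key structural fact I would record first is that, because $(-\SC)^T(-\SC') = \SC^T\SC'$, the hypothesis $\kernelfun{\SC}{\SC'} = \kernelfuniso{\SC^T\SC'}$ renders the kernel invariant under the \emph{joint} antipodal map $(\SC,\SC')\mapsto(-\SC,-\SC')$. Writing $q_j(\SC) := \intSpherePrime{\kernelfuniso{\SC^T\SC'}\basiscomp[j](\SC')}$ for the inner integral and substituting $\SC'\mapsto-\SC'$ (a measure-preserving isometry of $\sphere$) together with the parity of $\basiscomp[j]$, one obtains $q_j(-\SC) = (-1)^{\SHl_j}q_j(\SC)$; that is, the scattering operator preserves the parity of its argument. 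Consequently, whenever $\basiscomp[i]$ and $\basiscomp[j]$ have opposite parity the integrand $\basiscomp[i]\,q_j$ is odd under $\SC\mapsto-\SC$, so $\intSphere{\basiscomp[i](\SC)\,q_j(\SC)} = 0$. This says exactly that every entry of $\scattermatrixeo$ and of $\scattermatrixoe$ vanishes, which is \assref{ass:nodrift}.

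For the two named examples it then only remains to note that both kernels are functions of $\SC^T\SC'$: the kernel of \exref{ex:isotropickernel} is the constant $\tfrac{1}{4\pi}$, and the Henyey-Greenstein kernel of \exref{ex:hgkernel} depends on $\SC,\SC'$ solely through $\SC^T\SC'$. Hence both fall under the hypothesis and inherit the no-drift property. The reduced slab and two-dimensional bases require no change, since they too consist only of harmonics of definite parity.

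I do not expect a serious obstacle on this route; the only points needing care are that the antipodal map preserves the spherical measure (legitimising the change of variables) and that one tracks parity rather than degree. If one prefers to expose more structure, the Funk-Hecke theorem upgrades the conclusion: for a kernel depending only on $\SC^T\SC'$ it gives $q_j = \lambda_{\SHl_j}\basiscomp[j]$ with $\lambda_\SHl = 2\pi\int_{-1}^{1}\kernelfuniso{t}P_\SHl(t)\dup t$ depending only on the degree $\SHl$, so that $\scattermatrix = \diag(\lambda_{\SHl_0},\dots,\lambda_{\SHl_{\momentnumber-1}})$ is in fact diagonal and $\scattermatrixeo,\scattermatrixoe$ vanish as off-diagonal blocks. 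The genuinely nontrivial input in that alternative is the order-independence (independence of $\SHm$) of $\lambda_\SHl$, which is the content of Funk-Hecke.
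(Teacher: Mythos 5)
Your proof is correct, and it takes a genuinely different --- and shorter --- route than the paper's. The paper rotates $\SC$ to $e_3$, invokes the transformation law $\basisvecodd(\rotationSphere^T\hat{\SC}) = \ShRotationMatrixTransp{\SC}{}\basisvecodd(\hat{\SC})$ for real spherical harmonics under rotations, splits the kernel into its even and odd parts $\kernelfuniso{\xi}=\kernelfunisoeven{\xi}+\kernelfunisoodd{\xi}$, and then kills the inner integral by an explicit hemisphere-splitting computation (substituting $\hat{\SCheight}\mapsto-\hat{\SCheight}$, $\hat{\SCangle}\mapsto\hat{\SCangle}+\pi$ and using periodicity). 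Your argument bypasses all of this: the joint antipodal map $(\SC,\SC')\mapsto(-\SC,-\SC')$ fixes $\SC^T\SC'$, so the single substitution $\SC'\mapsto-\SC'$ gives $q_j(-\SC)=(-1)^{\SHl_j}q_j(\SC)$ directly, with no case distinction on the parity of $\kernelfuniso{\cdot}$ and no rotation machinery; the mixed-parity entries of $\scattermatrix$ then vanish because their integrands are odd under $\SC\mapsto-\SC$. What the paper's heavier route buys is mainly expository economy elsewhere: the same rotation-matrix device reappears in the proof of \lemref{lem:InvertibleBoundary}, and the rotated form $\SC^T\SC'=\hat{\SCheight}$ is what one would compute with anyway. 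Your Funk--Hecke remark is also correct and strictly stronger --- it yields that $\scattermatrix$ is diagonal with eigenvalues $\lambda_\SHl=2\pi\int_{-1}^{1}\kernelfuniso{t}P_\SHl(t)\dup t$ depending only on the degree, which implies \assref{ass:nodrift} as a special case --- though it imports a nontrivial classical theorem where a two-line parity argument suffices.
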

\begin{proof}
	We only show the result for $\kernelfuniso{\xi}$ even, i.e., $\kernelfuniso{\xi} = \kernelfuniso{-\xi}$. The case of $\kernelfuniso{\xi}$ odd works analogously. The final result then follows by considering the even-odd decomposition of the general kernel as $\kernelfuniso{\xi} = \kernelfunisoeven{\xi}+\kernelfunisoodd{\xi}$, where $\kernelfunisoeven{\xi} = \frac12\left(\kernelfuniso{\xi}+\kernelfuniso{-\xi}\right)$ and $\kernelfunisoodd{\xi} = \frac12\left(\kernelfuniso{\xi}-\kernelfuniso{-\xi}\right)$ denote the even and odd parts of $\kernelfuniso{\xi}$, respectively.
	
	Let $\rotationSphere\in  \R^{3\times 3}$ be any rotation matrix that rotates $\SC$ to $e_3$, i.e., $\rotationSphere\SC = e_3$ with $\det(\rotationSphere) = 1$. We define the new parametrization of $\SC'$ as $\hat{\SC} = \left(\sqrt{1-\hat{\SCheight}^2}\cos(\hat{\SCangle}),\sqrt{1-\hat{\SCheight}^2}\sin(\hat{\SCangle}),\hat{\SCheight}\right)^T := \rotationSphere\SC'$.
	Due to the choice of our angular basis it can be shown  \cite{blanco1997evaluation, modest2008elliptic} that there is a rotation matrix $\ShRotationMatrix{\SC}{} \in \R^{\momentorderodd\times \momentorderodd}$ with
	\begin{align*}
		\basisvecodd(\rotationSphere^T\hat{\SC}) = \ShRotationMatrixTransp{\SC}{} \basisvecodd(\hat{\SC}),
	\end{align*}
	respectively for the vector of even basis functions.
	Then we have by the substitution rule that
	\begin{align*}
	\scattermatrixeo &= \intSphere{\intSpherePrime{\basisveceven(\SC)\basisvecoddtransp(\SC')\kernelfuniso{\SC^T\SC'} }} = 
	\intSphere{\int_{-1}^1\int_0^{2\pi} \basisveceven(\SC)\basisvecoddtransp(\rotationSphere^T\hat{\SC})\kernelfuniso{\hat{\SCheight}}~\dup\hat{\SCangle}~\dup\hat{\SCheight}}\\
	&=
	\intSphere{\basisveceven(\SC)\int_{-1}^1\int_0^{2\pi} \basisvecoddtransp(\rotationSphere^T\hat{\SC})\kernelfuniso{\hat{\SCheight}}~\dup\hat{\SCangle}~\dup\hat{\SCheight}}
	=
	\intSphere{\basisveceven(\SC)\int_{-1}^1\int_0^{2\pi} \basisvecoddtransp(\hat{\SC})\kernelfuniso{\hat{\SCheight}}~\dup\hat{\SCangle}~\dup\hat{\SCheight}\,
	\ShRotationMatrix{\SC}{}}.
	\end{align*}
	We now only consider the inner integral:
	\begin{align*}
		\int_{-1}^1\int_0^{2\pi} \basisvecoddtransp(\hat{\SC})&\kernelfuniso{\hat{\SCheight}}~\dup\hat{\SCangle}~\dup\hat{\SCheight} 
		=
		\int_{0}^1\int_0^{2\pi} \basisvecoddtransp(\hat{\SC})\kernelfuniso{\hat{\SCheight}}~\dup\hat{\SCangle}~\dup\hat{\SCheight} 
		+
		\int_{-1}^0\int_0^{2\pi} \basisvecoddtransp(\hat{\SC})\kernelfuniso{\hat{\SCheight}}~\dup\hat{\SCangle}~\dup\hat{\SCheight}\\
		&=
		\int_{0}^1\int_0^{2\pi} \basisvecoddtransp(\hat{\SC})\kernelfuniso{\hat{\SCheight}}~\dup\hat{\SCangle}~\dup\hat{\SCheight} 
		+
		\int_{0}^1\int_0^{2\pi} \basisvecoddtransp\left(\begin{pmatrix}
		\sqrt{1-\hat{\SCheight}^2}\cos(\hat{\SCangle})\\\sqrt{1-\hat{\SCheight}^2}\sin(\hat{\SCangle})\\-\hat{\SCheight}\end{pmatrix}\right)\kernelfuniso{-\hat{\SCheight}}~\dup\hat{\SCangle}~\dup\hat{\SCheight}\\
		&=
		\int_{0}^1\int_0^{2\pi} \basisvecoddtransp(\hat{\SC})\kernelfuniso{\hat{\SCheight}}~\dup\hat{\SCangle}~\dup\hat{\SCheight} 
		+
		\int_{0}^1\int_{-\pi}^{\pi} \basisvecoddtransp\left(\begin{pmatrix}
		\sqrt{1-\hat{\SCheight}^2}\cos(\hat{\SCangle}+\pi)\\\sqrt{1-\hat{\SCheight}^2}\sin(\hat{\SCangle}+\pi)\\-\hat{\SCheight}\end{pmatrix}\right)\kernelfuniso{-\hat{\SCheight}}~\dup\hat{\SCangle}~\dup\hat{\SCheight}\\		
		&=
		\int_{0}^1\int_0^{2\pi} \basisvecoddtransp(\hat{\SC})\kernelfuniso{\hat{\SCheight}}~\dup\hat{\SCangle}~\dup\hat{\SCheight} 
		+
		\int_{0}^1\int_{-\pi}^{\pi} \basisvecoddtransp\left(-\hat{\SC}\right)\kernelfuniso{-\hat{\SCheight}}~\dup\hat{\SCangle}~\dup\hat{\SCheight}\\				
		&\stackrel[\text{parity}]{\kernelfuniso{\xi} \text{ even}}{=}
		\int_{0}^1\int_0^{2\pi} \basisvecoddtransp(\hat{\SC})\kernelfuniso{\hat{\SCheight}}~\dup\hat{\SCangle}~\dup\hat{\SCheight} 
		-
		\int_{0}^1\int_{-\pi}^{\pi} \basisvecoddtransp\left(\hat{\SC}\right)\kernelfuniso{\hat{\SCheight}}~\dup\hat{\SCangle}~\dup\hat{\SCheight}\\&=		
		\int_{0}^1\int_0^{2\pi} \basisvecoddtransp(\hat{\SC})\kernelfuniso{\hat{\SCheight}}~\dup\hat{\SCangle}~\dup\hat{\SCheight} 
		-
		\int_{0}^1\int_{0}^{2\pi} \basisvecoddtransp\left(\hat{\SC}\right)\kernelfuniso{\hat{\SCheight}}~\dup\hat{\SCangle}~\dup\hat{\SCheight}
		= 0,
	\end{align*}
	where we used that the $\sin(\SCangle)$ and $\cos(\SCangle)$ are periodic in the last equality. Thus, $\scattermatrixeo=0$ as well. The proof works in the same way for odd kernels, where we define the rotation matrix such that $\rotationSphere\SC' = e_3$, and $\hat{\SC} = \rotationSphere\SC$, and only consider the integral with respect to $\SC$.
\end{proof}
We now want to show that the reduction operator \eqref{eq:ReductionOperator3D} is well-defined.
\begin{lemma}[Solving for $\momentvecodd$ in Equation \eqref{eq:Pn3Dordered_b}]
	\label{lem:CooInvertible}
	Let the kernel $\kappa$ satisfy Assumption \ref{ass:Kernel}.
	The matrix $\rhsmatrixoo = \left(\scattercoeff\scattermatrixoo-\attenuationcoeff\identity_{\momentorderodd}\right)$ is invertible whenever $\absorption+\scattering = \attenuationcoeff>0$.
\end{lemma}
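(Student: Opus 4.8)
The plan is to prove the stronger statement that $\rhsmatrixoo$ is negative definite, from which invertibility is immediate. Everything hinges on reading the quadratic form $y^{T}\scattermatrixoo y$ for $y\in\R^{\momentorderodd}$ as a bilinear pairing of the scattering integral operator. First I would set $f := \basisvecoddtransp y : \sphere \to \R$; orthonormality of the basis gives $\ints{f^{2}} = y^{T}y =: \|y\|^{2}$, and from the definition of the odd--odd block,
\begin{align*}
y^{T}\scattermatrixoo y = \intSphere{\intSpherePrime{f(\SC)f(\SC')\kernelfun{\SC}{\SC'}}}.
\end{align*}

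Next I would establish a ``carr\'e du champ'' identity for symmetric, normalized kernels. Expanding $\left(f(\SC)-f(\SC')\right)^{2}$ and using the normalization \ref{ass:kernelnormalized} together with the symmetry \ref{ass:kernelsym}, so that $\intSpherePrime{\kernelfun{\SC}{\SC'}} = \intSphere{\kernelfun{\SC}{\SC'}} = 1$, one obtains
\begin{align*}
y^{T}\scattermatrixoo y = \|y\|^{2} - \frac12\intSphere{\intSpherePrime{\left(f(\SC)-f(\SC')\right)^{2}\kernelfun{\SC}{\SC'}}} =: \|y\|^{2} - Q(f),
\end{align*}
where $Q(f)\geq 0$ because the kernel is nonnegative. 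Substituting this into $\rhsmatrixoo = \scattercoeff\scattermatrixoo - \attenuationcoeff\identity_{\momentorderodd}$ and using $\attenuationcoeff = \absorption + \scattercoeff$ yields, for every $y$,
\begin{align*}
y^{T}\rhsmatrixoo y = \scattercoeff\left(\|y\|^{2} - Q(f)\right) - \attenuationcoeff\|y\|^{2} = -\absorption\|y\|^{2} - \scattercoeff\,Q(f).
\end{align*}
By the physical nonnegativity $\absorption,\scattercoeff\geq 0$ and $Q(f)\geq 0$, both summands are nonpositive, so $\rhsmatrixoo$ is at least negative semidefinite.

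The main obstacle, and the only place the remaining hypotheses enter, is ruling out the degenerate case $y^{T}\rhsmatrixoo y = 0$ with $y\neq 0$, which would force simultaneously $\absorption\|y\|^{2}=0$ and $\scattercoeff\,Q(f)=0$. Here the strict positivity \ref{ass:kernelpos} is decisive: since $\kernelfun{\SC}{\SC'}\geq\collisionkernellb>0$, the expression $Q(f)$ vanishes only if $f$ is constant $\SC$-almost everywhere. But $f$ is a linear combination of odd-degree real spherical harmonics, which are $\Lp{2}(\sphere)$-orthogonal to the constant mode $\Slmmod{0}{0}$; a constant function in their span therefore has vanishing mean and hence is identically zero, i.e. $y=0$. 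Consequently, for $y\neq 0$ we have $Q(f)>0$, so $\scattercoeff\,Q(f)=0$ forces $\scattercoeff=0$; combined with $\absorption\|y\|^{2}=0$ (whence $\absorption=0$) this gives $\attenuationcoeff=\absorption+\scattercoeff=0$, contradicting the hypothesis $\attenuationcoeff>0$.

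I therefore expect to conclude that $y^{T}\rhsmatrixoo y<0$ for all $y\neq 0$, so $\rhsmatrixoo$ is negative definite and in particular invertible. The only genuinely delicate point is the equality analysis in the last paragraph; the rest is the standard Dirichlet-form manipulation, which is routine once the identity for $Q(f)$ is set up. It is worth noting that this argument uses all three parts of Assumption \ref{ass:Kernel}: symmetry and normalization produce the $Q(f)$ identity, while strict positivity is what upgrades semidefiniteness to definiteness.
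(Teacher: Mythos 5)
Your proof is correct and takes essentially the same route as the paper's: both read $y^{T}\scattermatrixoo y$ as a double integral, apply the polarization identity $2ab=a^{2}+b^{2}-(a-b)^{2}$ together with symmetry and normalization of $\kernel$, and use that odd spherical harmonics have zero mean, concluding that $\rhsmatrixoo$ is negative definite. The only difference is in the final step, and it is cosmetic: the paper inserts the lower bound $\kernel\geq\collisionkernellb$ to get the explicit estimate $y^{T}\rhsmatrixoo y\leq-\left(\scattering\collisionkernellb+\absorption\right)\norm{y}{2}^{2}$, whereas you run a qualitative equality-case analysis; both are valid under the stated hypotheses.
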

\begin{proof}
	We have that 
	\begin{align*}
	\scattermatrixoo = \intSphere{\intSpherePrime{\basisvecodd(\SC)\basisvecodd(\SC')^T\kernelfun{\SC}{\SC'}}},
	\end{align*}	
	especially $\scattermatrixoo$ is symmetric due to the symmetry of $\kernel$.
	Let $\testVector\in\R^{\momentorderodd}$ and we define $\bilinearForm(\SC) ~:=~ \testVector^T\basisvecodd(\SC)$, then it holds:
	\begin{align*}
	&\testVector^T\left(\scattercoeff\scattermatrixoo-\attenuationcoeff\identity_{\momentorderodd}\right) \testVector 
	= \scattercoeff \intSphere{\intSpherePrime{\kernelfun{\SC}{\SC'}\bilinearForm(\SC)\bilinearForm(\SC') }} - \attenuationcoeff \norm{\testVector}{2}^2 \\
	&=\frac{\scattercoeff}{2}\intSphere{\intSpherePrime{\kernelfun{\SC}{\SC'}\left(\bilinearForm^2(\SC)+\bilinearForm^2(\SC')-\left(\bilinearForm(\SC)-\bilinearForm(\SC')\right)^2\right) }} - \attenuationcoeff \norm{\testVector}{2}^2 \\
	&\stackrel[\ref{ass:kernelnormalized}]{\ref{ass:kernelsym}}{=}
	\scattercoeff\ints{\bilinearForm^2} 
	- \frac{\scattercoeff}{2}\intSphere{\intSpherePrime{\kernelfun{\SC}{\SC'}\left(\bilinearForm(\SC)-\bilinearForm(\SC')\right)^2 }} - \attenuationcoeff \norm{\testVector}{2}^2 \\
	&\stackrel{\ref{ass:kernelpos}}{\leq}
		\scattercoeff\ints{\bilinearForm^2} 
	- \frac{\scattercoeff}{2}\intSphere{\intSpherePrime{\collisionkernellb\left(\bilinearForm(\SC)-\bilinearForm(\SC')\right)^2 }} - \attenuationcoeff \norm{\testVector}{2}^2 \\
	&= 
	\scattercoeff(1-\collisionkernellb)\ints{\bilinearForm^2} 
	+ {\scattercoeff\collisionkernellb}\intSphere{\intSpherePrime{\bilinearForm(\SC)\bilinearForm(\SC')}} - \attenuationcoeff \norm{\testVector}{2}^2 \\	
	&\stackrel{\eqref{eq:lem:CooInvertibleOddVanishs}}{=}
	\scattercoeff(1-\collisionkernellb)\ints{\bilinearForm^2} - \attenuationcoeff \norm{\testVector}{2}^2 
	\stackrel{\eqref{eq:lem:CooInvertibleNormRemains}}{=} \left(\scattercoeff(1-\collisionkernellb)-\attenuationcoeff\right)\norm{\testVector}{2}^2
	 =-\left(\scattering\collisionkernellb + \absorpcoeff\right)\norm{\testVector}{2}^2,
	\end{align*}
	where we used that 
	\begin{align}
	\label{eq:lem:CooInvertibleOddVanishs}
	\intSphere{\bilinearForm(\SC)} = \testVector^T\intSphere{\basisvecodd(\SC)} = 0,
	\end{align}
	as every entry in $\basisvecodd$ is orthogonal to $\basiscomp[0]=\frac{1}{\sqrt{4\pi}}$ and thus to all  constants w.r.t. $\ints{\cdot}$,
	and
	\begin{align}
	\label{eq:lem:CooInvertibleNormRemains}
	\ints{\bilinearForm^2}  = \testVector^T\intSphere{\basisvecodd\basisvecoddtransp}\,\testVector \stackrel{\text{ONB}}{=} \testVector^T\testVector = \norm{\testVector}{2}^2.
	\end{align}
	In particular, since $\attenuationcoeff = \absorpcoeff + \scattercoeff>0$ and $\collisionkernellb>0$, we get that $\scattering\collisionkernellb + \absorpcoeff>0$, which implies that $\rhsmatrixoo$ is negative definite and therefore invertible.
\end{proof}

\subsection{Weak formulation and boundary conditions}

One major problem of the $\PN$ equations is that the boundary conditions \eqref{eq:BCTransfer} of the transfer equation have to be prescribed for inward-pointing angles ($\outernormal\cdot\SC < 0$) only, whereas the hyperbolic $\PN$ system requires information for the characteristic variables related to ingoing characteristics \cite{toro2009riemann}. Although these quantities are somehow related, a consistent approximation of boundary conditions for moment models is non-trivial \cite{pomraning1964variational, larsen1991thepn, rulko1991thepn, Struchtrup2000, levermore2009boundary}.

Without thinking too much about these implications for the $\PN$ equations, we want to use the Marshak approach to derive consistent boundary conditions for Equation \eqref{eq:SPNequations}. The basic idea is to replace $\distribution$ in Equation \eqref{eq:BCTransfer} with the $\PN$ ansatz $\ansatz$ and take half moments over  $\outernormal\cdot\SC<0$ of the equation w.r.t. to a suitable subset of basis functions.
Using all basis functions would provide more boundary conditions than actually needed. The choice of ``all relevant'' basis functions is also discussed in \cite{modest2012further}.
We choose all odd basis functions in  $\basisvecodd$ for the half moments at the boundary as those are the ones which appear naturally in the weak formulation as discussed below. This  also leads to more equations than unknowns but guarantees the existence of the second-order formulation as shown in Lemma \ref{lem:InvertibleBoundary}. Whereas we reason with the existence of our second-order formulation for this particular choice of basis functions, this choice was already taken before in literature, e.g., in a classical \SPN context in \cite{hamilton2015efficient}.

We start to derive the weak form for $\momentveceven$. Let $\testFunction$ denote a suitable spatial test function and $i\in \{1, \ldots, \momentordereven\}$.  The weak form then reads
\begin{equation}
\begin{aligned}
\label{eq:weakform1}
&\intDomain{(\rhsmatrixee)_{\basisind \cdot}\momentveceven\testFunction}\\
 &\stackrel{\eqref{eq:Pn3Dordered_a}}{=}
\intDomain{ \left(\fluxmatrixOperatorEven{\momentvecodd} \right)_\basisind  \testFunction} =   \intDomain{ \left( \ints{\SC_x \left(\basisveceven\right)_i \basisvecoddtransp}\dx \momentvec + \ints{\SC_y \left(\basisveceven\right)_\basisind \basisvecoddtransp}\dy \momentvec + \ints{\SC_z \left(\basisveceven\right)_\basisind \basisvecoddtransp}\dz \momentvec\right) \testFunction}\\
 &=\intDomain{\nabla\cdot\ints{\SC\left(\basisveceven\right)_\basisind\basisvecoddtransp\momentvecodd}\testFunction} \stackrel{\text{Gauss}}{=} 
-\intDomain{\ints{\SC\left(\basisveceven\right)_\basisind\basisvecoddtransp\momentvecodd}\nabla\testFunction} + \intDomainBoundary{\ints{(\outernormal\cdot\SC)\left(\basisveceven\right)_\basisind\basisvecoddtransp\momentvecodd}\testFunction},
\end{aligned}
\end{equation}
where we used the divergence theorem in the last step. We now want to eliminate $\momentvecodd$ in  Equation \eqref{eq:weakform1} using the boundary conditions. Therefore we consider half moments of  Equation \eqref{eq:BCTransfer} with respect to the odd basis functions:
\begin{align}
\label{eq:boundaryMoment}
\intSphereSubset[\outernormal\cdot\SC<0]{\basisvecodd\left(\distribution(\spatialVariable,\SC) -  \reflectivity\distribution(\spatialVariable,\reflection{\SC})\right)} =  \underset{ \momentsboundary :=}{\underbrace{ \intSphereSubset[\outernormal\cdot\SC<0]{ (1-\reflectivity)\basisvecodd\distributionboundary(\spatialVariable,\SC)} }}.
\end{align}
We note that $\momentsboundary$ might depend on the position $\spatialVariable$ as well as the orientation of the boundary, i.e., the unit outer normal vector $\outernormal$.
Plugging in the definition of the ansatz \eqref{eq:PnAnsatzReordered} yields 
\begin{align*}
\left(\intSphereSubset[\outernormal\cdot\SC<0]{\basisvecodd(\SC)\left(\basisvecoddtransp(\SC)-\reflectivity\basisvecoddtransp(\reflection{\SC})\right) }\right) \cdot\momentvecodd 
+ 
\left(\intSphereSubset[\outernormal\cdot\SC<0]{\basisvecodd(\SC)\left(\basisveceventransp(\SC)-\reflectivity\basisveceventransp(\reflection{\SC})\right) }\right) \cdot \momentveceven
= \momentsboundary.
\end{align*}
Defining the matrices  
\begin{subequations}
\begin{align}
	\label{eq:HoMatrix}
	\boundaryMatrixOdd &:= \intSphereSubset[\outernormal\cdot\SC<0]{\basisvecodd(\SC)\left(\basisvecoddtransp(\SC)-\reflectivity\basisvecoddtransp(\reflection{\SC})\right) },\\
	\label{eq:HeMatrix}
	\boundaryMatrixEven &:= \intSphereSubset[\outernormal\cdot\SC<0]{\basisvecodd(\SC)\left(\basisveceventransp(\SC)-\reflectivity\basisveceventransp(\reflection{\SC})\right) }
\end{align}
\end{subequations}
  we are able to rewrite the equation above, given that the matrix $\boundaryMatrixOdd$ is invertible (see Lemma \ref{lem:InvertibleBoundary}),  as
\begin{align}
\label{eq:reductionOperatorBC}
\momentvecodd = \momentvecodd(\spacevar) =  \boundaryMatrixOdd^{-1}
\left(\momentsboundary-\boundaryMatrixEven \,\momentveceven\right) \hspace{1cm} (\text{for } \spacevar \in \domainboundary).
\end{align}
Thus, the final weak form reads
\begin{align}
\label{eq:weakform2}
\intDomain{\ints{\SC\left(\basisveceven\right)_\basisind\basisvecoddtransp\rhsmatrixoo^{-1}\fluxmatrixOperatorOdd{\momentveceven}}\cdot\nabla\testFunction} &+ \intDomain{(\rhsmatrixee)_{\basisind \cdot}\momentveceven\testFunction} +  \intDomainBoundary{\ints{(\outernormal\cdot\SC)\left(\basisveceven\right)_\basisind\basisvecoddtransp}\boundaryMatrixOdd^{-1}\boundaryMatrixEven \,\momentveceven\testFunction}\\
&= \intDomainBoundary{\ints{(\outernormal\cdot\SC)\left(\basisveceven\right)_\basisind\basisvecoddtransp} \boundaryMatrixOdd^{-1} \momentsboundary\testFunction}
.  \nonumber
\end{align}
It remains to show, that the reduction \eqref{eq:reductionOperatorBC} is well-defined.
\begin{lemma}[Solving for $\momentvecodd$ in Equation \eqref{eq:reductionOperatorBC}]
	\label{lem:InvertibleBoundary}
	The matrix $\boundaryMatrixOdd$ is invertible for all $\reflectivity\in (-1,1)$.
\end{lemma}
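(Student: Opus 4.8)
The plan is to prove invertibility by showing that the symmetric part of $\boundaryMatrixOdd$ is positive definite, i.e.\ that $\testVector^T\boundaryMatrixOdd\testVector>0$ for every $\testVector\in\R^{\momentorderodd}\setminus\{0\}$. This is enough, because $\boundaryMatrixOdd\testVector=0$ would force $\testVector^T\boundaryMatrixOdd\testVector=0$, a contradiction. Following the device already used in \lemref{lem:CooInvertible}, I would fix $\testVector$ and set $\bilinearForm(\SC):=\testVector^T\basisvecodd(\SC)$. Contracting the definition \eqref{eq:HoMatrix} with $\testVector$ on both sides and using $\basisvecoddtransp(\SC)\testVector=\bilinearForm(\SC)$ gives
\[
\testVector^T\boundaryMatrixOdd\testVector
= \intSphereSubset[\outernormal\cdot\SC<0]{\bilinearForm^2(\SC)}
- \reflectivity\intSphereSubset[\outernormal\cdot\SC<0]{\bilinearForm(\SC)\,\bilinearForm(\reflection{\SC})}.
\]
Writing $I_-$ for the first term, the whole problem reduces to controlling the cross term by $I_-$.

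Next I would estimate the cross term by the Cauchy--Schwarz inequality over the ingoing hemisphere, obtaining the bound $I_-^{1/2}\big(\intSphereSubset[\outernormal\cdot\SC<0]{\bilinearForm^2(\reflection{\SC})}\big)^{1/2}$. The key point is that the second factor equals $I_-$ as well. I would see this in two measure-preserving steps. First, the reflection $\reflection{\cdot}$ is an orthogonal involution (for fixed $\outernormal$) that maps the ingoing hemisphere $\{\outernormal\cdot\SC<0\}$ bijectively onto the outgoing hemisphere $\{\outernormal\cdot\SC>0\}$, so the change of variables $\SC\mapsto\reflection{\SC}$ turns $\intSphereSubset[\outernormal\cdot\SC<0]{\bilinearForm^2(\reflection{\SC})}$ into $\intSphereSubset[\outernormal\cdot\SC>0]{\bilinearForm^2(\SC)}$. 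Second, since the entries of $\basisvecodd$ are odd spherical harmonics, $\basisvecodd(-\SC)=-\basisvecodd(\SC)$ and hence $\bilinearForm^2(-\SC)=\bilinearForm^2(\SC)$; combined with the antipodal map $\SC\mapsto-\SC$, which is measure-preserving and swaps the two hemispheres, this yields $\intSphereSubset[\outernormal\cdot\SC>0]{\bilinearForm^2(\SC)}=I_-$. (Equivalently, one can note in a single step that $\SC\mapsto-\reflection{\SC}$ is a measure-preserving involution of the ingoing hemisphere and that $\bilinearForm^2(-\reflection{\SC})=\bilinearForm^2(\reflection{\SC})$.)

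Collecting the estimates gives $\testVector^T\boundaryMatrixOdd\testVector\ge I_- - \abs{\reflectivity}\,I_- = (1-\abs{\reflectivity})\,I_-$, which is strictly positive once $\abs{\reflectivity}<1$, provided $I_->0$. To settle the latter for $\testVector\neq0$, I would use that $\bilinearForm$ is a nonzero real-analytic function on $\sphere$ (a linear combination of spherical harmonics): if $I_-=0$ then $\bilinearForm$ vanishes on the open ingoing hemisphere, hence on all of $\sphere$ by analyticity, and linear independence of $\basisvecodd$ forces $\testVector=0$. Therefore $\testVector^T\boundaryMatrixOdd\testVector>0$ for all $\testVector\neq0$ and $\boundaryMatrixOdd$ is invertible for every $\reflectivity\in(-1,1)$. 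The step I expect to be the crux is the equality of the two hemisphere masses: a naive Cauchy--Schwarz bound only controls the cross term by $I_-^{1/2}I_+^{1/2}$ with $I_+$ the outgoing-hemisphere integral, and it is precisely the odd parity of $\basisvecodd$ that forces $I_+=I_-$ and produces the clean coercivity constant $1-\abs{\reflectivity}$; the positivity $I_->0$ is a comparatively minor technical point.
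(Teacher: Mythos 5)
Your proof is correct, but it takes a genuinely different route from the one in the paper. The paper factorizes $\boundaryMatrixOdd = \left(\intSphereSubset[\outernormal\cdot\SC<0]{\basisvecodd\basisvecoddtransp}\right)\left(\identity_{\momentorderodd}+\reflectivity\ShRotationMatrixTransp{\outernormal}{\pi}\right)$, using the representation-theoretic fact that a rotation of the argument acts on the coefficient vector of the odd spherical harmonics as an orthogonal matrix $\ShRotationMatrix{\outernormal}{\pi}$; it then shows the first factor is symmetric positive definite and invokes a Neumann series with $\norm{\reflectivity\ShRotationMatrixTransp{\outernormal}{\pi}}{2}=\abs{\reflectivity}<1$ for the second. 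You instead prove coercivity of the quadratic form directly, controlling the cross term $\reflectivity\intSphereSubset[\outernormal\cdot\SC<0]{\bilinearForm(\SC)\bilinearForm(\reflection{\SC})}$ by Cauchy--Schwarz and the two measure-preserving substitutions (reflection swaps the hemispheres, the antipodal map combined with odd parity identifies $I_+$ with $I_-$), arriving at the bound $(1-\abs{\reflectivity})I_->0$. What your argument buys is self-containedness: you never need the existence or orthogonality of the finite-dimensional rotation representation $\ShRotationMatrix{\outernormal}{\pi}$, which the paper imports from the literature (and reuses from \lemref{lem:kernelassump}); you work only at the level of integrals, parity and measure-preserving maps on $\sphere$, which is arguably the cleaner reason why the constant $1-\abs{\reflectivity}$ appears. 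What the paper's factorization buys is an explicit algebraic form of $\boundaryMatrixOdd$ that is also convenient for implementation, and a proof that recycles machinery already set up elsewhere. A small additional merit of your write-up: your justification that $I_->0$ for $\testVector\neq 0$ via real-analyticity of $\bilinearForm$ on the connected manifold $\sphere$ is more careful than the paper's appeal to linear independence and continuity alone, since linear independence on the full sphere does not by itself rule out a nontrivial combination vanishing on one open hemisphere.
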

\begin{proof}
The rotation matrix that rotates a vector around the axis $\outernormal = \begin{bmatrix}\outernormalcomp_\x&\outernormalcomp_\y&\outernormalcomp_\z \end{bmatrix}$ by an angle of $180^\circ$ is given by
\begin{align*}
	\rotationSphere = \begin{pmatrix}
	2\outernormalcomp_\x^2 -1 & 2\outernormalcomp_\x\outernormalcomp_\y & 2\outernormalcomp_\x\outernormalcomp_\z\\
	2\outernormalcomp_\y\outernormalcomp_\x & 2\outernormalcomp_\y^2 - 1& 2\outernormalcomp_\y\outernormalcomp_\z\\
	2\outernormalcomp_\z\outernormalcomp_\x & 2\outernormalcomp_\z\outernormalcomp_\y & 2 \outernormalcomp_\z^2 - 1
	\end{pmatrix}.
\end{align*}
The reflection of $\SC$ at the plane $\{\SC\in\sphere: \outernormal \cdot \SC = 0\}$ can be represented by a rotation around $\outernormal$ by an angle of $180^\circ$ and a subsequent negation:
\begin{align*}
	\reflection{\SC} = \SC- 2(\outernormal\cdot \SC)\outernormal = \begin{pmatrix}
		\SCx - 2\outernormalcomp_\x\left( \SC_x\outernormalcomp_\x + \SC_y \outernormalcomp_\y + \SC_z \outernormalcomp_\z \right)\\
		\SCy - 2\outernormalcomp_\y\left( \SC_x\outernormalcomp_\x + \SC_y \outernormalcomp_\y + \SC_z \outernormalcomp_\z \right)\\
		\SCz - 2\outernormalcomp_\z\left( \SC_x\outernormalcomp_\x + \SC_y \outernormalcomp_\y + \SC_z \outernormalcomp_\z \right)\\
	\end{pmatrix} = -\rotationSphere \SC.
\end{align*}
Like in the proof of Lemma \ref{lem:kernelassump} there is a rotation matrix $\ShRotationMatrix{\outernormal}{\pi}\in \R^{\momentorderodd\times \momentorderodd}$, only depending on $\outernormal$, with 
\begin{align*}
\basisvecodd(\rotationSphere\SC) = \ShRotationMatrix{\outernormal}{\pi} \basisvecodd(\SC).
\end{align*} 
Using the parity of the odd real spherical harmonics, i.e., $\basisvecodd(-\rotationSphere\SC) = -\basisvecodd(\rotationSphere\SC)$, and our assumption, that the reflectivity $\reflectivity$ does not depend on $\SC$, we can rewrite the matrix as
\begin{align*}
\boundaryMatrixOdd = \intSphereSubset[\outernormal\cdot\SC<0]{\basisvecodd(\SC)\left(\basisvecoddtransp(\SC)+\reflectivity\basisvecoddtransp(\SC)\ShRotationMatrixTransp{\outernormal}{\pi}\right) }
= \intSphereSubset[\outernormal\cdot\SC<0]{\basisvecodd(\SC)\basisvecoddtransp(\SC)}\left(\identity_{\momentorderodd} + \reflectivity\ShRotationMatrixTransp{\outernormal}{\pi}\right).
\end{align*}
$\boundaryMatrixOdd$ is thus invertible if $\intSphereSubset[\outernormal\cdot\SC<0]{\basisvecodd(\SC)\basisvecoddtransp(\SC)}$ and $\identity_{\momentorderodd} + \reflectivity\ShRotationMatrixTransp{\outernormal}{\pi}$ are invertible. Consider the vector $\testVector\in\R^{\momentorderodd}\setminus\{0\}$. Then we have that 

\begin{align*}
\testVector^T\intSphereSubset[\outernormal\cdot\SC<0]{\basisvecodd(\SC)\basisvecoddtransp(\SC)}\,\testVector = 
\intSphereSubset[\outernormal\cdot\SC<0]{(\testVector^T\basisvecodd(\SC))^2} > 0
\end{align*}
since $\testVector\neq 0$ and the real spherical harmonics being linearly independent and continuous. Thus, the first matrix in the product is symmetric, positive definite and thus invertible. Using a Neumann series, $\identity_{\momentorderodd} + \reflectivity\ShRotationMatrixTransp{\outernormal}{\pi}$ is invertible if $\norm{\reflectivity\ShRotationMatrixTransp{\outernormal}{\pi}}{}<1$ for any matrix norm. In particular, since $\ShRotationMatrixTransp{\outernormal}{\pi}$ is a rotation matrix, it has $\norm{\ShRotationMatrixTransp{\outernormal}{\pi}}{2} = 1$ (induced operator norm), such that we get $\norm{\reflectivity\ShRotationMatrixTransp{\outernormal}{\pi}}{}<1$ if $\abs{\reflectivity}<1$.

We would like to note that using rotation matrices to derive boundary conditions has also been used in a different way in \cite{modest2008elliptic, modest2012further}.
\end{proof}

\begin{remark}
	Lemma \ref{lem:InvertibleBoundary} proves the invertibility of the matrix $\boundaryMatrixOdd$ for $\reflectivity \in (-1, 1)$, which especially includes our case $\reflectivity \in [0, 1 )$.
\end{remark}

\begin{remark}
	Due to parity and the fact that $\reflection{-\SC} = -\reflection{\SC}$, we get that $\boundaryMatrixOdd[\outernormal]~=~\boundaryMatrixOdd[-\outernormal]$ and $\boundaryMatrixEven[\outernormal]~=~-\boundaryMatrixEven[-\outernormal]$.
\end{remark}

\begin{remark}
	Using the definitions in Equations \eqref{eq:Tz}, \eqref{eq:TxTy}, \eqref{eq:Pn3Dordered_a}, \eqref{eq:Pn3Dordered_b}, \eqref{eq:HoMatrix}, \eqref{eq:boundaryMoment}, \eqref{eq:HoMatrix} and \eqref{eq:HeMatrix}, we can reformulate the weak form more explicitly as 
	\begin{align*}
		&\phantom{+}\intDomain{\left(K_{xx}\cdot\dx \momentveceven + K_{xy}\cdot\dy \momentveceven  + K_{xz}\cdot\dz \momentveceven \right) \cdot\dx \testFunctionVec} 
		&&+ \intDomain{\left(K_{yx}\cdot\dx \momentveceven + K_{yy}\cdot\dy \momentveceven  + K_{yz}\cdot\dz \momentveceven \right) \cdot\dy \testFunctionVec} \\
		& + \intDomain{\left(K_{zx}\cdot\dx \momentveceven + K_{zy}\cdot\dy \momentveceven  + K_{zz}\cdot\dz \momentveceven \right)\cdot \dz \testFunctionVec} 
		&& + \intDomain{\rhsmatrixee \cdot \momentveceven \cdot \testFunctionVec} + \intDomainBoundary{\systemMatrixBoundary_l \cdot \momentveceven \cdot \testFunctionVec}\\
		& =  \intDomainBoundary{\systemMatrixBoundary_r \cdot \momentsboundary \cdot \testFunctionVec} 
	\end{align*}
	with 
	\begin{align*}
		\systemMatrixDomain_{\x\x} &= \fluxmatrixEvenDim{\x}\cdot \rhsmatrixoo^{-1} \cdot\fluxmatrixOddDim{\x}, &
		\systemMatrixDomain_{\x\y} &= \fluxmatrixEvenDim{\x}\cdot \rhsmatrixoo^{-1} \cdot\fluxmatrixOddDim{\y}, &
		\systemMatrixDomain_{\x\z} &= \fluxmatrixEvenDim{\x}\cdot \rhsmatrixoo^{-1} \cdot\fluxmatrixOddDim{\z},\\
		\systemMatrixDomain_{\y\x} &= \fluxmatrixEvenDim{\y}\cdot \rhsmatrixoo^{-1} \cdot\fluxmatrixOddDim{\x}, &
		\systemMatrixDomain_{\y\y} &= \fluxmatrixEvenDim{\y}\cdot \rhsmatrixoo^{-1} \cdot\fluxmatrixOddDim{\y}, &
		\systemMatrixDomain_{\y\z} &= \fluxmatrixEvenDim{\y}\cdot \rhsmatrixoo^{-1} \cdot\fluxmatrixOddDim{\z},\\
		\systemMatrixDomain_{\z\x} &= \fluxmatrixEvenDim{\z}\cdot \rhsmatrixoo^{-1} \cdot\fluxmatrixOddDim{\x}, &
		\systemMatrixDomain_{\z\y} &= \fluxmatrixEvenDim{\z}\cdot \rhsmatrixoo^{-1} \cdot\fluxmatrixOddDim{\y}, &
		\systemMatrixDomain_{\z\z} &= \fluxmatrixEvenDim{\z}\cdot \rhsmatrixoo^{-1} \cdot\fluxmatrixOddDim{\z}
	\end{align*}
	and
	\begin{align*}
		\systemMatrixBoundary_l(\outernormal) &= \ints{(\outernormal\cdot\SC)\basisveceven\basisvecoddtransp}\boundaryMatrixOdd^{-1}\boundaryMatrixEven, \\
		\systemMatrixBoundary_r(\outernormal) &= \ints{(\outernormal\cdot\SC)\basisveceven\basisvecoddtransp}\boundaryMatrixOdd^{-1}.
	\end{align*}
	As demonstrated in the next section a system of this structure can be handed to standard PDE tools like, e.g., \fenics  \cite{logg2012automated, alnaes2015fenics}.
\end{remark}

\section{Numerical Results} 
\label{sec:Numerics}
The different test cases demonstrate the broad applicability of our approach to different scenarios including heterogeneous coefficients, anisotropic scattering, anisotropic boundary sources and different spatial dimensions. We reduced the computational complexity by looking at reduced problems in one and two space dimensions like described in Section \ref{sec:ReductionOfDimensionality}, whereas we provide code for the full 3D scenario as well.

\subsection{Code interface / Implementation details}

Our \matlab code for the evaluation of the real spherical harmonics is based on \cite{blanco1997evaluation}. We would like to note here that our implementation does not include the  Condon-Shortley phase (``$(-1)^m$ prefactor'') in contrary to, e.g.,  \matlab's legendre function. The (permuted version of the) $\PN$ flux matrices are given explicitly in  \cite{seibold2014starmap}.
We approximate the integrals over (subdomains of) the unit sphere $\sphere$ by a quadrature rule. This is based on a trigonometric Gaussian quadrature rule for polynomials on a circle, described, e.g., in \cite{fies2012trigonometric}. We employ the authors implementation of this quadrature rule in \matlab, provided in  \cite{trigaussRepo}.  For a detailed investigation in the reduced  2D case, see \cite{schneider2016moment}.  

In cases for which we do not know the reference solution of the kinetic problem or the original $\PN$ equations, we compare our result to the approximate solution of the discrete ordinates method. For a recent survey and relevant references, see \cite{larsen2010advances}. For our implementation of the discrete ordinates method in 2D we needed barycentric interpolation on the sphere like described in \cite{carfora2007interpolation}.

We  discretize the weak formulation of the $\SecPN$ systems using linear Lagrange finite elements with the help of \fenics.

\begin{table}
	\centering
	\begin{tabular}{lc}
		Software & Version\\
		\hline\\
		\matlab  \cite{matlab} & 9.5.0.944444 (R2018b) \\
		\matlab's Symbolic Math Toolbox & Version 8.2 (R2018b)\\
		\python \cite{python} & 3.6.7\\
		NumPy  \cite{oliphant2006guide}& 1.14.6\\
		SciPy \cite{scipy}& 1.1.0\\
		\fenics \cite{alnaes2015fenics}& 2018.1.0\\
		Gmsh  \cite{gmsh}& 3.0.6 
	\end{tabular}
	\caption{Used software / versions}
\end{table}

\subsection{Test  case 1}
\label{subsec:TestCase1}
\newcommand{\pathTestCaseOne}{../../codes/build/testCase1}
The first test case is rather simple and we are able to compute analytic reference solutions for the kinetic problem and the original $\PN$ equations, which allows us to validate our code in this setup, which is given in Table \ref{tab:TestCase1}. 
\begin{table}[h!]
	\centering
	\begin{tabular}{llll}
		$\distributionboundary(\z=0, \SCheight\geq0)$ =  $\frac{1}{4\pi}$ &
		$\reflectivity(\z=0) = 0$&
		$\absorpcoeff(\z) = 1 $&
		$\Domain = [0, 1]$ \\[1em]
		$\distributionboundary(\z=1, \SCheight<0) =  0$&
		$\reflectivity(\z=1) = 0$&
		$\scattercoeff(\z) = 0$&
		$\kernel(\SC, \SC') = \frac{1}{4\pi}$
	\end{tabular}
	\caption{Setup for test case 1}
	\label{tab:TestCase1}
\end{table}
It is easy to check that the analytic reference solution for the kinetic problem is given by
\begin{align}
	\label{eq:testCase1kinetic}
	\distribution(\spatialVariable, \SC) &= \distribution(\z, \SCheight) = \begin{cases}
																											 \frac{1}{4\pi} e^{-\frac{\absorpcoeff}{\SCheight} \z}&, \SCheight > 0\\
																											0 &, \SCheight \leq 0
																										\end{cases},\\
	\radEnergyKin(\z) &= 2\pi \int_{-1}^{1} \distribution(\z, \SCheight) \dup \SCheight.
\end{align}
In this simple case we can reformulate the $\PN$ system as an initial value problem. This gives us up to the precision of the solution of the corresponding ODE a reference solution, denoted by $\radEnergyPN$, for the solutions of the $\SecPN$ approach.

In Figure \ref{fig:testCase1} we present the results of a numerical study of this test case, where we compare different approximations of the radiative energy with the analytic reference solution, see Subfigure \ref{fig:testCase1solutions}. For the discrete ordinates method we use a quadrature rule on  the unit sphere which is  exact for polynomials up to degree 23 and obtain 50 discrete ordinates after reduction to 1D (by fixing $\SCangle$ and only discretizing $\SCheight$). Furthermore we look at the convergence of the radiative energies of the $\PN$ solutions to the one of the kinetic reference solution for increasing moment order $\momentorder$, see Subfigure \ref{fig:testCase1convN}, and the convergence of the radiative energies of the numerical approximations of the $\SecPN$ equations to the ones of the $\PN$ reference solutions, see Subfigure \ref{fig:testCase1convdx}.

From the analytic solution of the kinetic problem \eqref{eq:testCase1kinetic} we see that we would need infinitely many real spherical harmonics in the basis expansion to describe the true solution, which gives reason for the slow convergence. 

This test case indicates the convergence of the solutions of the $\PN$ method to the true solution of the kinetic problem and furthermore the equivalence of the solutions of the original $\PN$ equations and our second-order  formulation (in cases where the derivation is justified).

Referring to our repository on GitHub \cite{githubRepo}, we list the functions used to compute the different approximations of the distribution and radiative energy in Table \ref{tab:codeTestCase1}. 
\begin{table}[h!]
	\centering
	\begin{tabular}{lll}
		wrapper& &  \code{runTestCase1.m}\\
		kinetic reference solution & $\radEnergyKin$ & \code{radiativeEnergyKineticSolutionTestCase1.m}\\
		discrete ordinates method & $\radEnergyDOSolo$ & \code{mainDiscreteOrdinates1D.m}\\
		$\PN$ reference solution & $\radEnergyPN$ & \code{radiativeEnergyPNOrigIsotropicKernel1D.m}\\
		$\SecPN$ solution & $\radEnergySecPN$ & \code{runTestCase1.py}\\
		transformation $\PN\rightarrow \SecPN$ && \code{generateTestCase1.m}
	\end{tabular}
	\caption{Implementation of test case 1}\label{tab:codeTestCase1}
\end{table}

\pgfplotsset{scaled y ticks=false}
\begin{figure}[h!]
	\centering
	\begin{subfigure}[t]{0.31\textwidth}
		\externaltikz{testCase1_radEnergy}{
		\begin{tikzpicture}
			\begin{axis}[width=\textwidth,
								height=0.3\textheight, 
								legend style={nodes={scale=0.5, transform shape},  line width=0.5pt},
								xlabel=$\z$,
								title=radiative energy $\radEnergy$,
								ytick = {0.1, 0.15, 0.2, 0.25, 0.3, 0.35, 0.4, 0.45, 0.5},
								yticklabels={0.1,{},0.2,{},0.3,{},0.4,{}, 0.5}]
								
								\addplot[color=blue, line width=1pt, mark=*, mark repeat=300, mark phase=0] table [x=z, y=radEnergy, col sep=comma] {\pathTestCaseOne/kineticReference.csv};
								\addlegendentry{reference}	
								
								\addplot[color=red, line width=1pt, mark=square*, mark repeat=300, mark phase=100] table [x=z, y=radEnergy, col sep=comma] {\pathTestCaseOne/P1Ana.csv};
								\addlegendentry{$P_1$}
								
								\addplot[color=green, line width=1pt, mark=triangle*, mark repeat=300, mark phase=200] table [x=z, y=radEnergy, col sep=comma] {\pathTestCaseOne/P21Ana.csv};
								\addlegendentry{$P_{21}$}
								
								\foreach \i in {1,...,99}{
									\addplot[color=black, line width=1pt] table [x=cellLR\i, y=radEnergy\i, col sep=comma] {\pathTestCaseOne/discOrd.csv};
								}
								
								\addlegendentry{DOM}
			
			\end{axis}
		\end{tikzpicture}
	}
		\subcaption{Radiative energies $\radEnergy$ of the discrete ordinates solution, the analytic solution of the kinetic problem (reference) and the original $\PN[1]$ and $\PN[21]$ solutions.}
		\label{fig:testCase1solutions}
	\end{subfigure}
	\hfill
	\begin{subfigure}[t]{0.31\textwidth}
		\externaltikz{testCase1_relDiff}{
		\begin{tikzpicture}
		\begin{axis}[width=\textwidth,
							height=0.3\textheight, 
							legend style={nodes={scale=0.5, transform shape},  line width=0.5pt},
							xlabel=\momentorder,
							xtick = {1,3, 5, 7, 9, 11, 13, 15, 17, 19, 21},
							xticklabels={1,{},{},{},{},11,{},{},{},{},21},
							title=$\frac{||\radEnergyPN  - \radEnergyKin ||_{\infty} }{||\radEnergyKin ||_{\infty}}$,
							ymode=log]
							
							\addplot[color=blue, line width=1pt, only marks] table [x=ModelOrders, y=diffInfRelKin2PNAna, col sep=comma] {\pathTestCaseOne/diffInfRelKin2PNAna.csv};
		
		\end{axis}
		\end{tikzpicture}
	}
		\subcaption{Relative maximum distances of the radiative energies of the original $\PN$ solutions to the kinetic problem.}
		\label{fig:testCase1convN}
	\end{subfigure}
	\hfill
	\begin{subfigure}[t]{0.31\textwidth}
		\externaltikz{testCase1_FEniCS}{
		\begin{tikzpicture}
			\begin{axis}[width=\textwidth,
								height=0.3\textheight, 
								legend style={nodes={scale=0.5, transform shape},  line width=0.5pt},
								xlabel= \# grid points,
								title=$||\radEnergyPN - \radEnergySecPN ||_{\infty}$,
								cycle list name=color list, 
								ymin = 1e-8,
								ymax = 1e-2, 
								xmin =5, 
								xmax=900,
								ytick = {1e-2, 1e-3,1e-4, 1e-5, 1e-6, 1e-7, 1e-8},
								yticklabels={{}, 1e-3,{}, 1e-5, {}, 1e-7, {}},
								ymode=log,
								xmode=log]
			
								\addplot+[line width=1pt, color=red] table [x=nGridPoints, y=N1, col sep=comma]  {\pathTestCaseOne/diffInfPNAnalytic2FEniCS.csv};																										
								\addlegendentry{$N=1$}
								\addplot+[line width=1pt, color=green] table [x=nGridPoints, y=N21, col sep=comma] {\pathTestCaseOne/diffInfPNAnalytic2FEniCS.csv};
								\addlegendentry{$N=21$}
								\coordinate[below right,inner sep=0pt] (A) at (rel axis cs:0.267, 0.5);
								\coordinate[below right,inner sep=0pt] (B) at (rel axis cs:0.7104, 0.1667);
								\coordinate[below right,inner sep=0pt] (C) at (rel axis cs:0.267, 0.1667);
								\draw[line width=1pt, dashed] (A) -- (C) node [midway, left, fill=white] {2};
								\draw[line width=1pt, dashed] (C) -- (B) node [midway, below, fill=white] {1};
								\draw[line width=1pt, ] (A) -- (B);
			
			\end{axis}
		\end{tikzpicture}
	}
		\subcaption{Maximum distances of the original $\PN$ solutions to the numerical solutions of the $\SecPN$ equations.}
		\label{fig:testCase1convdx}
	\end{subfigure}
\caption{Test case 1}
\label{fig:testCase1}
\end{figure}
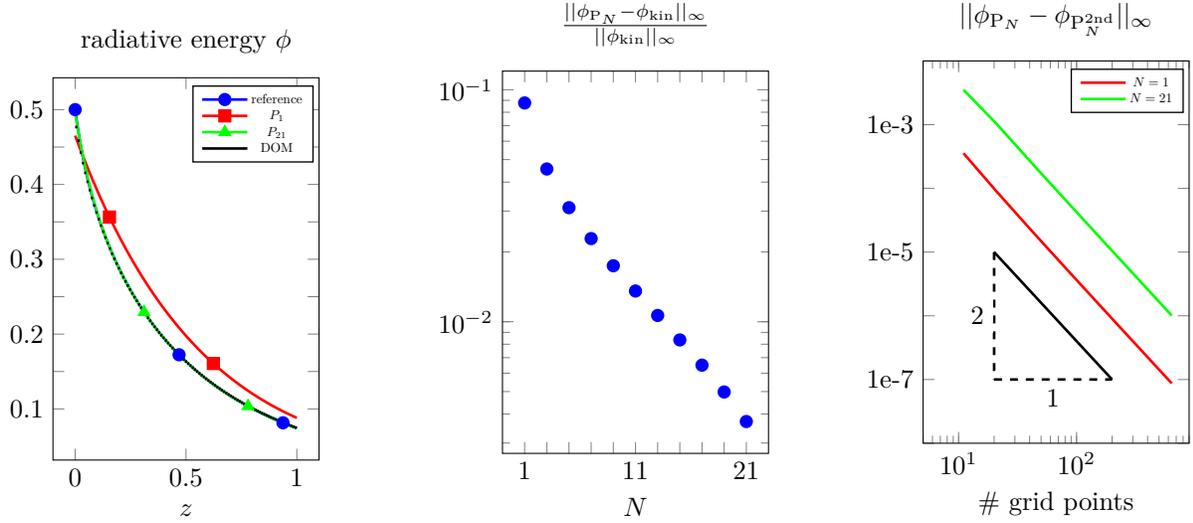

\subsection{Test case 2}
\newcommand{\pathTestCaseTwo}{../../codes/build/testCase2}
The second test case demonstrates that we are able to treat heterogeneous coefficients, non-vanishing reflectivity at the boundary  and anisotropic boundary sources in 1D.
The setup for this test case in 1D is given in Table \ref{tab:TestCase2}. 
\begin{table}[h!]
	\centering
	\begin{tabular}{llll}
		$\distributionboundary(\z=0, \SCheight\geq 0) =  \SCheight^2$ &
		$\reflectivity(\z=0) = \frac{1}{2}$ &
		$\absorpcoeff(\z) = \frac{2 + \sin(2\pi \z)}{10}$&
		$\Domain = [0, 1]$\\[1em]
		$\distributionboundary(\z=1, \SCheight<0) =  0$&
		$\reflectivity(\z=1) = \frac{1}{2}$ &
	 	$\scattercoeff(\z) = \frac{3-\z^2}{10}$&
	 	$\kernel(\SC, \SC') = \frac{1}{4\pi}$
	\end{tabular}
	\caption{Setup for test case 2}
	\label{tab:TestCase2}
\end{table}
For this test case we are again able to compute a reference solution of the original $\PN$ system by reformulating this as initial value problem.

In Figure \ref{fig:testCase2} we present the results of a numerical study of this test case, where we compare different approximations of the radiative energy with the approximation by the discrete ordinates method as reference solution, see Subfigure \ref{fig:testCase2solutions}. For the discrete ordinates method we used a quadrature rule on the unit sphere which is exact for polynomials up to degree 23  and obtain 50 ordinates after reduction to 1D (by fixing $\SCangle$ and only discretizing $\SCheight$).  Furthermore we look at the convergence of the radiative energies of the $\SecPN$ solutions to the one of the discrete ordinates method for increasing moment order $\momentorder$, see Subfigure \ref{fig:testCase2convN}, and the convergence of the radiative energies of the numerical approximations of the $\SecPN$ equations to the ones of the $\PN$ reference solutions, see Subfigure \ref{fig:testCase2convdx}.

We observe a significant jump already between the first two moment orders. 

Referring to our repository on GitHub \cite{githubRepo},  we list the functions used to compute the different approximations  of the distribution and radiative energy in Table \ref{tab:codeTestCase2}. 
\begin{table}[h!]
	\centering
	\begin{tabular}{lll}
		wrapper& &  \code{runTestCase2.m}\\
		discrete ordinates method & $\radEnergyDOSolo$ & \code{mainDiscreteOrdinates1D.m}\\
		$\PN$ reference solution & $\radEnergyPN$ & \code{radiativeEnergyPNOrigIsotropicKernel1D.m}\\
		$\SecPN$ solution & $\radEnergySecPN$ & \code{runTestCase2.py}\\
		transformation $\PN\rightarrow \SecPN$ && \code{generateTestCase2.m}
	\end{tabular}
	\caption{Implementation of test case 2}\label{tab:codeTestCase2}
\end{table}

\pgfplotsset{scaled y ticks=false}
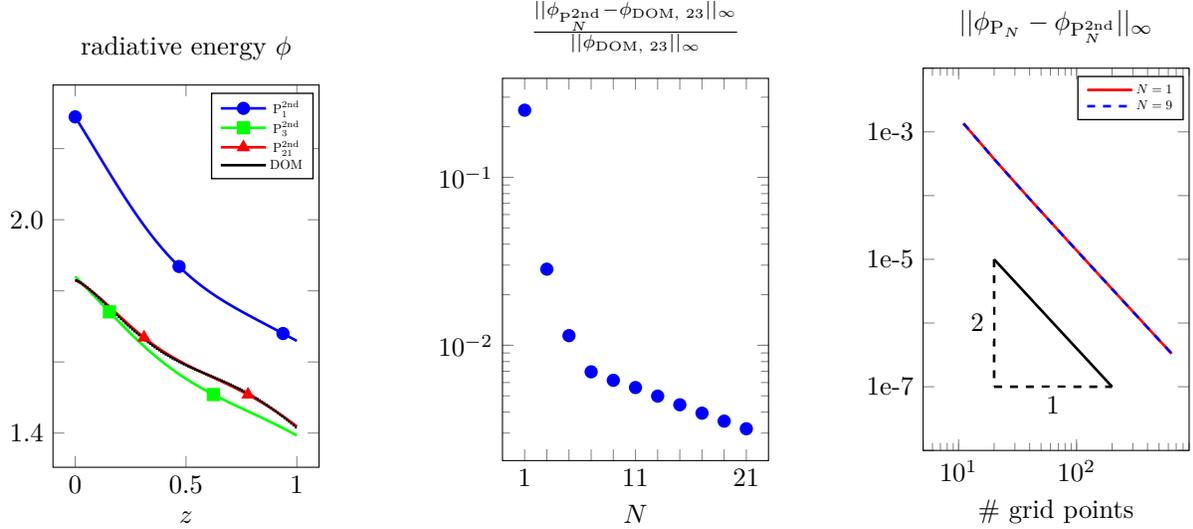
\begin{figure}[h!]
	\centering
	
	\begin{subfigure}[t]{0.31\textwidth}
		\externaltikz{testCase2_radEnergy}{
			\begin{tikzpicture}
			\begin{axis}[
				width=\textwidth,
				height=0.3\textheight, 
				legend style={nodes={scale=0.5, transform shape},  line width=0.5pt},
				xlabel=$\z$,
				title=radiative energy $\radEnergy$,
				ytick={1.4, 1.6, 1.8, 2.0, 2.2, 2.4, 2.6},
				yticklabels={1.4, {},{},2.0,{},{},2.6}]
				
				\addplot[color=blue, line width=1pt,mark=*,mark repeat=300, mark phase=0] table [x=z, y=N1, col sep=comma] {\pathTestCaseTwo/PN2nd.csv};
				\addlegendentry{$\SecPN[1]$}
				\addplot[color=green, line width=1pt, mark=square*,mark repeat=300, mark phase=100] table [x=z, y=N3, col sep=comma] {\pathTestCaseTwo/PN2nd.csv};
				\addlegendentry{$\SecPN[3]$}
				\addplot[color=red, line width=1pt, mark=triangle*,mark repeat=300, mark phase=200] table [x=z, y=N21, col sep=comma] {\pathTestCaseTwo/PN2nd.csv};
				\addlegendentry{$\SecPN[21]$}
				
				\foreach \i in {1,...,99}{
					\addplot[color=black, line width=1pt] table [x=cellLR\i, y=radEnergy\i, col sep=comma] {\pathTestCaseTwo/discOrd.csv};
				}
				
				\addlegendentry{DOM}
				
			\end{axis}
			\end{tikzpicture}
		}
		\subcaption{Radiative energies $\radEnergy$ of the discrete ordinates solution and the numerical solutions of the  $\SecPN[1]$, $\SecPN[3]$ and $\SecPN[21]$ equations.}
		\label{fig:testCase2solutions}
	\end{subfigure}
	\hfill
	\begin{subfigure}[t]{0.31\textwidth}
		\externaltikz{testCase2_relDiff}{
			\begin{tikzpicture}
				\begin{axis}[ 
					width=\textwidth,
					height=0.3\textheight, 
					legend style={nodes={scale=0.5, transform shape},  line width=0.5pt},
					xlabel=$\momentorder$,
					title=$\frac{||  \radEnergySecPN  - \radEnergyDO[23] ||_{\infty}}{||  \radEnergyDO[23] ||_{\infty}}$,
					xtick = {1,3, 5, 7, 9, 11, 13, 15, 17, 19, 21},
					xticklabels={1,{},{},{},{},11,{},{},{},{},21},
					ymode=log,
					]
					\addplot[color=blue, line width=1pt, only marks] table [x=ModelOrders, y=diffInfRelDO2PN2nd, col sep=comma] {\pathTestCaseTwo/diffInfRelDO2PN2nd.csv};
				\end{axis}
			\end{tikzpicture}
		}
		\subcaption{Relative maximum distances of the radiative energies of the discrete ordinates solution to the numerical solutions of $\SecPN$ equations.}
		\label{fig:testCase2convN}
	\end{subfigure}
	\hfill
	\begin{subfigure}[t]{0.31\textwidth}
		\externaltikz{testCase2_FEniCS}{
			\begin{tikzpicture}
				\begin{axis}[
					width=\textwidth,
					height=0.3\textheight, 
					legend style={nodes={scale=0.5, transform shape},  line width=0.5pt},
					xlabel= \# grid points,
					title=$||\radEnergyPN - \radEnergySecPN ||_{\infty}$,
					cycle list name=color list, 
					ymin = 1e-8,
					ymax = 1e-2, 
					xmin =5, 
					xmax=900,
					ytick = {1e-2, 1e-3,1e-4, 1e-5, 1e-6, 1e-7, 1e-8},
					yticklabels={{}, 1e-3,{}, 1e-5, {}, 1e-7, {}},
					ymode=log,
					xmode=log
				]
					\addplot+[line width=1pt] table [x=nGridPoints, y=N1, col sep=comma] {\pathTestCaseTwo/diffInfPNOrig2PN2nd.csv};																										
					\addlegendentry{$\momentorder=1$}
					\addplot+[dashed, line width=1pt] table [x=nGridPoints, y=N9, col sep=comma] {\pathTestCaseTwo/diffInfPNOrig2PN2nd.csv};
					\addlegendentry{$\momentorder=9$}
					\coordinate[below right,inner sep=0pt] (A) at (rel axis cs:0.267, 0.5);
					\coordinate[below right,inner sep=0pt] (B) at (rel axis cs:0.7104, 0.1667);
					\coordinate[below right,inner sep=0pt] (C) at (rel axis cs:0.267, 0.1667);
					\draw[line width=1pt, dashed] (A) -- (C) node [midway, left, fill=white] {2};
					\draw[line width=1pt, dashed] (C) -- (B) node [midway, below, fill=white] {1};
					\draw[line width=1pt, ] (A) -- (B);
				
				\end{axis}
			\end{tikzpicture}
		}
		\subcaption{Maximum distances of the solutions of the original $\PN$ equations to the numerical solutions of $\SecPN$ equations.}
		\label{fig:testCase2convdx}
	\end{subfigure}
	
	\caption{Test case 2}
	\label{fig:testCase2}
\end{figure}

\subsection{Test case 3}
\newcommand{\pathTestCaseThree}{../../codes/build/testCase3}
The third test case demonstrates that our method is not limited to isotropic scattering.
The setup for this test case in 1D is given in Table \ref{tab:TestCase3}. 
\begin{table}[h!]
	\centering
	\begin{tabular}{llll}
		$\distributionboundary(\z=0, \SCheight\geq 0)$ =  $\SCheight + 2$ &
		$\reflectivity(\z=0) = 0$&
		$\absorpcoeff(\z) = 0$&
		$\Domain = [0, 1]$\\[1em]
		$\distributionboundary(\z=1, \SCheight< 0) =  \SCheight + 1$ &
		$\reflectivity(\z=1) = 0$ &
		$\scattercoeff(\z) = 1 + z$ &
		$\kernel(\SC, \SC') = \frac{1}{8\pi} (\left(\SCheight - 1\right)\left(\SCheight'-1\right) + \left(\SCheight+1\right)\left(\SCheight'+1\right)$\\
	\end{tabular}
	\caption{Setup for test case 3}
	\label{tab:TestCase3}
\end{table}
We are able to compute an analytic reference solution for the kinetic problem, which reads:
\begin{align}
	\label{eq:anaKintest3}
	\distribution(\spatialVariable, \SC) &= \SCheight - \frac{\z(\z + 2)}{3} + 2.
\end{align}
We see that the analytic solution is a first-order polynomial in $\SC$, thus we expect that the solution of the $\PN[1]$ equations should give the exact solution of the kinetic problem. 

In Figure \ref{fig:testCase3} we present the results of a numerical study of this test case, where we compare different approximations of the radiative energy with the analytic reference solution, see Subfigure \ref{fig:testCase3solutions}. For the discrete ordinates method we use a quadrature rule on the unit sphere which is exact for polynomials up to degree 23, where we did not reduce the set of ordinates in this case and ended up with 600 discrete ordinates on the full sphere.  Furthermore we look at the convergence of the radiative energies of the numerical approximations of the $\SecPN$ by comparing the solution on a certain grid to the one on a refined grid, see Subfigure \ref{fig:testCase3conv}.

Referring to our repository on GitHub \cite{githubRepo}, we list the functions used to compute the different approximations  of the distribution and radiative energy in Table \ref{tab:codeTestCase3}. 
\begin{table}[h!]
	\centering
	\begin{tabular}{lll}
		wrapper& &  \code{runTestCase3.m}\\
		kinetic reference solution & $\radEnergyKin$ & \code{radiativeEnergyKineticSolutionTestCase3.m}\\
		discrete ordinates method & $\radEnergyDOSolo$ & \code{mainDiscreteOrdinates1D.m}\\
		$\SecPN$ solution & $\radEnergySecPN$ & \code{runTestCase3.py}\\
		transformation $\PN\rightarrow \SecPN$ && \code{generateTestCase3.m}
	\end{tabular}
	\caption{Implementation of test case 3}\label{tab:codeTestCase3}
\end{table}

\pgfplotsset{scaled y ticks=false}
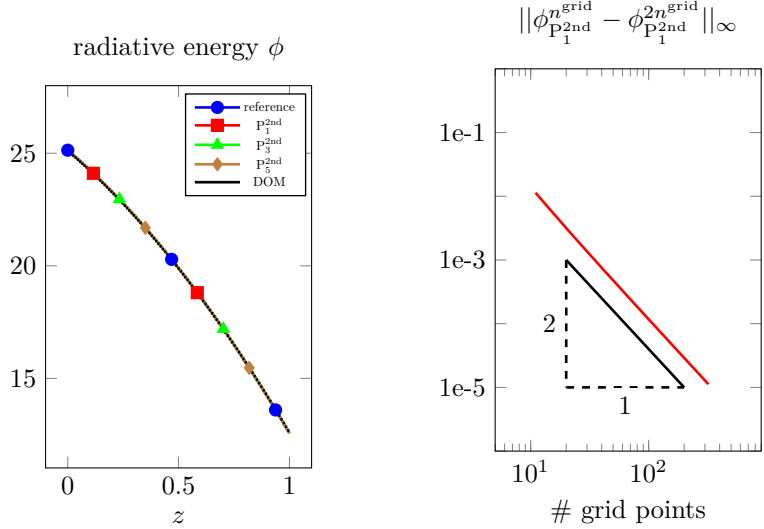
\begin{figure}[h!]
	\centering
	
	\begin{subfigure}[t]{0.31\textwidth}
		\externaltikz{testCase3_radEnergy}{
			\begin{tikzpicture}
				\begin{axis}[
					width=\textwidth,
					height=0.3\textheight, 
					legend style={nodes={scale=0.5, transform shape},  line width=0.5pt},
					xlabel=$\z$,
					title=radiative energy $\radEnergy$, 
					ymax=28, 
					ytick = {5,10,15,20,25},
					]
					\addplot[color=blue, line width=1pt, mark=*, mark repeat=300, mark phase=0] table [x=z, y=radEnergy, col sep=comma] {\pathTestCaseThree/kineticReference.csv};
					\addlegendentry{reference}	
					
					\addplot[color=red, line width=1pt, mark=square*, mark repeat=300, mark phase=75] table [x=z, y=N1, col sep=comma] {\pathTestCaseThree/PN2nd.csv};
					\addlegendentry{$\SecPN[1]$}
					
					\addplot[color=green, line width=1pt, mark=triangle*, mark repeat=300, mark phase=150] table [x=z, y=N3, col sep=comma] {\pathTestCaseThree/PN2nd.csv};
					\addlegendentry{$\SecPN[3]$}
					
					\addplot[color=brown, line width=1pt, mark=diamond*, mark repeat=300, mark phase=225] table [x=z, y=N5, col sep=comma] {\pathTestCaseThree/PN2nd.csv};
					\addlegendentry{$\SecPN[5]$}			
					
					\foreach \i in {1,...,99}{
						\addplot[color=black, line width=1pt] table [x=cellLR\i, y=radEnergy\i, col sep=comma] {\pathTestCaseThree/discOrd.csv};
					}
					
					\addlegendentry{DOM}
					
				\end{axis}
			\end{tikzpicture}
		}
		\subcaption{Radiative energies $\radEnergy$ of the discrete ordinates solution, the analytic solution of the kinetic problem (reference) and the numerical solutions of the  $\SecPN[1]$, $\SecPN[3]$ and $\SecPN[5]$ equations.}
		\label{fig:testCase3solutions}
	\end{subfigure}
	\hspace{1em}
	\begin{subfigure}[t]{0.31\textwidth}
		\externaltikz{testCase3_FEniCS}{
			\begin{tikzpicture}
				\begin{axis}[
					width=\textwidth,
					height=0.3\textheight, 
					legend style={nodes={scale=0.5, transform shape},  line width=0.5pt},
					xlabel= \# grid points,
					title=$||\radEnergy_{\SecPN[1]}^{n^{\text{grid}}} - \radEnergy_{\SecPN[1]}^{2n^{\text{grid}}}||_{\infty}$,
					cycle list name=color list, 
					ymin = 1e-6,
					ymax = 1e-0, 
					xmin =5, 
					xmax=900,
					ytick = {1e-0, 1e-1, 1e-2, 1e-3, 1e-4, 1e-5, 1e-6},
					yticklabels={{}, 1e-1, {}, 1e-3,{}, 1e-5, {}},
					ymode=log,
					xmode=log, 
					]
					\addplot+[line width=1pt] table [x=nGridPoints, y=numConvErrorP1_2nd, col sep=comma] {\pathTestCaseThree/numConvErrorsP1_2nd.csv};																										
					
					\coordinate[below right,inner sep=0pt] (A) at (rel axis cs:0.267, 0.5);
					\coordinate[below right,inner sep=0pt] (B) at (rel axis cs:0.7104, 0.1667);
					\coordinate[below right,inner sep=0pt] (C) at (rel axis cs:0.267, 0.1667);
					\draw[line width=1pt, dashed] (A) -- (C) node [midway, left, fill=white] {2};
					\draw[line width=1pt, dashed] (C) -- (B) node [midway, below, fill=white] {1};
					\draw[line width=1pt, ] (A) -- (B);
				\end{axis}
			\end{tikzpicture}
		}
		\subcaption{Maximum distances of the numerical solutions of the $\SecPN[1]$ equations on equidistant grids with $n^{\text{grid}}$ and $2n^{\text{grid}}$ nodes.}
		\label{fig:testCase3conv}
	\end{subfigure}
	
	\caption{Test case 3}
	\label{fig:testCase3}
\end{figure}

\def\localpath{./Matlab2Latex/}
\subsection{Test case 4}
\newcommand{\pathTestCaseFour}{../../codes/build/testCase4}

This test case demonstrates that our method can deal with heterogeneous coefficients in 2D. It is based on the shadow test \cite{chidyagwai2018comparative, schneider2019firstorder} which represents  a particle stream that is partially blocked by an absorber, resulting in a shadowed region behind the absorber.
The setup for this test case in 2D is given in Table \ref{tab:TestCase4}, based on the auxiliary functions in Equation \eqref{eq:boxdef}. The domain $\Domain$ and the partition of the boundary $\domainboundary = \domainboundary_{\RomanNumeralCaps{1}} \cup  \domainboundary_{\RomanNumeralCaps{2}} \cup  \domainboundary_{\RomanNumeralCaps{3}} \cup  \domainboundary_{\RomanNumeralCaps{4}}$  are illustrated in Figure \ref{fig:meshTestCase4}.

\begin{subequations}
	\label{eq:boxdef}
\begin{align}
	f_1\left(\spacevar\right) &= \begin{cases}
		\exp(-100(x - 0.6)^2) &, x \leq 0.6  \\
			1      &, x >0.6
		\end{cases},\\
	f_2\left(\spacevar\right) &= \begin{cases}
		\exp(-100(x - 0.7)^2) &, x \geq 0.7  \\
		1      &, x >0.7
		\end{cases},\\
	f_3\left(\spacevar\right) &= \begin{cases}
		\exp(-100(y - 0.4)^2) &, y \geq 0.4  \\
			1      &, y >0.4
		\end{cases}.
\end{align} 
\end{subequations}

\begin{table}[h!]
	\centering
	\begin{tabular}{llll}
		$\Domain$ = $[0,3]\times [0,1]$&
		$\kernel(\SC, \SC')$ = $\frac{1}{4\pi}$&
		$\absorpcoeff$ = $100\cdot f_1\cdot f_2\cdot f_3$&
		$\scattercoeff$ = $\frac{1}{100}$\\[1em]
		$\reflectivity(\spatialVariable \in  \domainboundary_{\RomanNumeralCaps{1}}) = 0$&
		$\reflectivity(\spatialVariable \in \domainboundary_{\RomanNumeralCaps{2}}) = \frac{1}{2}$&
		$\reflectivity(\spatialVariable \in \domainboundary_{\RomanNumeralCaps{3}}) = 0$&
		$\reflectivity(\spatialVariable \in \domainboundary_{\RomanNumeralCaps{4}}) = \frac{1}{2}$\\[1em]
		$\distributionboundary(\spatialVariable \in \domainboundary_{\RomanNumeralCaps{1}}, \SC)$ =  $0$&
		$\distributionboundary(\spatialVariable \in \domainboundary_{\RomanNumeralCaps{2}}, \SC)$ =  $0$ &
		$\distributionboundary(\spatialVariable \in \domainboundary_{\RomanNumeralCaps{3}}, \SC)$ =  $\frac{1}{4\pi}$ &
		$\distributionboundary(\spatialVariable \in \domainboundary_{\RomanNumeralCaps{4}}, \SC)$ =  $0$	
	\end{tabular}
	\caption{Setup for test case 4}
	\label{tab:TestCase4}
\end{table}

For the spatial discretization we use a triangular mesh with 2699 nodes and 5252 elements. We refine the mesh by splitting \cite{gmsh} and obtain a mesh with 10649 nodes and 21008 elements for numerical reference solutions of the corresponding models, denoted by  $\SecPNRefined$.
As reference we compute the discrete ordinates solution on the coarse mesh. As before we use for the discretization of the unit sphere a Gaussian-like quadrature rule which is exact for polynomials up to degree 23 and leaves us with 600 ordinates on the upper half sphere after reduction to 2D.  We compare this solution to the discrete ordinates solution with a quadrature rule exact up to degree 15 with 272 ordinates on the upper half sphere. 

In Figure \ref{fig:testCase4} we show the radiative energies of the  discrete ordinates solution and the  $\SecPN[1], \SecPN[3], \SecPN[5]$ solutions.
In Table \ref{tab:errorsTestCase4} we show the relative $L^2$ distances of the radiative energies of the $\SecPN$ solutions on the coarse grid to the corresponding reference solutions on the refined grid and the discrete ordinates solution.

We see that the distance of the radiative energies of the $\SecPN$ solutions to the discrete ordinate solutions decreases, but it seems that a much larger moment order $N$ would be necessary to get a satisfying approximation. Furthermore we would like to note that the presented discrete ordinates solution is only to a certain extent suitable as a reference solution, as its relative $L^2$ distance to the solution with around half as many discrete ordinates is about 4\%.

Referring to our repository on GitHub \cite{githubRepo}, we list the functions used to compute the different approximations of the distribution and radiative energy in Table \ref{tab:codeTestCase4}. 
\begin{table}[h!]
	\centering
	\begin{tabular}{lll}
		wrapper& &  \code{runTestCase4.m}\\
		mesh generation and refinement && \code{genMeshTestCase4.sh}\\
		discrete ordinates method & $\radEnergyDOSolo$ & \code{runDiscreteOrdinatesTestCase4.m}\\
		$\SecPN$ solution & $\radEnergySecPN$ & \code{runTestCase4.py}\\
		transformation $\PN\rightarrow \SecPN$ && \code{generateTestCase4.m}
	\end{tabular}
	\caption{Implementation of test case 4}\label{tab:codeTestCase4}
\end{table}

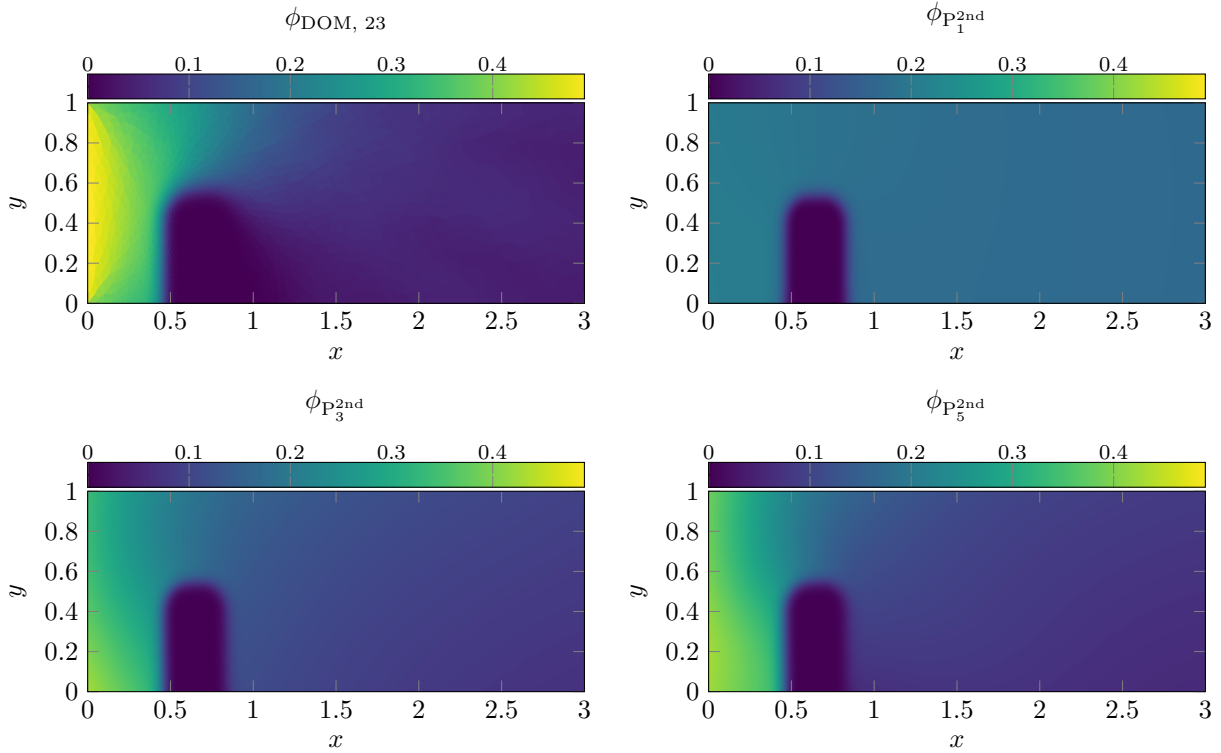
\begin{figure}
	\externaltikz{testCase4}{
	\begin{tikzpicture}
	\begin{groupplot}[
			group style={group size=2 by 2, horizontal sep = 0.1\textwidth,  vertical sep = 2.5cm},
			width = 0.4\textwidth,
			height = 0.12\textheight,
			xlabel={$\x$},
			ylabel={$\y$},
			xmin=0.000000,
			xmax=3.000000,
			ymin=0.000000,
			ymax=1.000000,
			axis on top,
			scale only axis,
			enlargelimits=false,
			colorbar horizontal,
			title style = {yshift = 0.6cm} ,
			scaled x ticks=false,
			colorbar style={,
				at={(0,1.02)},
				anchor=south west,
				height=0.02\textwidth,
				width = 0.4\textwidth,
				xticklabel style={font=\footnotesize,anchor=south, /pgf/number format/.cd, fixed,precision = 3,/tikz/.cd},
				xticklabel shift = -7pt,
				scaled x ticks=false
				},
			point meta min = -0.000209,
			point meta max = 0.490643,
			]
			
		\nextgroupplot[					
			title={$\radEnergyDO[23]$},
			]
			\addplot graphics[xmin=0.000000,xmax=3.000000,ymin=0.000000,ymax=1.000000] {\localpath data/testCase4_DO23_1.png};
			
		\nextgroupplot[					
				title={$\radEnergy_{\SecPN[1]}$},
				]
				\addplot graphics[xmin=0.000000,xmax=3.000000,ymin=0.000000,ymax=1.000000] {\localpath data/testCase4_P1_2ndCoarse_1.png};
		
		\nextgroupplot[					
			title={$\radEnergy_{\SecPN[3]}$},
			]
			\addplot graphics[xmin=0.000000,xmax=3.000000,ymin=0.000000,ymax=1.000000] {\localpath data/testCase4_P3_2ndCoarse_1.png};
		
		\nextgroupplot[					
			title={$\radEnergy_{\SecPN[5]}$},
		]
		\addplot graphics[xmin=0.000000,xmax=3.000000,ymin=0.000000,ymax=1.000000] {\localpath data/testCase4_P5_2ndCoarse_1.png};

	\end{groupplot}
	\end{tikzpicture}
	}
	
	\caption{Test case 4}
	\label{fig:testCase4}
\end{figure}

\begin{table}
	\centering
	\begin{tabular}{ccc}
		$N$& $\frac{ || \radEnergy_{\SecPN}- \radEnergy_{\SecPNRefined} ||_{L^2(\Domain)}   }{|| \radEnergy_{\SecPNRefined}  ||_{L^2(\Domain)} }$  & $\frac{ || \radEnergy - \radEnergyDO[23]||_{L^2(\Domain)}  }{|| \radEnergyDO[23] ||_{L^2(\Domain)} }$ \\[1.2em] \hline\\[0.2em]
		1&  2.03e-03&   7.18e-01\\
		3&  2.39e-03&   3.41e-01\\
		5&  1.01e-02&   2.47e-01\\
		7&  1.40e-02&   1.91e-01\\
	\end{tabular}
	\caption{Relative differences test case 4, with $\frac{ || \radEnergyDO[15] - \radEnergyDO[23]||_{L^2(\Domain)}  }{|| \radEnergyDO[23] ||_{L^2(\Domain)} } =$ 3.61e-02}
	\label{tab:errorsTestCase4}
\end{table}

\begin{figure}
	\centering
	\begin{subfigure}[t]{0.3\textwidth}
		\centering
		\externaltikz{testCase4_mesh}{
		\begin{tikzpicture}
		\begin{axis}[
				width = 0.9\textwidth,
				height = 0.3\textwidth,
				xmin=0.000000,
				xmax=3.000000,
				ymin=0.000000,
				ymax=1.000000,
				axis on top,
				scale only axis,
				enlargelimits=false,
				title style = {yshift = 0.6cm} ,
				scaled x ticks=false,
				clip=false,
				ytick distance=1,
				axis line style={draw=none}
			]
				\addplot graphics[xmin=0.000000,xmax=3.000000,ymin=0.000000,ymax=1.000000] {\localpath data/testCase4_mesh0.png};
				
				\coordinate (a) at (0,0);
				\coordinate (b) at (3,0);
				\coordinate (c) at (3, 1);
				\coordinate (d) at (0, 1);
				
				\node (ab) at (1.5, -0.2) { \textcolor{blue}{$\domainboundary_{\RomanNumeralCaps{4}}$}};
				\draw[line cap=round, blue, line width=1mm, ] (a) -- (b); 
				\node (bc) at (3.2, 0.5) { \textcolor{red}{$\domainboundary_{\RomanNumeralCaps{1}}$}};
				\draw[line cap=round, red, line width=1mm] (b) -- (c); 
				\node (cd) at (1.5, 1.2) { \textcolor{black}{$\domainboundary_{\RomanNumeralCaps{2}}$}};
				\draw[line cap=round, black, line width=1mm] (c) -- (d);
				\node (cd) at (-0.3, 0.5) {\textcolor{green}{$\domainboundary_{\RomanNumeralCaps{3}}$}};
				\draw[line cap=round, green, line width=1mm] (d) -- (a); 
		\end{axis}
	
		\end{tikzpicture}
	}
	\subcaption{Test case 4, mesh with 2699 nodes and 5252 elements}
	\label{fig:meshTestCase4}
	\end{subfigure}
\hfill
	\begin{subfigure}[t]{0.3\textwidth}
		\centering
	\externaltikz{testCase5_mesh}{
		\begin{tikzpicture}
		\begin{axis}[
		width = 0.3\textwidth,
		height = 0.3\textwidth,
		xmin=0.000000,
		xmax=1.000000,
		ymin=0.000000,
		ymax=1.000000,
		axis on top,
		scale only axis,
		enlargelimits=false,
		title style = {yshift = 0.6cm} ,
		scaled x ticks=false,
		clip=false,
		ytick distance=1,
		xtick distance=1,
		axis line style={draw=none}
		]
		\addplot graphics[xmin=0.000000,xmax=1.000000,ymin=0.000000,ymax=1.000000] {\localpath data/testCase5_mesh0.png};
		
		\coordinate (a) at (0,0);
		\coordinate (b) at (1,0);
		\coordinate (c) at (1, 1);
		\coordinate (d) at (0, 1);
		
		\node (ab) at (0.5, -0.2) { \textcolor{blue}{$\domainboundary_{\RomanNumeralCaps{4}}$}};
		\draw[line cap=round, blue, line width=1mm, ] (a) -- (b); 
		\node (bc) at (1.2, 0.5) { \textcolor{red}{$\domainboundary_{\RomanNumeralCaps{1}}$}};
		\draw[line cap=round, red, line width=1mm] (b) -- (c); 
		\node (cd) at (0.5, 1.2) { \textcolor{black}{$\domainboundary_{\RomanNumeralCaps{2}}$}};
		\draw[line cap=round, black, line width=1mm] (c) -- (d);
		\node (cd) at (-0.3, 0.5) {\textcolor{green}{$\domainboundary_{\RomanNumeralCaps{3}}$}};
		\draw[line cap=round, green, line width=1mm] (d) -- (a); 
		\end{axis}
		
		\end{tikzpicture}
	}
	\subcaption{Test case 5, mesh with 2212 nodes and 4262 elements}
	\label{fig:meshTestCase5}
\end{subfigure}
\hfill
	\begin{subfigure}[t]{0.3\textwidth}
	\centering
	\externaltikz{testCase6_mesh}{
		\begin{tikzpicture}
		\begin{axis}[
		width = 0.45\textwidth,
		height = 0.45\textwidth,
		xmin=0.000000,
		xmax=1.500000,
		ymin=0.000000,
		ymax=1.500000,
		axis on top,
		scale only axis,
		enlargelimits=false,
		title style = {yshift = 0.6cm} ,
		scaled x ticks=false,
		clip=false,
		ytick distance=1,
		xtick distance=1,
		on layer=background
		]
		\addplot graphics[xmin=0.000000,xmax=1.500000,ymin=0.000000,ymax=1.500000] {\localpath data/testCase6_mesh0.png};
		
		\coordinate (a) at (0,0);
		\coordinate (b) at (1.5,1.5);
		\coordinate (c) at (1, 1.5);
		\coordinate (d) at (0, 0.5);
		
		\node (ab) at (1.3, 0.3) { \textcolor{red}{$\domainboundary_{\RomanNumeralCaps{1}}$}};
		\draw[line cap=round, red, line width=1mm, ] (a) arc (270:360:1.5);
		\node (bc) at (1.25, 1.7) { \textcolor{black}{$\domainboundary_{\RomanNumeralCaps{2}}$}};
		\draw[line cap=round, black, line width=1mm] (b) -- (c); 
		\node (cd) at (0.6, 1.2) { \textcolor{green}{$\domainboundary_{\RomanNumeralCaps{3}}$}};
		\draw[line cap=round, green, line width=1mm, ] (d) arc (270:360:1.0);
		\node (da) at (-0.3, 0.25) {\textcolor{blue}{$\domainboundary_{\RomanNumeralCaps{4}}$}};
		\draw[line cap=round, blue, line width=1mm] (d) -- (a); 
		\end{axis}
		
		\end{tikzpicture}
	}
	\subcaption{Test case 6, mesh with 2530 nodes and 4847 elements} 
	\label{fig:meshTestCase6}
\end{subfigure}
	\caption{Meshes and boundary conditions}
	\label{fig:meshAndBoundaries}
\end{figure}
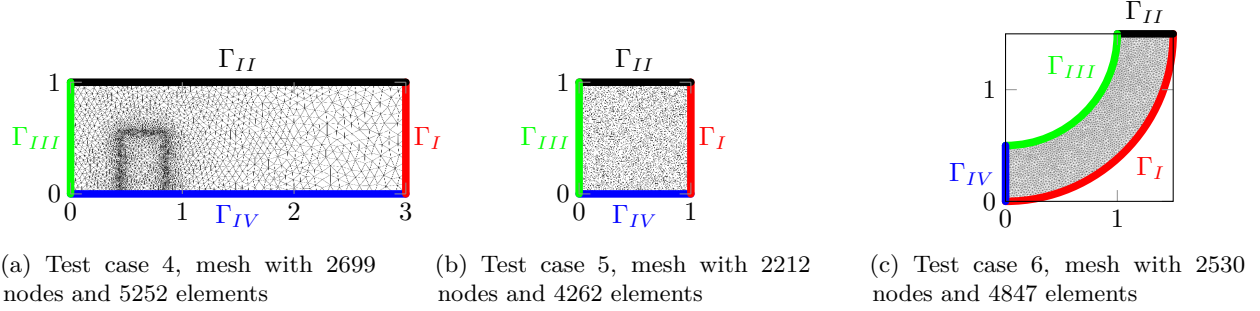

\subsection{Test case 5}
\newcommand{\pathTestCaseFive}{../../codes/build/testCase5}
The fifth test case considers the popular Henyey Greenstein scattering kernel. This especially demonstrates that our method is not limited to isotropic scattering.
For the spatial discretization we use a triangular mesh with 2212 nodes and 4262 elements. We refine the mesh by splitting \cite{gmsh} and obtain a mesh with 8685 nodes and 17048 elements for numerical reference solutions of the corresponding models, denoted by  $\SecPNRefined$.
As reference we compute the discrete ordinates solution on the coarse mesh. As before we use for the discretization of the unit sphere a Gaussian-like quadrature rule which is exact for polynomials up to degree 23 and leaves us with 600 ordinates on the full unit sphere.  We compare this solution to the discrete ordinates solution with a quadrature rule exact up to degree 15 with 272 ordinates on the full unit sphere. 

The setup for this test case in 2D is given in Table \ref{tab:TestCase5}. The domain $\Domain$ and the partition of the boundary $\domainboundary = \domainboundary_{\RomanNumeralCaps{1}} \cup  \domainboundary_{\RomanNumeralCaps{2}} \cup  \domainboundary_{\RomanNumeralCaps{3}} \cup  \domainboundary_{\RomanNumeralCaps{4}}$  are illustrated in Figure \ref{fig:meshTestCase5}.
We choose the anisotropy factor in the Henyey-Greenstein kernel $g=0.5$.
\begin{table}[h!]
	\centering
	\begin{tabular}{llll}
		$\Domain$ = $[0,1]^2$&
		$\kernel(\SC, \SC') = \frac{1}{4\pi}\frac{1-\anisotropy^2}{\left(1+\anisotropy^2 - 2\anisotropy\cos(\SC^T\SC')\right)^{\nicefrac{3}{2}}}$&
		$\absorpcoeff = 0$ &
		 $\scattercoeff= 1$\\[1em]
		$\reflectivity(\spatialVariable \in \domainboundary_{\RomanNumeralCaps{1}}) = 0$&
		$\reflectivity(\spatialVariable \in \domainboundary_{\RomanNumeralCaps{2}}) = 0.99$&
		$\reflectivity(\spatialVariable \in \domainboundary_{\RomanNumeralCaps{3}}) = 0$&
		$\reflectivity(\spatialVariable \in \domainboundary_{\RomanNumeralCaps{4}}) = 0.99$\\[1em]
		$\distributionboundary(\spatialVariable \in \domainboundary_{\RomanNumeralCaps{1}}, \SC)$ =  $0$ &
		$\distributionboundary(\spatialVariable \in \domainboundary_{\RomanNumeralCaps{2}}, \SC)$ =  $0$ &
		$\distributionboundary(\spatialVariable \in \domainboundary_{\RomanNumeralCaps{3}}, \SC)$ =  $\SC_\x$ &
		$\distributionboundary(\spatialVariable \in \domainboundary_{\RomanNumeralCaps{4}}, \SC)$ =  $0$	
	\end{tabular}
	\caption{Setup for test case 5}
	\label{tab:TestCase5}
\end{table}

In Figure \ref{fig:testCase5} we show the radiative energies of the  discrete ordinates solution and the  $\SecPN[1], \SecPN[3], \SecPN[5]$ solutions.
In Figure \ref{fig:lineplotTestCase5} we compare the radiative energies for different model orders and the anisotropy factors $g=0$ and $g=0.5$ along the line $\{(\x, \y)\in \Domain: y=0.5\}$. We would like to note, that $g=0$ reproduces anisotropic scattering. The non-smootheness in the line plot of the discrete ordinates solution is caused by interpolation of the corresponding piecewise constant function w.r.t. the elements.
In Table \ref{tab:errorsTestCase5} we show the relative $L^2$ distances of the radiative energies of the $\SecPN$ solutions on the coarse grid to the corresponding reference solutions on the refined grid and the discrete ordinates solution.

Referring to our repository on GitHub \cite{githubRepo}, we list the functions used to compute the different approximations of the distribution and radiative energy in Table \ref{tab:codeTestCase5}. 
\begin{table}[h!]
	\centering
	\begin{tabular}{lll}
		wrapper& &  \code{runTestCase5.m}\\
		mesh generation and refinement && \code{genMeshTestCase5.sh}\\
		discrete ordinates method & $\radEnergyDOSolo$ & \code{runDiscreteOrdinatesTestCase5.m}\\
		$\SecPN$ solution & $\radEnergySecPN$ & \code{runTestCase5.py}\\
		transformation $\PN\rightarrow \SecPN$ && \code{generateTestCase5.m}
	\end{tabular}
	\caption{Implementation of test case 5}\label{tab:codeTestCase5}
\end{table}

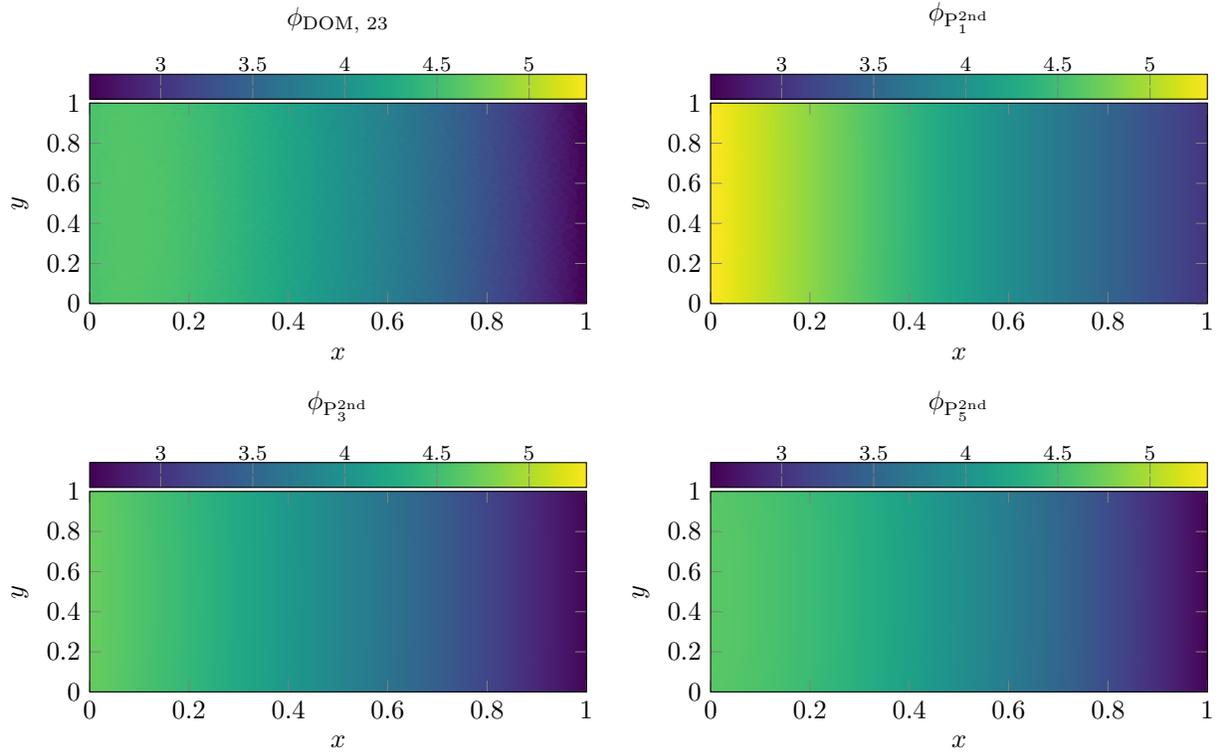
\begin{figure}
	\externaltikz{testCase5}{
		\begin{tikzpicture}
		\begin{groupplot}[
		group style={group size=2 by 2, horizontal sep = 0.1\textwidth,  vertical sep = 2.5cm},
		width = 0.4\textwidth,
		height = 0.12\textheight,
		xlabel={$\x$},
		ylabel={$\y$},
		xmin=0.000000,
		xmax=1.000000,
		ymin=0.000000,
		ymax=1.000000,
		axis on top,
		scale only axis,
		enlargelimits=false,
		colorbar horizontal,
		title style = {yshift = 0.6cm} ,
		scaled x ticks=false,
		colorbar style={,
			at={(0,1.02)},
			anchor=south west,
			height=0.02\textwidth,
			width = 0.4\textwidth,
			xticklabel style={font=\footnotesize,anchor=south, /pgf/number format/.cd, fixed,precision = 3,/tikz/.cd},
			xticklabel shift = -7pt,
			scaled x ticks=false
		},
		point meta min = 2.614784,
		point meta max = 5.311111,
		]
		
		\nextgroupplot[					
		title={$\radEnergyDO[23]$},
		]
		\addplot graphics[xmin=0.000000,xmax=1.000000,ymin=0.000000,ymax=1.000000] {\localpath data/testCase5_DO23_1.png};
		
		\nextgroupplot[					
			title={$\radEnergy_{\SecPN[1]}$},
		]
		\addplot graphics[xmin=0.000000,xmax=1.000000,ymin=0.000000,ymax=1.000000] {\localpath data/testCase5_P1_2ndCoarse_1.png};
		
		\nextgroupplot[					
			title={$\radEnergy_{\SecPN[3]}$},
		]
		\addplot graphics[xmin=0.000000,xmax=1.000000,ymin=0.000000,ymax=1.000000] {\localpath data/testCase5_P3_2ndCoarse_1.png};
		
		\nextgroupplot[					
				title={$\radEnergy_{\SecPN[5]}$},
		]
		\addplot graphics[xmin=0.000000,xmax=1.000000,ymin=0.000000,ymax=1.000000] {\localpath data/testCase5_P5_2ndCoarse_1.png};

		\end{groupplot}
		\end{tikzpicture}
	}
	
	\caption{Test case 5}
	\label{fig:testCase5}
\end{figure}

\begin{table}
	\centering
	\begin{tabular}{ccc}
		$N$& $\frac{ || \radEnergy_{\SecPN}- \radEnergy_{\SecPNRefined} ||_{L^2(\Domain)}   }{|| \radEnergy_{\SecPNRefined}  ||_{L^2(\Domain)} }$  & $\frac{ || \radEnergy_{\SecPN} - \radEnergyDO[23]||_{L^2(\Domain)}  }{|| \radEnergyDO[23] ||_{L^2(\Domain)} }$ \\[1.2em] \hline\\[0.2em]
		1&  3.55e-07&   8.07e-02\\
		3&  1.75e-05&   2.73e-02\\
		5&  4.12e-05&   1.02e-02\\
		7&  7.26e-05&   4.03e-03\\
	\end{tabular}
	\caption{Relative differences test case 5, with $\frac{ || \radEnergyDO[15] - \radEnergyDO[23]||_{L^2(\Domain)}  }{|| \radEnergyDO[23] ||_{L^2(\Domain)} } =$ 4.75e-03}
	\label{tab:errorsTestCase5}
\end{table}

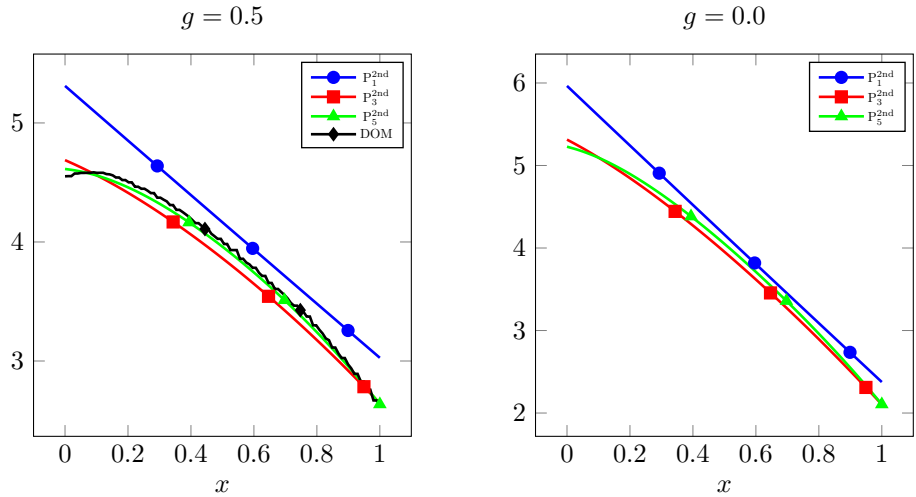
\begin{figure}
	\centering
	\externaltikz{testCase5AlongLine}{
		\begin{tikzpicture}
			\begin{groupplot}[
			group style={group size=2 by 1, horizontal sep = 0.1\textwidth,  vertical sep = 2.5cm},
			width=0.4\textwidth,
			height=0.3\textheight, 
			legend style={nodes={scale=0.5, transform shape},  line width=0.5pt},
			xlabel=$\x$,
			title=radiative energy $\radEnergy$,]
			
			\nextgroupplot[
				title={$g=0.5$}
			]
			\addplot[color=blue, line width=1pt, mark=*, mark repeat=30, mark phase=30] table [x=x, y=N1, col sep=comma] {\pathTestCaseFive/PN2ndAlongLine.csv};
			\addlegendentry{$\SecPN[1]$}	
			
			\addplot[color=red, line width=1pt, mark=square*, mark repeat=30, mark phase=35] table [x=x, y=N3, col sep=comma] {\pathTestCaseFive/PN2ndAlongLine.csv};
			\addlegendentry{$\SecPN[3]$}	
			
			\addplot[color=green, line width=1pt, mark=triangle*, mark repeat=30, mark phase=40] table [x=x, y=N5, col sep=comma] {\pathTestCaseFive/PN2ndAlongLine.csv};
			\addlegendentry{$\SecPN[5]$}	
			
			\addplot[color=black, line width=1pt, mark=diamond*, mark repeat=30, mark phase=45] table [x=x, y=DO, col sep=comma] {\pathTestCaseFive/PN2ndAlongLine.csv};
			\addlegendentry{DOM}
			
			\nextgroupplot[
			title={$g=0.0$}
			]
			\addplot[color=blue, line width=1pt, mark=*, mark repeat=30, mark phase=30] table [x=x, y=N1, col sep=comma] {\pathTestCaseFive/PN2ndIsoAlongLine.csv};
			\addlegendentry{$\SecPN[1]$}	
			
			\addplot[color=red, line width=1pt, mark=square*, mark repeat=30, mark phase=35] table [x=x, y=N3, col sep=comma] {\pathTestCaseFive/PN2ndIsoAlongLine.csv};
			\addlegendentry{$\SecPN[3]$}	
			
			\addplot[color=green, line width=1pt, mark=triangle*, mark repeat=30, mark phase=40] table [x=x, y=N5, col sep=comma] {\pathTestCaseFive/PN2ndIsoAlongLine.csv};
			\addlegendentry{$\SecPN[5]$}	
			
			\end{groupplot}
		\end{tikzpicture}
	}
\caption{Test case 5: Radiative energies along $\{(\x, \y)\in \Domain: y=0.5\}$}
\label{fig:lineplotTestCase5}
\end{figure}

\subsection{Test case 6}
\newcommand{\pathTestCaseSix}{../../codes/build/testCase6}
This test case demonstrates that our method is not limited to rectangular domains and especially can be used with irregular grids.

For the spatial discretization we use a triangular mesh with 2530 nodes and 4847 elements. We refine the mesh by splitting \cite{gmsh} and obtain a mesh with 9906 nodes and 19388 elements for numerical reference solutions of the corresponding models, denoted by  $\SecPNRefined$.
As reference we compute the discrete ordinates solution on the coarse mesh. As before we use for the discretization of the unit sphere a Gaussian-like quadrature rule which is exact for polynomials up to degree 23 and leaves us with 600 ordinates on the upper half sphere.  We compare this solution to the discrete ordinates solution with a quadrature rule exact up to degree 15 with 272 ordinates on the upper half sphere. 

The setup for this test case in 2D is given in Table \ref{tab:TestCase6}.  The domain $\Domain$ and the partition of the boundary $\domainboundary = \domainboundary_{\RomanNumeralCaps{1}} \cup  \domainboundary_{\RomanNumeralCaps{2}} \cup  \domainboundary_{\RomanNumeralCaps{3}} \cup  \domainboundary_{\RomanNumeralCaps{4}}$  is illustrated in Figure \ref{fig:meshTestCase6}.
\begin{table}[h!]
	\centering
	\begin{tabular}{llll}
		$\Domain$: see Figure  \ref{fig:meshTestCase6} &
		$\kernel(\SC, \SC') = \frac{1}{4\pi}$&
		$\absorpcoeff = 0$ &
		$\scattercoeff= \frac{1}{10}$ \\[1em]
		$\reflectivity(\spatialVariable \in  \domainboundary_{\RomanNumeralCaps{1}}) = 0.5$&
		$\reflectivity(\spatialVariable \in \domainboundary_{\RomanNumeralCaps{2}}) = 0$&
		$\reflectivity(\spatialVariable \in \domainboundary_{\RomanNumeralCaps{3}}) = 0.5$&
		$\reflectivity(\spatialVariable \in \domainboundary_{\RomanNumeralCaps{4}}) = 0$\\[1em]
		$\distributionboundary(\spatialVariable \in \domainboundary_{\RomanNumeralCaps{1}})$ =  $0$ &
		$\distributionboundary(\spatialVariable \in \domainboundary_{\RomanNumeralCaps{2}})$ =  $0$ &
		$\distributionboundary(\spatialVariable \in \domainboundary_{\RomanNumeralCaps{3}})$ =  $0$ &
		$\distributionboundary(\spatialVariable \in \domainboundary_{\RomanNumeralCaps{4}})$ =  $1$	
	\end{tabular}
	\caption{Setup for test case 6}
	\label{tab:TestCase6}
\end{table}

In Figure \ref{fig:testCase6} we show the radiative energies of the  discrete ordinates solution and the  $\SecPN[1], \SecPN[3], \SecPN[5]$ solutions.
In Table \ref{tab:errorsTestCase6} we show the relative $L^2$ distances of the radiative energies of the $\SecPN$ solutions on the coarse grid to the corresponding reference solutions on the refined grid and the discrete ordinates solution.

Referring to our repository on GitHub \cite{githubRepo}, we list the functions used to compute the different approximations of the distribution and radiative energy in Table \ref{tab:codeTestCase6}. 
\begin{table}[h!]
	\centering
	\begin{tabular}{lll}
		wrapper& &  \code{runTestCase6.m}\\
		mesh generation and refinement && \code{genMeshTestCase6.sh}\\
		discrete ordinates method & $\radEnergyDOSolo$ & \code{runDiscreteOrdinatesTestCase6.m}\\
		$\SecPN$ solution & $\radEnergySecPN$ & \code{runTestCase6.py}\\
		transformation $\PN\rightarrow \SecPN$ && \code{generateTestCase6.m}
	\end{tabular}
	\caption{Implementation of test case 6}\label{tab:codeTestCase6}
\end{table}

\begin{figure}
		\centering
		\externaltikz{testCase6}{
			\begin{tikzpicture}
			\begin{groupplot}[
			group style={group size=2 by 4, horizontal sep = 0.1\textwidth,  vertical sep = 2.5cm},
			width = 0.2\textheight,
			height = 0.2\textheight,
			xlabel={$\x$},
			ylabel={$\y$},
			xmin=0.000000,
			xmax=1.500000,
			ymin=0.000000,
			ymax=1.500000,
			axis on top,
			scale only axis,
			enlargelimits=false,
			colorbar horizontal,
			title style = {yshift = 0.6cm} ,
			scaled x ticks=false,
			colorbar style={,
				at={(0,1.02)},
				anchor=south west,
				height=0.02\textwidth,
				width = 0.2\textheight,
				xticklabel style={font=\footnotesize,anchor=south, /pgf/number format/.cd, fixed,precision = 3, /tikz/.cd},
				xticklabel shift = -7pt,
				scaled x ticks=false
			},
			point meta min = 0.650569,
			point meta max = 6.368890,
			]
			\nextgroupplot[
				title={$\radEnergyDO[23]$},
				]
				\addplot graphics[xmin=0.000000,xmax=1.500000,ymin=0.000000,ymax=1.500000] {\localpath data/testCase6_DO23_1.png};
			
			\nextgroupplot[
				title={$\radEnergy_{\SecPN[1]}$},
				]
				\addplot graphics[xmin=0.000000,xmax=1.500000,ymin=0.000000,ymax=1.500000] {\localpath data/testCase6_P1_2ndCoarse_1.png};
			
			\nextgroupplot[
				title={$\radEnergy_{\SecPN[3]}$},
				]
				\addplot graphics[xmin=0.000000,xmax=1.500000,ymin=0.000000,ymax=1.500000] {\localpath data/testCase6_P3_2ndCoarse_1.png};
			
			\nextgroupplot[
				title={$\radEnergy_{\SecPN[5]}$},
				]
				\addplot graphics[xmin=0.000000,xmax=1.500000,ymin=0.000000,ymax=1.500000] {\localpath data/testCase6_P5_2ndCoarse_1.png};

			\end{groupplot}
			\end{tikzpicture}
		}
	
		\caption{Test case 6}
		\label{fig:testCase6}
\end{figure}
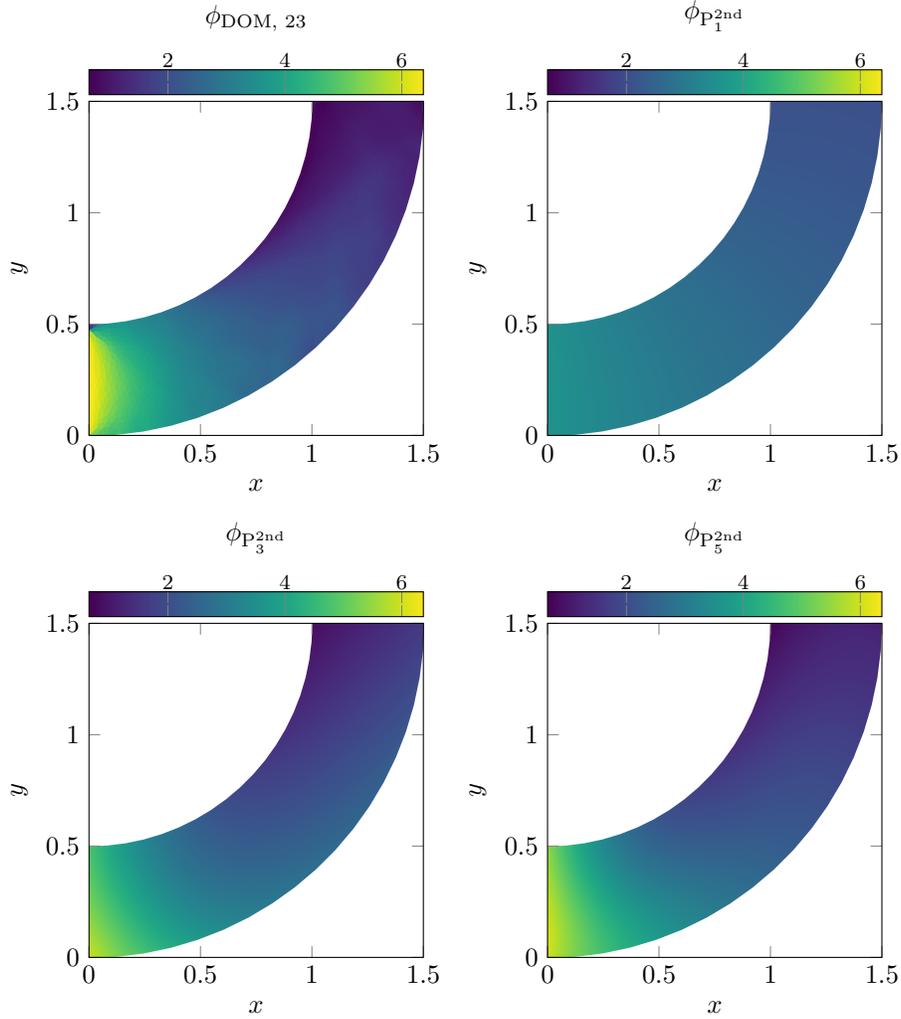

\begin{table}
	\centering
	\begin{tabular}{ccc}
		$N$& $\frac{ || \radEnergy_{\SecPN}- \radEnergy_{\SecPN, \textup{fine}}||_{L^2(\Domain)}   }{||  \radEnergy_{\SecPN, \textup{fine}}  ||_{L^2(\Domain)} }$  & $\frac{ || \radEnergy_{\SecPN} - \radEnergyDO[23]||_{L^2(\Domain)}  }{|| \radEnergyDO[23] ||_{L^2(\Domain)} }$ \\[1.2em] \hline\\[0.2em]
		1&	1.00e-05&	3.34e-01\\
		3&	3.77e-04&	1.43e-01\\
		5&	1.08e-03&	9.53e-02\\
		7&	4.00e-03&	8.48e-02\\
	\end{tabular}
	\caption{Relative differences test case 6, with $\frac{ || \radEnergyDO[15] - \radEnergyDO[23]||_{L^2(\Domain)}  }{|| \radEnergyDO[23] ||_{L^2(\Domain)} } =$ 3.54e-02 }
	\label{tab:errorsTestCase6}
\end{table}

\section{Conclusion} 
\label{sec:Conclusion}

We presented the second-order formulation of the classical $\PN$ equations for the monoenergetic stationary linear transport equation. In contrary to classical $\SPN$ approaches, we reproduce the even moments of the original $\PN$ system. 

We approximate the semi-transparent boundary conditions on the kinetic level by taking half moments at the boundary and obtain Marshak boundary conditions. Taking half moments at the boundary w.r.t. all real spherical harmonics up to the used order $\momentorder$ would yield too many boundary conditions, where our derivation suggests the ``natural'' selection of a subset of those real spherical harmonics  based on the weak formulation. 

The algebraic transformations can be handed to a computer algebra system and the solution of the resulting weak formulation can be delegated to established PDE tools. We demonstrated this workflow with \matlab's symbolic toolbox and \fenics.
Our implementation is not necessarily designed to yield a high performance solution scheme, in fact it should be seen as a proof of concept and easy-to-use tool for fast prototyping.

We demonstrated in six numerical test cases the flexibility and wide applicability of our approach.

Our derivation is based on the assumption, that all algebraic elimination steps are justified. Especially we assume that the solution of the original $\PN$ system is of certain regularity and that all (higher order) derivatives are well defined. 
We proved, under these regularity assumptions on the $\PN$ solutions and mild additional assumptions in Lemmas \ref{lem:kernelassump}, \ref{lem:CooInvertible} and \ref{lem:InvertibleBoundary}, that the resulting second-order formulation is well defined. It would be an interesting task for the future to investigate the well-posedness of the resulting system.

\section*{Acknowledgement}
The authors are grateful for the support of the German Federal Ministry of Education and Research (BMBF) grant
no. 05M16UKE.


\section*{References}

\bibliography{referencesSPN.bib}

\begin{thebibliography}{10}
\expandafter\ifx\csname url\endcsname\relax
  \def\url#1{\texttt{#1}}\fi
\expandafter\ifx\csname urlprefix\endcsname\relax\def\urlprefix{URL }\fi
\expandafter\ifx\csname href\endcsname\relax
  \def\href#1#2{#2} \def\path#1{#1}\fi

\bibitem{hubner2017validation}
F.~H{\"u}bner, C.~Leith{\"a}user, B.~Bazrafshan, N.~Siedow, T.~J. Vogl,
  \href{https://doi.org/10.1007/s10103-017-2260-4}{Validation of a mathematical
  model for laser-induced thermotherapy in liver tissue}, Lasers in Medical
  Science 32~(6) (2017) 1399--1409.
\newblock \href {http://dx.doi.org/10.1007/s10103-017-2260-4}
  {\path{doi:10.1007/s10103-017-2260-4}}.
\newline\urlprefix\url{https://doi.org/10.1007/s10103-017-2260-4}

\bibitem{pinnau2004optimal}
R.~Pinnau, G.~Th{\"o}mmes, Optimal boundary control of glass cooling processes,
  Mathematical methods in the applied sciences 27~(11) (2004) 1261--1281.

\bibitem{brunner2002forms}
T.~A. Brunner, Forms of approximate radiation transport, Sandia report.

\bibitem{seibold2014starmap}
B.~Seibold, M.~Frank, \href{http://doi.acm.org/10.1145/2590808}{Starmap---a
  second order staggered grid method for spherical harmonics moment equations
  of radiative transfer}, ACM Transactions on Mathematical Software 41~(1)
  (2014) 4:1--4:28.
\newblock \href {http://dx.doi.org/10.1145/2590808}
  {\path{doi:10.1145/2590808}}.
\newline\urlprefix\url{http://doi.acm.org/10.1145/2590808}

\bibitem{ge2015implementation}
W.~Ge, R.~Marquez, M.~F. Modest, S.~P. Roy, Implementation of high-order
  spherical harmonics methods for radiative heat transfer on openfoam, Journal
  of Heat Transfer 137~(5) (2015) 052701.

\bibitem{tencer2013error}
J.~T. Tencer,
  \href{https://repositories.lib.utexas.edu/handle/2152/23247}{Error analysis
  for radiation transport}, dissertation, The University of Texas at Austin
  (2013).
\newline\urlprefix\url{https://repositories.lib.utexas.edu/handle/2152/23247}

\bibitem{mcclarren2010theoretical}
R.~G. McClarren,
  \href{https://doi.org/10.1080/00411450.2010.535088}{Theoretical aspects of
  the simplified {$P_n$} equations}, Transport Theory and Statistical Physics
  39~(2-4) (2010) 73--109.
\newblock \href {http://dx.doi.org/10.1080/00411450.2010.535088}
  {\path{doi:10.1080/00411450.2010.535088}}.
\newline\urlprefix\url{https://doi.org/10.1080/00411450.2010.535088}

\bibitem{modest2008elliptic}
M.~F. Modest, J.~Yang,
  \href{http://www.sciencedirect.com/science/article/pii/S0022407307003676}{Elliptic
  pde formulation and boundary conditions of the spherical harmonics method of
  arbitrary order for general three-dimensional geometries}, Journal of
  Quantitative Spectroscopy and Radiative Transfer 109~(9) (2008) 1641 -- 1666.
\newblock \href {http://dx.doi.org/https://doi.org/10.1016/j.jqsrt.2007.12.018}
  {\path{doi:https://doi.org/10.1016/j.jqsrt.2007.12.018}}.
\newline\urlprefix\url{http://www.sciencedirect.com/science/article/pii/S0022407307003676}

\bibitem{modest2012further}
M.~F. Modest, \href{https://doi.org/10.1080/10407790.2012.702645}{Further
  development of the elliptic pde formulation of the p n approximation and its
  marshak boundary conditions}, Numerical Heat Transfer, Part B: Fundamentals
  62~(2-3) (2012) 181--202.
\newblock \href {http://dx.doi.org/10.1080/10407790.2012.702645}
  {\path{doi:10.1080/10407790.2012.702645}}.
\newline\urlprefix\url{https://doi.org/10.1080/10407790.2012.702645}

\bibitem{ge2017high}
W.~Ge, High-order spherical harmonics methods for radiative heat transfer and
  applications in combustion simulations, Ph.D. thesis, UC Merced (2017).

\bibitem{klose2006light}
A.~D. Klose, E.~W. Larsen,
  \href{http://www.sciencedirect.com/science/article/pii/S0021999106003421}{Light
  transport in biological tissue based on the simplified spherical harmonics
  equations}, Journal of Computational Physics 220~(1) (2006) 441 -- 470.
\newblock \href {http://dx.doi.org/https://doi.org/10.1016/j.jcp.2006.07.007}
  {\path{doi:https://doi.org/10.1016/j.jcp.2006.07.007}}.
\newline\urlprefix\url{http://www.sciencedirect.com/science/article/pii/S0021999106003421}

\bibitem{hamilton2016efficient}
S.~P. Hamilton, T.~M. Evans,
  \href{http://www.sciencedirect.com/science/article/pii/S0021999114008225}{Efficient
  solution of the simplified {$P_N$} equations}, Journal of Computational
  Physics 284 (2015) 155 -- 170.
\newblock \href {http://dx.doi.org/https://doi.org/10.1016/j.jcp.2014.12.014}
  {\path{doi:https://doi.org/10.1016/j.jcp.2014.12.014}}.
\newline\urlprefix\url{http://www.sciencedirect.com/science/article/pii/S0021999114008225}

\bibitem{modest2014elliptic}
M.~F. Modest, J.~Cai, W.~Ge, E.~Lee,
  \href{http://www.sciencedirect.com/science/article/pii/S0017931014003433}{Elliptic
  formulation of the simplified spherical harmonics method in radiative heat
  transfer}, International Journal of Heat and Mass Transfer 76 (2014) 459 --
  466.
\newblock \href
  {http://dx.doi.org/https://doi.org/10.1016/j.ijheatmasstransfer.2014.04.038}
  {\path{doi:https://doi.org/10.1016/j.ijheatmasstransfer.2014.04.038}}.
\newline\urlprefix\url{http://www.sciencedirect.com/science/article/pii/S0017931014003433}

\bibitem{zhang2013iterative}
Y.~Zhang, J.~C. Ragusa, J.~E. Morel,
  \href{http://www.sciencedirect.com/science/article/pii/S0021999113004336}{Iterative
  performance of various formulations of the s{$P_N$} equations}, Journal of
  Computational Physics 252 (2013) 558 -- 572.
\newblock \href {http://dx.doi.org/https://doi.org/10.1016/j.jcp.2013.06.009}
  {\path{doi:https://doi.org/10.1016/j.jcp.2013.06.009}}.
\newline\urlprefix\url{http://www.sciencedirect.com/science/article/pii/S0021999113004336}

\bibitem{liu2010evaluation}
K.~Liu, Y.~Lu, J.~Tian, C.~Qin, X.~Yang, S.~Zhu, X.~Yang, Q.~Gao, D.~Han,
  \href{http://www.opticsexpress.org/abstract.cfm?URI=oe-18-20-20988}{Evaluation
  of the simplified spherical harmonics approximation in bioluminescence
  tomography through heterogeneous mouse models}, Opt. Express 18~(20) (2010)
  20988--21002.
\newblock \href {http://dx.doi.org/10.1364/OE.18.020988}
  {\path{doi:10.1364/OE.18.020988}}.
\newline\urlprefix\url{http://www.opticsexpress.org/abstract.cfm?URI=oe-18-20-20988}

\bibitem{pu2017mathematical}
C.~Pu, R.~G. McClarren,
  \href{https://doi.org/10.1080/23324309.2017.1352516}{Mathematical and
  numerical validation of the simplified spherical harmonics approach for
  time-dependent anisotropic-scattering transport problems in homogeneous
  media}, Journal of Computational and Theoretical Transport 46~(5) (2017)
  366--378.
\newblock \href {http://dx.doi.org/10.1080/23324309.2017.1352516}
  {\path{doi:10.1080/23324309.2017.1352516}}.
\newline\urlprefix\url{https://doi.org/10.1080/23324309.2017.1352516}

\bibitem{matlab}
MATLAB, \href{https://de.mathworks.com/}{Matlab and symbolic toolbox release
  2018b} (2018).
\newline\urlprefix\url{https://de.mathworks.com/}

\bibitem{alnaes2015fenics}
M.~Aln{\ae}s, J.~Blechta, J.~Hake, A.~Johansson, B.~Kehlet, A.~Logg,
  C.~Richardson, J.~Ring, M.~E. Rognes, G.~N. Wells,
  \href{https://fenicsproject.org/}{The fenics project version 1.5}, Archive of
  Numerical Software 3~(100) (2015) 9--23.
\newline\urlprefix\url{https://fenicsproject.org/}

\bibitem{logg2012automated}
A.~Logg, K.-A. Mardal, G.~Wells, Automated Solution of Differential Equations
  by the Finite Element Method: The FEniCS Book, Vol.~84, Springer Science \&
  Business Media, 2012.

\bibitem{leveque2009python}
R.~J. {LeVeque}, Python tools for reproducible research on hyperbolic problems,
  Computing in Science Engineering 11~(1) (2009) 19--27.
\newblock \href {http://dx.doi.org/10.1109/MCSE.2009.13}
  {\path{doi:10.1109/MCSE.2009.13}}.

\bibitem{wilkinson2016fair}
M.~D. Wilkinson, M.~Dumontier, I.~J. Aalbersberg, G.~Appleton, M.~Axton,
  A.~Baak, N.~Blomberg, J.-W. Boiten, L.~B. da~Silva~Santos, P.~E. Bourne,
  et~al., The fair guiding principles for scientific data management and
  stewardship, Scientific data 3.

\bibitem{githubRepo}
M.~Andres, F.~Schneider, Github repository for this project,
  \url{https://github.com/andresmatthias/pn2nd.git}, contains MATLAB and Python
  codes related to this article to reproduce the numerical results. Accessed:
  2019-08-01.

\bibitem{cercignani1988boltzmann}
C.~Cercignani, \href{https://kplus.ub.uni-kl.de/Record/KLU01-000183612}{The
  Boltzmann Equation and its Applications}, Applied mathematical sciences ; 67,
  Springer, Berlin u.a., 1988.
\newline\urlprefix\url{https://kplus.ub.uni-kl.de/Record/KLU01-000183612}

\bibitem{park2016entropy}
S.~J. Park, S.-B. Yun,
  \href{http://www.sciencedirect.com/science/article/pii/S089396591630043X}{Entropy
  production estimates for the polyatomic ellipsoidal bgk model}, Applied
  Mathematics Letters 58 (2016) 26 -- 33.
\newblock \href {http://dx.doi.org/https://doi.org/10.1016/j.aml.2016.01.021}
  {\path{doi:https://doi.org/10.1016/j.aml.2016.01.021}}.
\newline\urlprefix\url{http://www.sciencedirect.com/science/article/pii/S089396591630043X}

\bibitem{lev1996moment}
C.~D. Levermore, \href{https://doi.org/10.1007/BF02179552}{Moment closure
  hierarchies for kinetic theories}, Journal of Statistical Physics 83~(5)
  (1996) 1021--1065.
\newblock \href {http://dx.doi.org/10.1007/BF02179552}
  {\path{doi:10.1007/BF02179552}}.
\newline\urlprefix\url{https://doi.org/10.1007/BF02179552}

\bibitem{henyey1941diffuse}
L.~G. Henyey, J.~L. Greenstein, Diffuse radiation in the galaxy, Astrophysical
  Journal 93 (1941) 70--83.
\newblock \href {http://dx.doi.org/10.1086/144246} {\path{doi:10.1086/144246}}.

\bibitem{larsen2002simplified}
E.~W. Larsen, G.~Th{\"o}mmes, A.~Klar, M.~Sea{\"i}d, T.~G{\"o}tz,
  \href{http://www.sciencedirect.com/science/article/pii/S0021999102972104}{Simplified
  {$P_N$} approximations to the equations of radiative heat transfer and
  applications}, Journal of Computational Physics 183~(2) (2002) 652 -- 675.
\newblock \href {http://dx.doi.org/https://doi.org/10.1006/jcph.2002.7210}
  {\path{doi:https://doi.org/10.1006/jcph.2002.7210}}.
\newline\urlprefix\url{http://www.sciencedirect.com/science/article/pii/S0021999102972104}

\bibitem{olbrant2010generalized}
E.~Olbrant, M.~Frank,
  \href{http://www.ncbi.nlm.nih.gov/pubmed/20924856}{Generalized fokker-planck
  theory for electron and photon transport in biological tissues: Application
  to radiotherapy}, Computational and mathematical methods in medicine 11~(4)
  (2010) 313--39.
\newblock \href {http://dx.doi.org/10.1080/1748670X.2010.491828}
  {\path{doi:10.1080/1748670X.2010.491828}}.
\newline\urlprefix\url{http://www.ncbi.nlm.nih.gov/pubmed/20924856}

\bibitem{dubroca2010angular}
B.~Dubroca, J.-L. Feugeas, M.~Frank,
  \href{http://www.springerlink.com/index/10.1140/epjd/e2010-00190-8}{Angular
  moment model for the fokker-planck equation}, The European Physical Journal D
  60~(2) (2010) 301--307.
\newblock \href {http://dx.doi.org/10.1140/epjd/e2010-00190-8}
  {\path{doi:10.1140/epjd/e2010-00190-8}}.
\newline\urlprefix\url{http://www.springerlink.com/index/10.1140/epjd/e2010-00190-8}

\bibitem{schneider2016moment}
F.~Schneider, \href{https://kplus.ub.uni-kl.de/Record/KLU01-001010407}{Moment
  models in radiation transport equations}, Ph.D. thesis, TU Kaiserslautern,
  München (2016).
\newline\urlprefix\url{https://kplus.ub.uni-kl.de/Record/KLU01-001010407}

\bibitem{brunner2005twodimensional}
T.~A. Brunner, J.~P. Holloway,
  \href{http://www.sciencedirect.com/science/article/pii/S0021999105002275}{Two-dimensional
  time dependent riemann solvers for neutron transport}, Journal of
  Computational Physics 210~(1) (2005) 386 -- 399.
\newblock \href {http://dx.doi.org/https://doi.org/10.1016/j.jcp.2005.04.011}
  {\path{doi:https://doi.org/10.1016/j.jcp.2005.04.011}}.
\newline\urlprefix\url{http://www.sciencedirect.com/science/article/pii/S0021999105002275}

\bibitem{blanco1997evaluation}
M.~A. Blanco, M.~Flórez, M.~Bermejo,
  \href{http://www.sciencedirect.com/science/article/pii/S0166128097001851}{Evaluation
  of the rotation matrices in the basis of real spherical harmonics}, Journal
  of Molecular Structure: THEOCHEM 419~(1) (1997) 19 -- 27.
\newblock \href
  {http://dx.doi.org/https://doi.org/10.1016/S0166-1280(97)00185-1}
  {\path{doi:https://doi.org/10.1016/S0166-1280(97)00185-1}}.
\newline\urlprefix\url{http://www.sciencedirect.com/science/article/pii/S0166128097001851}

\bibitem{courant2008methods}
R.~Courant, D.~Hilbert,
  \href{https://archive.org/details/MethodsOfMathematicalPhysicsVolume1}{Methods
  of Mathematical Physics}, no. Bd. 1 in Methods of Mathematical Physics,
  Wiley, 2008.
\newline\urlprefix\url{https://archive.org/details/MethodsOfMathematicalPhysicsVolume1}

\bibitem{Eddington}
A.~S. Eddington, The Internal Constitution of the Stars, Dover, 1926.

\bibitem{lewis1984computational}
E.~E. Lewis, J.~{W. F. Miller}, Computational Methods in Neutron Transport,
  John Wiley and Sons, New York, 1984.

\bibitem{Tadmor1998}
E.~Tadmor,
  \href{http://link.springer.com/chapter/10.1007/BFb0096352}{Approximate
  Solutions of Nonlinear Conservation Laws}, Springer, 1998.
\newline\urlprefix\url{http://link.springer.com/chapter/10.1007/BFb0096352}

\bibitem{Shu1998}
C.-W. Shu,
  \href{http://link.springer.com/chapter/10.1007/BFb0096355}{Essentially
  Non-Oscillatory and Weighted Essentially Non-Oscillatory Schemes for
  Hyperbolic Conservation Laws}, Springer, 1998.
\newline\urlprefix\url{http://link.springer.com/chapter/10.1007/BFb0096355}

\bibitem{garrett2014comparison}
C.~K. Garrett, C.~D. Hauck,
  \href{http://www.tandfonline.com/doi/abs/10.1080/00411450.2014.910226}{A
  comparison of moment closures for linear kinetic transport equations: The
  line source benchmark}, Transport Theory and Statistical Physics.
\newline\urlprefix\url{http://www.tandfonline.com/doi/abs/10.1080/00411450.2014.910226}

\bibitem{chidyagwai2018comparative}
P.~Chidyagwai, M.~Frank, F.~Schneider, B.~Seibold,
  \href{http://www.sciencedirect.com/science/article/pii/S0377042718301857}{A
  comparative study of limiting strategies in discontinuous galerkin schemes
  for the m1 model of radiation transport}, Journal of Computational and
  Applied Mathematics 342 (2018) 399 -- 418.
\newblock \href {http://dx.doi.org/https://doi.org/10.1016/j.cam.2018.04.017}
  {\path{doi:https://doi.org/10.1016/j.cam.2018.04.017}}.
\newline\urlprefix\url{http://www.sciencedirect.com/science/article/pii/S0377042718301857}

\bibitem{levermore1984relating}
C.~D. Levermore,
  \href{http://www.sciencedirect.com/science/article/pii/0022407384901122}{Relating
  eddington factors to flux limiters}, Journal of Quantitative Spectroscopy and
  Radiative {\ldots} 31~(2) (1984) 149--160.
\newline\urlprefix\url{http://www.sciencedirect.com/science/article/pii/0022407384901122}

\bibitem{mead1984maximum}
L.~R. Mead, N.~Papanicolaou,
  \href{http://link.aip.org/link/JMAPAQ/v25/i8/p2404/s1{\&}Agg=doi}{Maximum
  entropy in the problem of moments}, Journal of Mathematical Physics 25~(8)
  (1984) 2404.
\newblock \href {http://dx.doi.org/10.1063/1.526446}
  {\path{doi:10.1063/1.526446}}.
\newline\urlprefix\url{http://link.aip.org/link/JMAPAQ/v25/i8/p2404/s1{\&}Agg=doi}

\bibitem{cernohorsky1994maximum}
J.~Cernohorsky, S.~Bludman,
  \href{http://adsabs.harvard.edu/full/1994ApJ...433..250C}{Maximum entropy
  distribution and closure for bose-einstein and fermi-dirac radiation
  transport}, The Astrophysical Journal.
\newline\urlprefix\url{http://adsabs.harvard.edu/full/1994ApJ...433..250C}

\bibitem{dubroca1999entropic}
B.~Dubroca, J.-L. Feugeas, Entropic moment closure hierarchy for the radiative
  transfer equation, C. R. Acad. Sci. Paris Ser. I 329 (1999) 915--920.

\bibitem{junk2000maximum}
M.~Junk, Maximum entropy for reduced moment problems, Math. Meth. Mod. Appl.
  Sci. 10 (2000) 1001--1025.

\bibitem{minerbo78maximum}
G.~N. Minerbo, Maximum entropy eddington factors, J. Quant. Spectrosc. Radiat.
  Transfer 20 (1978) 541--545.

\bibitem{brunner2001onedimensional}
T.~A. Brunner, J.~P. Holloway,
  \href{http://www.sciencedirect.com/science/article/pii/S0022407300000996
  http://linkinghub.elsevier.com/retrieve/pii/S0022407300000996}{One-dimensional
  riemann solvers and the maximum entropy closure}, Journal of Quantitative
  Spectroscopy and Radiative Transfer 69~(5) (2001) 543--566.
\newblock \href {http://dx.doi.org/10.1016/S0022-4073(00)00099-6}
  {\path{doi:10.1016/S0022-4073(00)00099-6}}.
\newline\urlprefix\url{http://www.sciencedirect.com/science/article/pii/S0022407300000996
  http://linkinghub.elsevier.com/retrieve/pii/S0022407300000996}

\bibitem{olbrant2012realizability}
E.~Olbrant, C.~D. Hauck, M.~Frank,
  \href{http://linkinghub.elsevier.com/retrieve/pii/S0021999112001362}{A
  realizability-preserving discontinuous galerkin method for the m1 model of
  radiative transfer}, Journal of Computational Physics 231~(17) (2012)
  5612--5639.
\newblock \href {http://dx.doi.org/10.1016/j.jcp.2012.03.002}
  {\path{doi:10.1016/j.jcp.2012.03.002}}.
\newline\urlprefix\url{http://linkinghub.elsevier.com/retrieve/pii/S0021999112001362}

\bibitem{hauck2010high}
C.~D. Hauck,
  \href{http://www.ki-net.umd.edu/pubs/files/FRG-2010-Hauck-Cory.entropy{\_}kinetic.pdf}{High-order
  entropy-based closures for linear transport in slab geometry}, Commun. Math.
  Sci. v9.
\newline\urlprefix\url{http://www.ki-net.umd.edu/pubs/files/FRG-2010-Hauck-Cory.entropy{\_}kinetic.pdf}

\bibitem{alldredge2012high}
G.~W. Alldredge, C.~D. Hauck, A.~L. Tits,
  \href{http://epubs.siam.org/doi/abs/10.1137/11084772X}{High-order
  entropy-based closures for linear transport in slab geometry ii: A
  computational study of the optimization problem}, SIAM Journal on Scientific
  Computing 34~(4) (2012) B361--B391.
\newblock \href {http://dx.doi.org/10.1137/11084772X}
  {\path{doi:10.1137/11084772X}}.
\newline\urlprefix\url{http://epubs.siam.org/doi/abs/10.1137/11084772X}

\bibitem{schneider2019firstorder}
F.~Schneider, T.~Leibner, First-order continuous- and discontinuous-galerkin
  moment models for a linear kinetic equation: Realizability-preserving
  splitting scheme and numerical analysis (2019).
\newblock \href {http://arxiv.org/abs/1904.03098} {\path{arXiv:1904.03098}}.

\bibitem{frank2006partial}
M.~Frank, B.~Dubroca, A.~Klar,
  \href{http://www.sciencedirect.com/science/article/pii/S002199910600057X}{Partial
  moment entropy approximation to radiative heat transfer}, Journal of
  Computational Physics 218~(1) (2006) 1--18.
\newline\urlprefix\url{http://www.sciencedirect.com/science/article/pii/S002199910600057X}

\bibitem{dubroca2002half}
B.~Dubroca, A.~Klar,
  \href{http://www.sciencedirect.com/science/article/pii/S0021999102971068}{Half-moment
  closure for radiative transfer equations}, Journal of Computational Physics
  180 (2002) 584--596.
\newline\urlprefix\url{http://www.sciencedirect.com/science/article/pii/S0021999102971068}

\bibitem{schneider2014higher}
F.~Schneider, G.~W. Alldredge, M.~Frank, A.~Klar,
  \href{http://epubs.siam.org/doi/abs/10.1137/130934210}{Higher order
  mixed-moment approximations for the fokker--planck equation in one space
  dimension}, SIAM Journal on Applied Mathematics 74~(4) (2014) 1087--1114.
\newblock \href {http://arxiv.org/abs/1405.5305} {\path{arXiv:1405.5305}},
  \href {http://dx.doi.org/10.1137/130934210} {\path{doi:10.1137/130934210}}.
\newline\urlprefix\url{http://epubs.siam.org/doi/abs/10.1137/130934210}

\bibitem{ritter2016partial}
J.~Ritter, A.~Klar, F.~Schneider,
  \href{http://arxiv.org/abs/1601.04482}{Partial-moment minimum-entropy models
  for kinetic chemotaxis equations in one and two dimensions}, Journal of
  Computational and Applied Mathematics 306 (2016) 300--315.
\newblock \href {http://arxiv.org/abs/1601.04482} {\path{arXiv:1601.04482}},
  \href {http://dx.doi.org/10.1016/j.cam.2016.04.019}
  {\path{doi:10.1016/j.cam.2016.04.019}}.
\newline\urlprefix\url{http://arxiv.org/abs/1601.04482}

\bibitem{schneider2017first}
F.~Schneider, J.~Kall, A.~Roth,
  \href{http://aimsciences.org//article/id/92206f8c-b216-478b-a566-d72a803c8ab9}{First-order
  quarter- and mixed-moment realizability theory and kershaw closures for a
  fokker-planck equation in two space dimensions}, Kinetic and Related Models
  10.
\newblock \href {http://dx.doi.org/10.3934/krm.2017044}
  {\path{doi:10.3934/krm.2017044}}.
\newline\urlprefix\url{http://aimsciences.org//article/id/92206f8c-b216-478b-a566-d72a803c8ab9}

\bibitem{kershaw76flux}
D.~S. Kershaw,
  \href{http://www.osti.gov/bridge/product.biblio.jsp?osti{\_}id=104974}{Flux
  limiting nature's own way: A new method for numerical solution of the
  transport equation}, Tech. rep., LLNL Report UCRL-78378 (jul 1976).
\newblock \href {http://dx.doi.org/10.2172/104974} {\path{doi:10.2172/104974}}.
\newline\urlprefix\url{http://www.osti.gov/bridge/product.biblio.jsp?osti{\_}id=104974}

\bibitem{monreal2012moment}
P.~Monreal, Moment realizability and kershaw closures in radiative transfer,
  Ph.D. thesis, TU Aachen (2012).

\bibitem{schneider2015kershaw}
F.~Schneider, \href{http://arxiv.org/abs/1511.02714}{Kershaw closures for
  linear transport equations in slab geometry i: Model derivation}, Journal of
  Computational Physics 322 (2016) 905--919.
\newblock \href {http://arxiv.org/abs/1511.02714} {\path{arXiv:1511.02714}},
  \href {http://dx.doi.org/10.1016/j.jcp.2016.02.080}
  {\path{doi:10.1016/j.jcp.2016.02.080}}.
\newline\urlprefix\url{http://arxiv.org/abs/1511.02714}

\bibitem{schneider2016kershaw}
F.~Schneider, \href{http://arxiv.org/abs/1602.02590}{Kershaw closures for
  linear transport equations in slab geometry ii: high-order
  realizability-preserving discontinuous-galerkin schemes}, Journal of
  Computational Physics 322 (2016) 920--935.
\newblock \href {http://arxiv.org/abs/1602.02590} {\path{arXiv:1602.02590}},
  \href {http://dx.doi.org/10.1016/j.jcp.2016.07.014}
  {\path{doi:10.1016/j.jcp.2016.07.014}}.
\newline\urlprefix\url{http://arxiv.org/abs/1602.02590}

\bibitem{dEon2016hitchhiker}
E.~d'Eon, \href{http://www.eugenedeon.com/hitchhikers}{A hitchhiker's guide to
  multiple scattering}, Tech. rep., Technical report (2016).
\newline\urlprefix\url{http://www.eugenedeon.com/hitchhikers}

\bibitem{chandrasekhar2013radiative}
S.~Chandrasekhar, Radiative Transfer, Courier Corporation, 2013.

\bibitem{kagiwada1967multiple}
H.~Kagiwada, R.~Kalaba, Multiple anisotropic scattering in slabs with axially
  symmetric fields, Tech. rep., RAND CORP SANTA MONICA CALIF (1967).

\bibitem{gkioulekas2013understanding}
I.~Gkioulekas, B.~Xiao, S.~Zhao, E.~H. Adelson, T.~Zickler, K.~Bala,
  \href{http://doi.acm.org/10.1145/2516971.2516972}{Understanding the role of
  phase function in translucent appearance}, ACM Trans. Graph. 32~(5) (2013)
  147:1--147:19.
\newblock \href {http://dx.doi.org/10.1145/2516971.2516972}
  {\path{doi:10.1145/2516971.2516972}}.
\newline\urlprefix\url{http://doi.acm.org/10.1145/2516971.2516972}

\bibitem{toro2009riemann}
E.~F. Toro, \href{https://kplus.ub.uni-kl.de/Record/KLU01-000747419}{Riemann
  Solvers and Numerical Methods for Fluid Dynamics}, 3rd Edition, Springer,
  Dordrecht u.a., 2009.
\newline\urlprefix\url{https://kplus.ub.uni-kl.de/Record/KLU01-000747419}

\bibitem{pomraning1964variational}
G.~C. Pomraning,
  \href{http://www.sciencedirect.com/science/article/pii/0003491664901058}{Variational
  boundary conditions for the spherical harmonics approximation to the neutron
  transport equation}, Annals of Physics 27~(2) (1964) 193 -- 215.
\newblock \href
  {http://dx.doi.org/https://doi.org/10.1016/0003-4916(64)90105-8}
  {\path{doi:https://doi.org/10.1016/0003-4916(64)90105-8}}.
\newline\urlprefix\url{http://www.sciencedirect.com/science/article/pii/0003491664901058}

\bibitem{larsen1991thepn}
E.~W. Larsen, G.~C. Pomraning, \href{http://epubs.ans.org/?a=23844}{{The PN
  Theory as an Asymptotic Limit of Transport Theory in Planar Geometry —I:
  Analysis}}, Nuclear Science and Engineering 109~(1) (1991) 49--75.
\newline\urlprefix\url{http://epubs.ans.org/?a=23844}

\bibitem{rulko1991thepn}
R.~P. Rulko, E.~W. Larsen, G.~C. Pomraning,
  \href{http://epubs.ans.org/?a=23845}{{The PN Theory as an Asymptotic Limit of
  Transport Theory in Planar Geometry —II: Numerical Results}}, Nuclear
  Science and Engineering 109~(1) (1991) 76--85.
\newline\urlprefix\url{http://epubs.ans.org/?a=23845}

\bibitem{Struchtrup2000}
H.~Struchtrup, \href{http://link.springer.com/10.1007/s000330050002}{Kinetic
  schemes and boundary conditions for moment equations}, Zeitschrift f{\"{u}}r
  angewandte Mathematik und Physik 51~(3) (2000) 346.
\newblock \href {http://dx.doi.org/10.1007/s000330050002}
  {\path{doi:10.1007/s000330050002}}.
\newline\urlprefix\url{http://link.springer.com/10.1007/s000330050002}

\bibitem{levermore2009boundary}
C.~D. Levermore, Boundary conditions for moment closures, Institute for Pure
  and Applied Mathematics University of California, Los Angeles, CA on May 27.

\bibitem{hamilton2015efficient}
S.~P. Hamilton, T.~M. Evans,
  \href{http://www.sciencedirect.com/science/article/pii/S0021999114008225}{Efficient
  solution of the simplified pn equations}, Journal of Computational Physics
  284 (2015) 155 -- 170.
\newblock \href {http://dx.doi.org/https://doi.org/10.1016/j.jcp.2014.12.014}
  {\path{doi:https://doi.org/10.1016/j.jcp.2014.12.014}}.
\newline\urlprefix\url{http://www.sciencedirect.com/science/article/pii/S0021999114008225}

\bibitem{fies2012trigonometric}
G.~Da~Fies, M.~Vianello, Trigonometric gaussian quadrature on subintervals of
  the period, Electronic Transactions on Numerical Analysis 39 (2012) 102--112.

\bibitem{trigaussRepo}
G.~Da~Fies, A.~Sommariva, V.~M., Subp: Matlab package for subperiodic
  trigonometric quadrature and multivariate applications,
  \url{https://www.math.unipd.it/~marcov/mysoft/subp/trigauss.m}, contains
  codes for product Gaussian quadrature on circular and spherical sections.

\bibitem{larsen2010advances}
E.~W. Larsen, J.~E. Morel, Advances in discrete-ordinates methodology, in:
  Nuclear Computational Science, Springer, 2010, pp. 1--84.

\bibitem{carfora2007interpolation}
M.~F. Carfora,
  \href{http://www.sciencedirect.com/science/article/pii/S0377042706006522}{Interpolation
  on spherical geodesic grids: A comparative study}, Journal of Computational
  and Applied Mathematics 210~(1) (2007) 99 -- 105, proceedings of the
  Numerical Analysis Conference 2005.
\newblock \href {http://dx.doi.org/https://doi.org/10.1016/j.cam.2006.10.068}
  {\path{doi:https://doi.org/10.1016/j.cam.2006.10.068}}.
\newline\urlprefix\url{http://www.sciencedirect.com/science/article/pii/S0377042706006522}

\bibitem{python}
T.~P.~S. Foundation, \href{https://www.python.org/}{Python}.
\newline\urlprefix\url{https://www.python.org/}

\bibitem{oliphant2006guide}
T.~E. Oliphant, \href{https://numpy.org/}{A Guide to {NumPy}}, Vol.~1, Trelgol
  Publishing USA, 2006.
\newline\urlprefix\url{https://numpy.org/}

\bibitem{scipy}
E.~Jones, T.~Oliphant, P.~Peterson, et~al.,
  \href{http://www.scipy.org/}{{SciPy}: Open source scientific tools for
  {Python}} (2001--).
\newline\urlprefix\url{http://www.scipy.org/}

\bibitem{gmsh}
C.~Geuzaine, J.-F. Remacle, \href{http://www.gmsh.info/}{Gmsh: a
  three-dimensional finite element mesh generator with built-in pre- and
  post-processing facilities}, International Journal for Numerical Methods in
  Engineering 79 (2009) 1309--1331.
\newline\urlprefix\url{http://www.gmsh.info/}

\end{thebibliography}

\end{document}